\newcommand{\nopicture}[1]{}
\newcommand{\fk}{\mathfrak{k}}
\newcommand{\fs}{\mathfrak{s}}
\newcommand{\fg}{\mathfrak{g}}
\newcommand{\fp}{\mathfrak{p}}
\newcommand{\fn}{\mathfrak{n}}
\newcommand{\gl}{\mathfrak{gl}}
\newcommand{\fb}{\mathfrak{b}}
\newcommand{\RS}{\mathrm{RS}}
\newcommand{\shape}{\mathrm{shape}}
\newcommand{\genRS}{\mathrm{gRS}}
\newcommand{\card}[1]{\# #1}
\newcommand{\rank}{\mathrm{rank}}
\newcommand{\Span}[1]{\langle #1\rangle}
\newcommand{\nrboxes}[2]{\# #1_{\leq #2}}
\newcommand{\nrplus}[2]{\# #1_{\leq #2}(+)}
\newcommand{\nrminus}[2]{\# #1_{\leq #2}(-)}
\newcommand{\nil}{\mathfrak{nil}}
\newcommand{\Mat}{\mathrm{M}}
\newcommand{\subm}[2]{(#1)_{#2}}
\newcommand{\flag}{\mathcal{F}}
\newcommand{\conormal}{\mathcal{Y}}
\newcommand{\Xorbit}{\mathbb{O}}
\newcommand{\Gorbit}{\mathcal{O}}
\newcommand{\Nkorbit}{\mathfrak{O}}
\newcommand{\Nsorbit}{\mathfrak{O}}
\newcommand{\nilpotentsof}[1]{\mathcal{N}_{#1}}
\newcommand{\condir}{\mathcal{D}}
\newcommand{\intp}{\overline{p}}
\newcommand{\intq}{\overline{q}}
\newcommand{\partitionsof}[1]{\mathcal{P}(#1)}
\newcommand{\permutationsof}[1]{\mathfrak{S}_{#1}}
\newcommand{\ppermutationsof}{\mathfrak{T}}
\newcommand{\parameters}{\overline{\ppermutationsof}}
\newcommand{\columnstrip}{\subset\!\!\!\!\!\cdot\,\,\,}
\newcommand{\smallcolumnstrip}{\subset\!\!\cdot\,\,}
\numberwithin{equation}{section}
\newtheorem{theorem}{Theorem}[section]
\newtheorem{lemma}[theorem]{Lemma}
\newtheorem{proposition}[theorem]{Proposition}
\newtheorem{corollary}[theorem]{Corollary}
\theoremstyle{definition}
\newtheorem{example}[theorem]{Example}
\newtheorem{remark}[theorem]{Remark}
\newcommand{\Xfv}{\mathfrak{X}}
\newcommand{\Grass}{\mathrm{Gr}}
\newcommand{\Flags}{\mathrm{Fl}}
\newcommand{\Lie}{\mathrm{Lie}}
\newcommand{\GL}{\mathrm{GL}}
\newcommand{\C}{\mathbb{C}}
\newcommand{\diag}{\qopname\relax o{diag}}
\newcommand{\graphexA}{\mbox{\tiny
$\begin{picture}(42,20)(0,0)
\put(0,11){$\bullet$}\put(20,11){$\bullet$}\put(40,11){$\bullet$}
\put(0,-11){$\bullet$}\put(20,-11){$\bullet$}
\put(2,12){\line(0,-1){20}}
\put(22,12){\line(0,-1){20}}
\end{picture}$}}
\newcommand{\graphexB}{\mbox{\tiny
$\begin{picture}(42,20)(0,0)
\put(0,11){$\bullet$}\put(20,11){$\bullet$}\put(40,11){$\bullet$}
\put(0,-11){$\bullet$}\put(20,-11){$\bullet$}
\put(2,12){\line(1,-1){20}}
\put(21,12){\line(-1,-1){20}}
\end{picture}$}}
\newcommand{\graphexC}{\mbox{\tiny
$\begin{picture}(42,20)(0,0)
\put(0,11){$\bullet$}\put(20,11){$\bullet$}\put(40,11){$\bullet$}
\put(0,-11){$\bullet$}\put(20,-11){$\bullet$}
\put(21,12){\line(-1,-1){20}}
\put(41,12){\line(-1,-1){20}}
\end{picture}$}}
\newcommand{\graphexD}{\mbox{\tiny
$\begin{picture}(42,20)(0,0)
\put(0,11){$\bullet$}\put(20,11){$\bullet$}\put(40,11){$\bullet$}
\put(0,-11){$\bullet$}\put(20,-11){$\bullet$}
\put(22,12){\line(0,-1){20}}
\put(41,12){\line(-2,-1){40}}
\end{picture}$}}
\newcommand{\graphexE}{\mbox{\tiny
$\begin{picture}(42,20)(0,0)
\put(0,11){$\bullet$}\put(20,11){$\bullet$}\put(40,11){$\bullet$}
\put(0,-11){$\bullet$}\put(20,-11){$\bullet$}
\put(2,12){\line(0,-1){20}}
\put(41,12){\line(-1,-1){20}}
\end{picture}$}}
\newcommand{\graphexF}{\mbox{\tiny
$\begin{picture}(42,20)(0,0)
\put(0,11){$\bullet$}\put(20,11){$\bullet$}\put(40,11){$\bullet$}
\put(0,-11){$\bullet$}\put(20,-11){$\bullet$}
\put(2,12){\line(1,-1){20}}
\put(41,12){\line(-2,-1){40}}
\end{picture}$}}
\newcommand{\graphexG}{\mbox{\tiny
$\begin{picture}(42,20)(0,0)
\put(0,11){$\bullet$}\put(20,11){$\bullet$}\put(40,11){$\bullet$}
\put(0,-11){$\bullet$}\put(20,-11){$\bullet$}
\put(2,12){\line(0,-1){20}}
\put(22.3,13){\circle{8}}
\end{picture}$}}
\newcommand{\graphexH}{\mbox{\tiny
$\begin{picture}(42,20)(0,0)
\put(0,11){$\bullet$}\put(20,11){$\bullet$}\put(40,11){$\bullet$}
\put(0,-11){$\bullet$}\put(20,-11){$\bullet$}
\put(2,12){\line(0,-1){20}}
\put(42.3,13){\circle{8}}
\end{picture}$}}
\newcommand{\graphexI}{\mbox{\tiny
$\begin{picture}(42,20)(0,0)
\put(0,11){$\bullet$}\put(20,11){$\bullet$}\put(40,11){$\bullet$}
\put(0,-11){$\bullet$}\put(20,-11){$\bullet$}
\put(1,12){\line(1,-1){20}}
\put(22.3,13){\circle{8}}
\end{picture}$}}
\newcommand{\graphexJ}{\mbox{\tiny
$\begin{picture}(42,20)(0,0)
\put(0,11){$\bullet$}\put(20,11){$\bullet$}\put(40,11){$\bullet$}
\put(0,-11){$\bullet$}\put(20,-11){$\bullet$}
\put(1,12){\line(1,-1){20}}
\put(42.3,13){\circle{8}}
\end{picture}$}}
\newcommand{\graphexK}{\mbox{\tiny
$\begin{picture}(42,20)(0,0)
\put(0,11){$\bullet$}\put(20,11){$\bullet$}\put(40,11){$\bullet$}
\put(0,-11){$\bullet$}\put(20,-11){$\bullet$}
\put(21,12){\line(-1,-1){20}}
\put(2.3,13){\circle{8}}
\end{picture}$}}
\newcommand{\graphexL}{\mbox{\tiny
$\begin{picture}(42,20)(0,0)
\put(0,11){$\bullet$}\put(20,11){$\bullet$}\put(40,11){$\bullet$}
\put(0,-11){$\bullet$}\put(20,-11){$\bullet$}
\put(21,12){\line(-1,-1){20}}
\put(42.3,13){\circle{8}}
\end{picture}$}}
\newcommand{\graphexM}{\mbox{\tiny
$\begin{picture}(42,20)(0,0)
\put(0,11){$\bullet$}\put(20,11){$\bullet$}\put(40,11){$\bullet$}
\put(0,-11){$\bullet$}\put(20,-11){$\bullet$}
\put(22,12){\line(0,-1){20}}
\put(2.3,13){\circle{8}}
\end{picture}$}}
\newcommand{\graphexN}{\mbox{\tiny
$\begin{picture}(42,20)(0,0)
\put(0,11){$\bullet$}\put(20,11){$\bullet$}\put(40,11){$\bullet$}
\put(0,-11){$\bullet$}\put(20,-11){$\bullet$}
\put(22,12){\line(0,-1){20}}
\put(42.3,13){\circle{8}}
\end{picture}$}}
\newcommand{\graphexO}{\mbox{\tiny
$\begin{picture}(42,20)(0,0)
\put(0,11){$\bullet$}\put(20,11){$\bullet$}\put(40,11){$\bullet$}
\put(0,-11){$\bullet$}\put(20,-11){$\bullet$}
\put(41,12){\line(-2,-1){40}}
\put(2.3,13){\circle{8}}
\end{picture}$}}
\newcommand{\graphexP}{\mbox{\tiny
$\begin{picture}(42,20)(0,0)
\put(0,11){$\bullet$}\put(20,11){$\bullet$}\put(40,11){$\bullet$}
\put(0,-11){$\bullet$}\put(20,-11){$\bullet$}
\put(41,12){\line(-2,-1){40}}
\put(22.3,13){\circle{8}}
\end{picture}$}}
\newcommand{\graphexQ}{\mbox{\tiny
$\begin{picture}(42,20)(0,0)
\put(0,11){$\bullet$}\put(20,11){$\bullet$}\put(40,11){$\bullet$}
\put(0,-11){$\bullet$}\put(20,-11){$\bullet$}
\put(41,12){\line(-1,-1){20}}
\put(2.3,13){\circle{8}}
\end{picture}$}}
\newcommand{\graphexR}{\mbox{\tiny
$\begin{picture}(42,20)(0,0)
\put(0,11){$\bullet$}\put(20,11){$\bullet$}\put(40,11){$\bullet$}
\put(0,-11){$\bullet$}\put(20,-11){$\bullet$}
\put(41,12){\line(-1,-1){20}}
\put(22.3,13){\circle{8}}
\end{picture}$}}
\newcommand{\graphexS}{\mbox{\tiny
$\begin{picture}(42,20)(0,0)
\put(0,11){$\bullet$}\put(20,11){$\bullet$}\put(40,11){$\bullet$}
\put(0,-11){$\bullet$}\put(20,-11){$\bullet$}
\put(2,12){\line(0,-1){20}}
\put(22.3,-8.6){\circle{8}}
\end{picture}$}}
\newcommand{\graphexT}{\mbox{\tiny
$\begin{picture}(42,20)(0,0)
\put(0,11){$\bullet$}\put(20,11){$\bullet$}\put(40,11){$\bullet$}
\put(0,-11){$\bullet$}\put(20,-11){$\bullet$}
\put(2,12){\line(1,-1){20}}
\put(2.3,-8.6){\circle{8}}
\end{picture}$}}
\newcommand{\graphexU}{\mbox{\tiny
$\begin{picture}(42,20)(0,0)
\put(0,11){$\bullet$}\put(20,11){$\bullet$}\put(40,11){$\bullet$}
\put(0,-11){$\bullet$}\put(20,-11){$\bullet$}
\put(21,12){\line(0,-1){20}}
\put(2.3,-8.6){\circle{8}}
\end{picture}$}}
\newcommand{\graphexV}{\mbox{\tiny
$\begin{picture}(42,20)(0,0)
\put(0,11){$\bullet$}\put(20,11){$\bullet$}\put(40,11){$\bullet$}
\put(0,-11){$\bullet$}\put(20,-11){$\bullet$}
\put(21,12){\line(-1,-1){20}}
\put(22.3,-8.6){\circle{8}}
\end{picture}$}}
\newcommand{\graphexW}{\mbox{\tiny
$\begin{picture}(42,20)(0,0)
\put(0,11){$\bullet$}\put(20,11){$\bullet$}\put(40,11){$\bullet$}
\put(0,-11){$\bullet$}\put(20,-11){$\bullet$}
\put(41,12){\line(-2,-1){40}}
\put(22.3,-8.6){\circle{8}}
\end{picture}$}}
\newcommand{\graphexX}{\mbox{\tiny
$\begin{picture}(42,20)(0,0)
\put(0,11){$\bullet$}\put(20,11){$\bullet$}\put(40,11){$\bullet$}
\put(0,-11){$\bullet$}\put(20,-11){$\bullet$}
\put(41,12){\line(-1,-1){20}}
\put(2.3,-8.6){\circle{8}}
\end{picture}$}}
\newcommand{\graphexY}{\mbox{\tiny
$\begin{picture}(42,20)(0,0)
\put(0,11){$\bullet$}\put(20,11){$\bullet$}\put(40,11){$\bullet$}
\put(0,-11){$\bullet$}\put(20,-11){$\bullet$}
\put(2.3,13){\circle{8}}
\put(22.3,13){\circle{8}}
\end{picture}$}}
\newcommand{\graphexZ}{\mbox{\tiny
$\begin{picture}(42,20)(0,0)
\put(0,11){$\bullet$}\put(20,11){$\bullet$}\put(40,11){$\bullet$}
\put(0,-11){$\bullet$}\put(20,-11){$\bullet$}
\put(42.3,13){\circle{8}}
\put(2.3,13){\circle{8}}
\end{picture}$}}
\newcommand{\graphexAA}{\mbox{\tiny
$\begin{picture}(42,20)(0,0)
\put(0,11){$\bullet$}\put(20,11){$\bullet$}\put(40,11){$\bullet$}
\put(0,-11){$\bullet$}\put(20,-11){$\bullet$}
\put(22.3,13){\circle{8}}
\put(42.3,13){\circle{8}}
\end{picture}$}}
\newcommand{\graphexBB}{\mbox{\tiny
$\begin{picture}(42,20)(0,0)
\put(0,11){$\bullet$}\put(20,11){$\bullet$}\put(40,11){$\bullet$}
\put(0,-11){$\bullet$}\put(20,-11){$\bullet$}
\put(2.3,13){\circle{8}}
\put(2.3,-8.6){\circle{8}}
\end{picture}$}}
\newcommand{\graphexCC}{\mbox{\tiny
$\begin{picture}(42,20)(0,0)
\put(0,11){$\bullet$}\put(20,11){$\bullet$}\put(40,11){$\bullet$}
\put(0,-11){$\bullet$}\put(20,-11){$\bullet$}
\put(2.3,13){\circle{8}}
\put(22.3,-8.6){\circle{8}}
\end{picture}$}}
\newcommand{\graphexDD}{\mbox{\tiny
$\begin{picture}(42,20)(0,0)
\put(0,11){$\bullet$}\put(20,11){$\bullet$}\put(40,11){$\bullet$}
\put(0,-11){$\bullet$}\put(20,-11){$\bullet$}
\put(22.3,13){\circle{8}}
\put(2.3,-8.6){\circle{8}}
\end{picture}$}}
\newcommand{\graphexEE}{\mbox{\tiny
$\begin{picture}(42,20)(0,0)
\put(0,11){$\bullet$}\put(20,11){$\bullet$}\put(40,11){$\bullet$}
\put(0,-11){$\bullet$}\put(20,-11){$\bullet$}
\put(22.3,13){\circle{8}}
\put(22.3,-8.6){\circle{8}}
\end{picture}$}}
\newcommand{\graphexFF}{\mbox{\tiny
$\begin{picture}(42,20)(0,0)
\put(0,11){$\bullet$}\put(20,11){$\bullet$}\put(40,11){$\bullet$}
\put(0,-11){$\bullet$}\put(20,-11){$\bullet$}
\put(42.3,13){\circle{8}}
\put(2.3,-8.6){\circle{8}}
\end{picture}$}}
\newcommand{\graphexGG}{\mbox{\tiny
$\begin{picture}(42,20)(0,0)
\put(0,11){$\bullet$}\put(20,11){$\bullet$}\put(40,11){$\bullet$}
\put(0,-11){$\bullet$}\put(20,-11){$\bullet$}
\put(42.3,13){\circle{8}}
\put(22.3,-8.6){\circle{8}}
\end{picture}$}}
\newcommand{\graphexHH}{\mbox{\tiny
$\begin{picture}(42,20)(0,0)
\put(0,11){$\bullet$}\put(20,11){$\bullet$}\put(40,11){$\bullet$}
\put(0,-11){$\bullet$}\put(20,-11){$\bullet$}
\put(22.3,-8.6){\circle{8}}
\put(2.3,-8.6){\circle{8}}
\end{picture}$}}
\newcommand{\graphnA}{\mbox{\tiny
$\begin{picture}(30,20)(0,0)
\put(0,11){$\bullet$}\put(20,11){$\bullet$}\put(40,11){$\bullet$}
\put(0,16){$1^+$}\put(20,16){$2^+$}\put(40,16){$3^+$}
\put(0,-11){$\bullet$}\put(20,-11){$\bullet$}\put(40,-11){$\bullet$}
\put(0,-18){$1^-$}\put(20,-18){$2^-$}\put(40,-18){$3^-$}
\put(2,12){\line(2,-1){40}}
\put(41,12){\line(-2,-1){40}}
\end{picture}$}}
\newcommand{\graphnB}{\mbox{\tiny
$\begin{picture}(30,20)(0,0)
\put(0,11){$\bullet$}\put(20,11){$\bullet$}\put(40,11){$\bullet$}
\put(0,16){$1^+$}\put(20,16){$2^+$}\put(40,16){$3^+$}
\put(0,-11){$\bullet$}\put(20,-11){$\bullet$}\put(40,-11){$\bullet$}
\put(0,-18){$1^-$}\put(20,-18){$2^-$}\put(40,-18){$3^-$}
\put(2,12){\line(0,-1){20}}
\put(41,12){\line(0,-1){20}}
\end{picture}$}}
\newcommand{\coverA}{\mbox{\tiny
$\begin{picture}(30,20)(0,0)
\put(0,11){$\bullet$}\put(20,11){$\bullet$}
\put(0,17){$a^+$}\put(20,17){$b^+$}
\put(0,-11){$\bullet$}\put(20,-11){$\bullet$}
\put(0,-19){$c^-$}\put(20,-19){$d^-$}
\put(2,12){\line(1,-1){20}}
\put(21,12){\line(-1,-1){20}}
\end{picture}$}}
\newcommand{\coverB}{\mbox{\tiny
$\begin{picture}(30,20)(0,0)
\put(0,11){$\bullet$}\put(20,11){$\bullet$}
\put(0,17){$a^+$}\put(20,17){$b^+$}
\put(0,-11){$\bullet$}\put(20,-11){$\bullet$}
\put(0,-19){$c^-$}\put(20,-19){$d^-$}
\put(2,12){\line(0,-1){20}}
\put(22,12){\line(0,-1){20}}
\end{picture}$}}
\newcommand{\coverC}{\mbox{\tiny
$\begin{picture}(30,20)(0,0)
\put(0,11){$\bullet$}\put(20,11){$\bullet$}
\put(0,17){$a^+$}\put(20,17){$b^+$}
\put(0,-11){$\bullet$}
\put(0,-19){$c^-$}
\put(2,12){\line(0,-1){20}}
\put(22,12){\circle{8}}
\end{picture}$}}
\newcommand{\coverD}{\mbox{\tiny
$\begin{picture}(30,20)(0,0)
\put(0,11){$\bullet$}\put(20,11){$\bullet$}
\put(0,17){$a^+$}\put(20,17){$b^+$}
\put(0,-11){$\bullet$}
\put(0,-19){$c^-$}
\put(21,12){\line(-1,-1){20}}
\put(2,12){\circle{8}}
\end{picture}$}}
\newcommand{\coverE}{\mbox{\tiny
$\begin{picture}(30,20)(0,0)
\put(0,11){$\bullet$}
\put(0,17){$a^+$}
\put(0,-11){$\bullet$}\put(20,-11){$\bullet$}
\put(0,-19){$c^-$}\put(20,-19){$d^-$}
\put(2,12){\line(0,-1){20}}
\put(22,-9){\circle{8}}
\end{picture}$}}
\newcommand{\coverF}{\mbox{\tiny
$\begin{picture}(30,20)(0,0)
\put(0,11){$\bullet$}
\put(0,17){$a^+$}
\put(0,-11){$\bullet$}\put(20,-11){$\bullet$}
\put(0,-19){$c^-$}\put(20,-19){$d^-$}
\put(2,12){\line(1,-1){20}}
\put(2,-9){\circle{8}}
\end{picture}$}}
\newcommand{\coverG}{\mbox{\tiny
$\begin{picture}(30,20)(0,0)
\put(0,11){$\bullet$}\put(20,11){$\bullet$}
\put(0,17){$a^+$}\put(20,17){$b^+$}
\put(0,-11){$\bullet$}
\put(0,-19){$c^-$}
\put(21,12){\line(-1,-1){20}}
\end{picture}$}}
\newcommand{\coverH}{\mbox{\tiny
$\begin{picture}(30,20)(0,0)
\put(0,11){$\bullet$}\put(20,11){$\bullet$}
\put(0,17){$a^+$}\put(20,17){$b^+$}
\put(0,-11){$\bullet$}
\put(0,-19){$c^-$}
\put(2,12){\line(0,-1){20}}
\end{picture}$}}
\newcommand{\coverI}{\mbox{\tiny
$\begin{picture}(30,20)(0,0)
\put(0,11){$\bullet$}
\put(0,17){$a^+$}
\put(0,-11){$\bullet$}\put(20,-11){$\bullet$}
\put(0,-19){$c^-$}\put(20,-19){$d^-$}
\put(2,12){\line(1,-1){20}}
\end{picture}$}}
\newcommand{\coverJ}{\mbox{\tiny
$\begin{picture}(30,20)(0,0)
\put(0,11){$\bullet$}
\put(0,17){$a^+$}
\put(0,-11){$\bullet$}\put(20,-11){$\bullet$}
\put(0,-19){$c^-$}\put(20,-19){$d^-$}
\put(2,12){\line(0,-1){20}}
\end{picture}$}}
\newcommand{\coverK}{\mbox{\tiny
$\begin{picture}(10,20)(0,0)
\put(0,11){$\bullet$}
\put(0,17){$a^+$}
\put(0,-11){$\bullet$}
\put(0,-19){$c^-$}
\put(2,12){\line(0,-1){20}}
\end{picture}$}}
\newcommand{\coverL}{\mbox{\tiny
$\begin{picture}(10,20)(0,0)
\put(0,11){$\bullet$}
\put(0,17){$a^+$}
\put(0,-11){$\bullet$}
\put(0,-19){$c^-$}
\put(2,12){\circle{8}}
\end{picture}$}}
\newcommand{\coverM}{\mbox{\tiny
$\begin{picture}(10,20)(0,0)
\put(0,11){$\bullet$}
\put(0,17){$a^+$}
\put(0,-11){$\bullet$}
\put(0,-19){$c^-$}
\put(2,12){\line(0,-1){20}}
\end{picture}$}}
\newcommand{\coverN}{\mbox{\tiny
$\begin{picture}(10,20)(0,0)
\put(0,11){$\bullet$}
\put(0,17){$a^+$}
\put(0,-11){$\bullet$}
\put(0,-19){$c^-$}
\put(2,-9){\circle{8}}
\end{picture}$}}
\newcommand{\coverO}{\mbox{\tiny
$\begin{picture}(30,20)(0,0)
\put(0,11){$\bullet$}\put(20,11){$\bullet$}
\put(0,17){$a^+$}\put(20,17){$b^+$}
\put(22,12){\circle{8}}
\end{picture}$}}
\newcommand{\coverP}{\mbox{\tiny
$\begin{picture}(30,20)(0,0)
\put(0,11){$\bullet$}\put(20,11){$\bullet$}
\put(0,17){$a^+$}\put(20,17){$b^+$}
\put(2,12){\circle{8}}
\end{picture}$}}
\newcommand{\coverQ}{\mbox{\tiny
$\begin{picture}(30,20)(0,0)
\put(0,-11){$\bullet$}\put(20,-11){$\bullet$}
\put(0,-19){$c^-$}\put(20,-19){$d^-$}
\put(22,-9){\circle{8}}
\end{picture}$}}
\newcommand{\coverR}{\mbox{\tiny
$\begin{picture}(30,20)(0,0)
\put(0,-11){$\bullet$}\put(20,-11){$\bullet$}
\put(0,-19){$c^-$}\put(20,-19){$d^-$}
\put(2,-9){\circle{8}}
\end{picture}$}}
\newcommand{\graphA}{\textcolor{black}{\mbox{\tiny $\begin{picture}(24,24)(0,-9)
\put(0,10){$\bullet$}\put(20,10){$\bullet$}
\put(0,-10){$\bullet$}\put(20,-10){$\bullet$}
\put(2,12){\line(1,-1){20}}
\put(22,12){\line(-1,-1){20}}
\end{picture}$}}}
\newcommand{\graphC}{\textcolor{black}{\mbox{\tiny $\begin{picture}(24,24)(0,-9)
\put(0,10){$\bullet$}\put(20,10){$\bullet$}
\put(0,-10){$\bullet$}\put(20,-10){$\bullet$}
\put(2,12){\line(0,-1){20}}
\put(22,12){\line(0,-1){20}}
\end{picture}$}}}
\newcommand{\graphB}{\textcolor{black}{\mbox{\tiny $\begin{picture}(24,24)(0,-9)
\put(0,10){$\bullet$}\put(20,10){$\bullet$}
\put(0,-10){$\bullet$}\put(20,-10){$\bullet$}
\put(2,12){\line(1,-1){20}}
\put(22.3,11.7){\circle{8}}
\end{picture}$}}}
\newcommand{\graphD}{\textcolor{black}{\mbox{\tiny $\begin{picture}(24,24)(0,-9)
\put(0,10){$\bullet$}\put(20,10){$\bullet$}
\put(0,-10){$\bullet$}\put(20,-10){$\bullet$}
\put(22,12){\line(-1,-1){20}}
\put(22.3,-8){\circle{8}}
\end{picture}$}}}
\newcommand{\graphE}{\textcolor{black}{\mbox{\tiny $\begin{picture}(24,24)(0,-9)
\put(0,10){$\bullet$}\put(20,10){$\bullet$}
\put(0,-10){$\bullet$}\put(20,-10){$\bullet$}
\put(2,12){\line(0,-1){20}}
\put(22.3,11.7){\circle{8}}
\end{picture}$}}}
\newcommand{\graphF}{\textcolor{black}{\mbox{\tiny $\begin{picture}(24,24)(0,-9)
\put(0,10){$\bullet$}\put(20,10){$\bullet$}
\put(0,-10){$\bullet$}\put(20,-10){$\bullet$}
\put(22,12){\line(0,-1){20}}
\put(2.3,11.7){\circle{8}}
\end{picture}$}}}
\newcommand{\graphG}{\textcolor{black}{\mbox{\tiny $\begin{picture}(24,24)(0,-9)
\put(0,10){$\bullet$}\put(20,10){$\bullet$}
\put(0,-10){$\bullet$}\put(20,-10){$\bullet$}
\put(22.3,11.7){\circle{8}}
\put(2.32,-8){\circle{8}}
\end{picture}$}}}
\newcommand{\graphH}{\textcolor{black}{\mbox{\tiny $\begin{picture}(24,24)(0,-9)
\put(0,10){$\bullet$}\put(20,10){$\bullet$}
\put(0,-10){$\bullet$}\put(20,-10){$\bullet$}
\put(22,12){\line(0,-1){20}}
\put(2.3,-8){\circle{8}}
\end{picture}$}}}
\newcommand{\graphI}{\textcolor{black}{\mbox{\tiny $\begin{picture}(24,24)(0,-9)
\put(0,10){$\bullet$}\put(20,10){$\bullet$}
\put(0,-10){$\bullet$}\put(20,-10){$\bullet$}
\put(2,12){\line(0,-1){20}}
\put(22.3,-8){\circle{8}}
\end{picture}$}}}
\newcommand{\graphJ}{\textcolor{black}{\mbox{\tiny $\begin{picture}(24,24)(0,-9)
\put(0,10){$\bullet$}\put(20,10){$\bullet$}
\put(0,-10){$\bullet$}\put(20,-10){$\bullet$}
\put(22,12){\line(-1,-1){20}}
\put(2.3,11.7){\circle{8}}
\end{picture}$}}}
\newcommand{\graphK}{\textcolor{black}{\mbox{\tiny $\begin{picture}(24,24)(0,-9)
\put(0,10){$\bullet$}\put(20,10){$\bullet$}
\put(0,-10){$\bullet$}\put(20,-10){$\bullet$}
\put(22.3,11.7){\circle{8}}
\put(2.3,-8){\circle{8}}
\end{picture}$}}}
\newcommand{\graphL}{\textcolor{black}{\mbox{\tiny $\begin{picture}(24,24)(0,-9)
\put(0,10){$\bullet$}\put(20,10){$\bullet$}
\put(0,-10){$\bullet$}\put(20,-10){$\bullet$}
\put(2.3,11.7){\circle{8}}
\put(22.3,-8){\circle{8}}
\end{picture}$}}}
\newcommand{\graphM}{\textcolor{black}{\mbox{\tiny $\begin{picture}(24,24)(0,-9)
\put(0,10){$\bullet$}\put(20,10){$\bullet$}
\put(0,-10){$\bullet$}\put(20,-10){$\bullet$}
\put(2,12){\line(1,-1){20}}
\put(2.3,-8){\circle{8}}
\end{picture}$}}}
\newcommand{\graphN}{\textcolor{black}{\mbox{\tiny $\begin{picture}(24,24)(0,-9)
\put(0,10){$\bullet$}\put(20,10){$\bullet$}
\put(0,-10){$\bullet$}\put(20,-10){$\bullet$}
\put(2.3,11.7){\circle{8}}
\put(22.3,11.7){\circle{8}}
\end{picture}$}}}
\newcommand{\graphO}{\textcolor{black}{\mbox{\tiny $\begin{picture}(24,24)(0,-9)
\put(0,10){$\bullet$}\put(20,10){$\bullet$}
\put(0,-10){$\bullet$}\put(20,-10){$\bullet$}
\put(2.3,11.7){\circle{8}}
\put(2.3,-8){\circle{8}}
\end{picture}$}}}
\newcommand{\graphP}{\textcolor{black}{\mbox{\tiny $\begin{picture}(24,24)(0,-9)
\put(0,10){$\bullet$}\put(20,10){$\bullet$}
\put(0,-10){$\bullet$}\put(20,-10){$\bullet$}
\put(2.3,-8){\circle{8}}
\put(22.3,-8){\circle{8}}
\end{picture}$}}}
\begin{document}

\title{On generalized Steinberg theory for type AIII}
\author{Lucas Fresse and Kyo Nishiyama}
\address{Universit\'e de Lorraine, CNRS, Institut \'Elie Cartan de Lorraine, UMR 7502, Vandoeu\-vre-l\`es-Nancy, F-54506, France}
\email{lucas.fresse@univ-lorraine.fr}
\address{Department of
Mathematics, Aoyama Gakuin University, Fuchinobe 5-10-1, Chuo-ku,
Sagamihara 252-5258, Japan}
\email{kyo@gem.aoyama.ac.jp}

\begin{abstract}
The multiple flag variety
$\Xfv=\mathrm{Gr}(\mathbb{C}^{p+q},r)\times(\mathrm{Fl}(\mathbb{C}^p)\times\mathrm{Fl}(\mathbb{C}^q))$
can be considered as a double flag variety associated to the symmetric pair $(G,K)=(\mathrm{GL}_{p+q}(\mathbb{C}),$ $\mathrm{GL}_{p}(\mathbb{C})\times \mathrm{GL}_{q}(\mathbb{C}))$ of type AIII.
We consider the diagonal action of $K$ on $\Xfv$.
There is a finite number of orbits for this action, and our first result is a description of these orbits: parametrization (by a certain set of graphs), dimensions, closure relations and cover relations.

In \cite{FN2}, we defined two generalized Steinberg maps from the $K$-orbits of $\Xfv$ to the nilpotent $K$-orbits in $\mathfrak{k}$ and those in the Cartan complement of $\mathfrak{k}$, respectively.
The main result in the present paper is a complete, explicit description of these two Steinberg maps
by means of a combinatorial algorithm
which extends the classical Robinson--Schensted correspondence.
\end{abstract}

\keywords{Steinberg variety; conormal bundle; exotic moment map; nilpotent orbits;
double flag variety; Robinson--Schensted correspondence; partial permutations}

\subjclass[2010]{14M15 (primary); 17B08, 53C35, 05A15 (secondary)}

\maketitle

\setcounter{tocdepth}{2}

\tableofcontents

\section{Introduction}

\subsection{A multiple flag variety and its orbital decomposition}
\label{section-1.1}
In this paper, we consider the multiple flag variety
\begin{equation}
\label{equation-dfv}
\Xfv=\Grass(V,r)\times \Flags(V^+)\times \Flags(V^-),
\end{equation}
where
\begin{itemize}
\item $V=\C^{p+q}$ is equipped with a polar decomposition $V=V^+\oplus V^-$ with $V^+=\C^p\times\{0\}^q$ and $V^-=\{0\}^p\times \C^q$;
\item $\Grass(V,r)$ denotes the Grassmann variety of $r$-dimensional subspaces of $V$;
\item $\Flags(V^+)$ and $\Flags(V^-)$ denote the varieties of complete flags of $V^+$ and $V^-$, respectively.
\end{itemize}
Each factor of the variety $\Xfv$ has a natural action of
\[
K:=\GL(V^+)\times \GL(V^-)=\left\{\begin{pmatrix} a & 0 \\ 0 & d \end{pmatrix}:a\in\GL_p(\C),\ d\in\GL_q(\C)\right\}\subset\GL(V),
\]
and $\Xfv$ is endowed with the resulting diagonal action of $K$.

The multiple flag variety $\Xfv$ can be written in the form
\[
\Xfv=G/P\times K/B_K,
\]
where $G=\GL(V)$, $P\subset G$ is a maximal parabolic subgroup, $B_K\subset K$ is a Borel subgroup.
In this way, $\Xfv$ is a \textit{double flag variety associated to the symmetric pair}
$(G,K)$
in the sense of \cite{NO} and \cite{HNOO}.
In particular, it is known from \cite{NO} that the above variety $\Xfv$ has a finite number of $K$-orbits.

In \cite{FN1,FN2}, we have initiated an analogue of Steinberg theory for double flag varieties associated to symmetric pairs such as $\Xfv$.
Specifically, we have defined two Steinberg maps, from the set of $K$-orbits of $\Xfv$ to the sets of nilpotent $K$-orbits of $\fk:=\Lie(K)$ and of its Cartan complement $\mathfrak{s}$, respectively.

For general symmetric pairs, the calculation of the generalized Steinberg maps appears to be quite difficult.
In \cite{FN2}, we have considered the variety $\Xfv$ of (\ref{equation-dfv}) in the special case where $p=q=r$ and we have computed the Steinberg maps on a special subset of $K$-orbits parametrized by partial permutations.
The present paper deals with the variety $\Xfv$ of (\ref{equation-dfv}) and its $K$-orbits in full generality.

Here we summarize the main results achieved in this paper:
\begin{itemize}
\item We describe completely the decomposition of $\Xfv$ into $K$-orbits: we give a para\-metrization of the orbits (in terms of certain graphs), we provide a dimension formula, and describe the closure relations and the cover relations; see Section \ref{section-2.2}.
\item Our main result (Theorem \ref{T2}) is the calculation of the two Steinberg maps mentioned above. This calculation is concrete, and it is done by means of a sophisticated combinatorial algorithm that generalizes the Robinson--Schensted correspondence; see Sections \ref{section-2.3}--\ref{section-2.4}.
\end{itemize}
In the following subsection, we explain the construction of the generalized Steinberg maps and we give more insight on our main result.

\subsection{Conormal variety and Steinberg maps}
\label{section-1.2}
We consider the Lie algebras
\[
\fg=\gl_{p+q}(\C)\supset\fk=\left\{\begin{pmatrix} a & 0 \\ 0 & d \end{pmatrix}:a\in\gl_p(\C),\ d\in\gl_q(\C)\right\}
\]
and a Cartan decomposition
\[
\fg=\fk\oplus\fs\qquad\mbox{where}\qquad\fs=\left\{\begin{pmatrix} 0 & b \\ c & 0 \end{pmatrix}:b\in\Mat_{p,q}(\C),\ c\in\Mat_{q,p}(\C)\right\}.
\]
We write $x=x_\fk+x_\fs$ with $(x_\fk,x_\fs)\in\fk\times\fs$ for the decomposition of an element $x\in\fg$ along the Cartan decomposition.
Moreover, we identify the Lie algebras $\fg$ and $\fk$ with their duals $\fg^*$ and $\fk^*$ through the trace form.

Any partial flag $\flag=(F_0=0\subset F_1\subset\ldots\subset F_k=V')$ of a vector space $V'$ gives rise to a parabolic subalgebra $\fp(\flag)$ of $\gl(V')$ and the corresponding nilradical $\nil(\flag)$ defined by
\begin{eqnarray*}
\fp(\flag) & = & \mathrm{Stab}_{\gl(V')}(\flag)=\{x\in\gl(V'):x(F_i)\subset F_i\quad \forall i=1,\ldots,k\}, \\
\nil(\flag) & = & \{x\in\gl(V'):x(F_i)\subset F_{i-1}\quad\forall i=1,\ldots,k\}.
\end{eqnarray*}
For a subspace $W\subset V'$, we denote by $\fp(W)$ and $\nil(W)$ the (maximal) parabolic subalgebra and the nilradical associated to the partial flag $(0\subset W\subset V')$.

As explained in \cite[\S3]{FN2}, the cotangent bundle $T^*\Xfv$ inherits a Hamiltonian action of $K$, which gives rise to a moment map $\mu_\Xfv:T^*\Xfv\to\fk^*=\fk$.
The nullfiber $\conormal=\mu_\Xfv^{-1}(0)$ is called a {\em conormal variety}. It can be described explicitly as
\[
\conormal=\{(W,\flag^+,\flag^-,x)\in \Xfv\times\gl(V):x\in\nil(W),\ x_\fk\in\nil(\flag^+)\times\nil(\flag^-)\}.
\]

Every $K$-orbit $\Xorbit\subset \Xfv$ yields a conormal bundle $T^*_\Xorbit\Xfv$ that can be realized as a (locally-closed) subvariety of $\conormal$ given by
\[
T^*_\Xorbit\Xfv=\{(W,\flag^+,\flag^-,x)\in\conormal:(W,\flag^+,\flag^-)\in\Xorbit\}.
\]
The variety $\conormal$ is equidimensional of dimension $\dim\Xfv$, and its irreducible components are precisely the closures of the various conormal bundles $T^*_\Xorbit\Xfv$, since the set of orbits $\Xfv/K$ is finite.
One can find a more comprehensive introduction to the theory of conormal varieties in \cite[\S 3]{FN2} (see also \cite{CG}).

The conormal variety $\conormal$ is equipped with two $K$-equivariant projections to $\fk$ and $\fs$, namely
\[\phi_\fk:\conormal\to\fk,\ (W,\flag^+,\flag^-,x)\mapsto x_\fk\quad\mbox{and}\quad
\phi_\fs:\conormal\to\fs,\ (W,\flag^+,\flag^-,x)\mapsto x_\fs.\]
It immediately follows from the description of the conormal variety that the image of $\phi_\fk$ is contained in the cone of nilpotent elements $\nilpotentsof{\fk}\subset\fk$.
It is shown in \cite[Proposition 4.2]{FN1} that (for the variety $\Xfv$ considered in this paper) the image of $\phi_\fs$ is also contained in the nilpotent cone $\nilpotentsof{\fs}\subset\fs$.
It is known that both nilpotent cones $\nilpotentsof{\fk}$ and $\nilpotentsof{\fs}$ consist of finitely many adjoint $K$-orbits

Therefore, we can define two maps
\[\Phi_\fk:\Xfv/K\to\nilpotentsof{\fk}/K
\quad\mbox{and}\quad
\Phi_\fs:\Xfv/K\to\nilpotentsof{\fs}/K\]
in the following way: for every orbit $\Xorbit\in\Xfv/K$, define $\Phi_\fk(\Xorbit)\in \nilpotentsof{\fk}/K$, resp. $\Phi_\fs(\Xorbit)\in\nilpotentsof{\fs}/K$, as the unique nilpotent $K$-orbit which is open and dense in the image of the conormal bundle $T^*_\Xorbit\Xfv$ by the projection map $\phi_\fk$, resp. $\phi_\fs$. According to the terminology introduced in \cite{FN2}, we will refer to $\Phi_\fk$ as the {\em symmetrized Steinberg map} and to $\Phi_\fs$ as the {\em exotic Steinberg map}.

In Section \ref{section-2.3}, we describe the maps $\Phi_\fk$ and $\Phi_\fs$. In \cite{FN2}, in the special case where $p=q=r$,
the images $\Phi_\fk(\Xorbit)$ and $\Phi_\fs(\Xorbit)$ are determined when an orbit $\Xorbit$ is contained in a ``big cell'' of $\Xfv/K$.
Moreover, in Section \ref{section-2.4}, we describe the fibers of $\Phi_\fk$ by means of a combinatorial procedure that extends the classical Robinson--Schensted correspondence; this also generalizes \cite[Theorem 7.8]{FN2}.
In this way, the results in the present paper are new and complement those in \cite{FN2}, as now we have a full description of the two Steinberg maps and a better understanding of them at the same time. Note that the results given in Section \ref{section-2.3} regarding $\Phi_\fk$ and for $p=q=r$ were already announced in \cite[\S 2]{FN3} without proofs.

\section{Main results}

\subsection{Combinatorial notation on pairs of partial permutations}

\label{section-2.1}

By $\ppermutationsof_{p,r}$ we denote the set of $p\times r$ matrices whose coefficients are $0$ or $1$, with at most one $1$ in each row and each column.
(If $p=r$, we recover the set of partial permutation matrices considered in \cite{FN2}.)
By $\ppermutationsof=\ppermutationsof_{(p,q),r}$ we denote the set of $(p+q)\times r$ matrices of rank $r$ (we have $r\leq p+q$) of the form
\[\omega=\begin{pmatrix} \tau_1 \\ \tau_2 \end{pmatrix},\]
where $\tau_1\in\ppermutationsof_{p,r}$ and $\tau_2\in\ppermutationsof_{q,r}$.
Note that the symmetric group $\permutationsof{r}$ acts on $\ppermutationsof$ by right multiplication, and we denote the quotient set by $\parameters=\ppermutationsof/\permutationsof{r}$.

{\em In Section \ref{section-2.2}, we will show that the elements of $\parameters$ parameterize the $K$-orbits of $\Xfv$.}

\medskip
\paragraph{\bf Graphic representation of a pair of partial permutations}\

We represent any element $\omega\in\parameters$ by a graph $\mathcal{G}(\omega)$ obtained as follows:
\begin{itemize}
\item
The set of vertices consists of $p$ ``positive'' vertices $1^+,\ldots,p^+$ and $q$ ``negative'' vertices $1^-,\ldots,q^-$, displayed along two horizontal lines.
\item Put an edge between $i^+$ and $j^-$ for every column of $\omega$ that contains exactly two $1$'s, in positions $i$ (within the block $\tau_1$) and $p+j$ (within the block $\tau_2$).
\item
Put a mark at the vertex $i^+$, respectively $j^-$, for every column of $\omega$ that contains exactly one $1$, in position $i$ (within $\tau_1$), respectively $p+j$ (within $\tau_2$).
\end{itemize}
For instance, for $(p,q)=(5,3)$ and $r=4$,
\begin{equation}
\label{2.1}
\omega=\mbox{\tiny $\begin{pmatrix} 0 & 0 & 0 & 0 \\ 1 & 0 & 0 & 0 \\ 0 & 0 & 0 & 0 \\ 0 & 1 & 0 & 0 \\ 0 & 0 & 1 & 0 \\ \hline 0 & 1 & 0 & 0 \\ 0 & 0 & 0 & 1 \\ 1 & 0 & 0 & 0 \end{pmatrix}$}
\qquad
\Rightarrow
\qquad
\mathcal{G}(\omega)=
\mbox{\tiny
$\begin{picture}(100,20)(0,0)
\put(0,11){$\bullet$}\put(20,11){$\bullet$}\put(40,11){$\bullet$}\put(60,11){$\bullet$}\put(80,11){$\bullet$}
\put(0,18){$1^+$}\put(20,18){$2^+$}\put(40,18){$3^+$}\put(60,18){$4^+$}\put(80,18){$5^+$}
\put(0,-11){$\bullet$}\put(20,-11){$\bullet$}\put(40,-11){$\bullet$}
\put(0,-20){$1^-$}\put(20,-20){$2^-$}\put(40,-20){$3^-$}
\put(21,13){\line(1,-1){21}}
\put(61,12){\line(-3,-1){60}}
\put(82,13){\circle{8}}
\put(22.5,-9){\circle{8}}
\end{picture}$}.
\end{equation}
We will say that a vertex is {\em free} whenever it is not a marked point nor an end point of an edge
(like $1^+$ and $3^+$ in the above example).

In general, the assignment $\omega\mapsto\mathcal{G}(\omega)$ establishes a bijection between $\parameters$ and the set of graphs with vertices 
$\{1^+,\ldots,p^+\}\cup\{1^-,\ldots,q^-\}$,
exactly $r$ edges or marked vertices, where every vertex is incident with at most one edge, and such that there is no edge which is incident with a marked vertex or joins two vertices of the same sign.

\medskip
\paragraph{\bf Numerical invariants for the graph}\

The following data associated to an element $\omega\in\parameters$ and its graphic representation $\mathcal{G}(\omega)$ will play a role in the statement of our main results.
\begin{itemize}
\item Set the {\em degree} of a vertex of $\mathcal{G}(\omega)$ as $0$, $1$, or $2$, depending on whether this vertex is free, incident with an edge, or marked.

We define $a^+(\omega)$, respectively $a^-(\omega)$, as the number of pairs
of positive vertices $(i^+,j^+)$ with $i<j$ and $\deg(i^+)<\deg(j^+)$,
respectively pairs of negative vertices $(i^-,j^-)$ with $i<j$ and $\deg(i^-)<\deg(j^-)$.

Let $b(\omega)$ be the number of edges of $\mathcal{G}(\omega)$.

Finally, let $c(\omega)$ be the number of \textit{crossings}, i.e., pairs of edges $(i^+,j^-)$, $(k^+,\ell^-)$ such that $i<k$ and $j>\ell$.

\item For all $(i,j)\in\{0,1,\ldots,p\}\times\{0,1,\ldots,q\}$, let $r_{i,j}(\omega)$ be the number of edges or marks contained in the subgraph of $\mathcal{G}(\omega)$ formed by the vertices $k^+$ ($1\leq k\leq i$), $\ell^-$ ($1\leq \ell\leq j$) and the edges/marks contained within this set of vertices. Let $R(\omega)=\left( r_{i,j}(\omega) \right)_{0\leq i\leq p,\,0\leq j\leq q}$ be the $(p+1)\times(q+1)$ matrix containing these numbers.

In particular, $r_{i,0}(\omega)$ (resp., $r_{0,j}(\omega)$) is the
number of marked vertices among
$\{1^+,\ldots,i^+\}$ (resp., $\{1^-,\ldots,j^-\}$).
\end{itemize}
{\em In Section \ref{section-2.2}, the numbers $a^\pm(\omega)$, $b(\omega)$, $c(\omega)$ appear in the dimension formula for the $K$-orbits of $\Xfv$, while the matrices $R(\omega)$ are used to describe the inclusion relations between orbit closures.}
\begin{itemize}
\item We decompose $\{1,\ldots,p\}=I\sqcup L\sqcup L'$ in the following way:
$I$, resp. $L$, resp. $L'$, denotes the set of elements $i\in\{1,\ldots,p\}$ such that $i^+$ is a vertex of $\mathcal{G}(\omega)$ of degree $1$, resp. $2$, resp. $0$.

We decompose $\{1,\ldots,q\}=J\sqcup M\sqcup M'$ in the same way: $J$, resp. $M$, resp. $M'$, consists of the elements $j$ such that $j^-$ has degree $1$, resp. $2$, resp. $0$.

Let $\sigma:J\to I$ be the bijection defined by letting $\sigma(j)=i$ if $(i^+,j^-)$ is an edge in $\mathcal{G}(\omega)$.
\end{itemize}
Note that $\omega$ is characterized by the subsets $I$, $L$, $L'$, $J$, $M$, $M'$ and the bijection $\sigma:J\to I$.
{\em In Section \ref{section-2.3},
these data are used to compute
the symmetrized and exotic Steinberg maps, by means of a combinatorial algorithm.}

Note also that we have $b(\omega)=\card{I}=\card{J}$, and $c(\omega)$ is the number of inversions of $\sigma$.

\begin{example}
\label{E2.1}
Let $\omega$ be as in (\ref{2.1}). Then,
\begin{eqnarray*}
 & a^+(\omega)=7,\quad a^-(\omega)=1,\quad b(\omega)=2,\quad c(\omega)=1, \quad
R(\omega)=\mbox{\tiny $\begin{pmatrix} 0 & 0 & 1 & 1 \\ 0 & 0 & 1 & 1 \\ 0 & 0 & 1 & 2 \\ 0 & 0 & 1 & 2 \\ 0 & 1 & 2 & 3 \\ 1 & 2 & 3 & 4 \end{pmatrix}$} \\
& 
\begin{array}{llll}
I=\{2,4\}, & L=\{5\}, & L'=\{1,3\},\\[2mm]
J=\{1,3\}, & M=\{2\}, & M'=\emptyset,
\quad\sigma=\begin{pmatrix} 1 & 3 \\ 4 & 2 \end{pmatrix}\in\mathrm{Bij}(J,I).
\end{array}
\end{eqnarray*}
Note that the matrix $R(\omega)$ can also be viewed as a plane partition.
\end{example}

\subsection{Orbit decomposition of the multiple flag variety $\Xfv$}

\label{section-2.2}

Recall that we consider the space $V=\C^{p+q}$ endowed with the polar decomposition
\[V=V^+\oplus V^-\qquad\mbox{where}\qquad V^+=\C^p\times\{0\}^q\quad\mbox{and}\quad V^-=\{0\}^p\times \C^q.\]
Let
\[
\flag_0^+=(\C^i\times\{0\}^{p-i}\times\{0\}^q)_{i=0}^p
\quad\mbox{and}\quad
\flag_0^-=(\{0\}^{p}\times\C^j\times\{0\}^{q-j})_{j=0}^q
\]
be the standard complete flags of $V^+$ and $V^-$.

Every  $(p+q)\times r$ matrix $\omega$ determines a subspace $[\omega]:=\mathrm{Im}\,\omega\subset V$, which remains the same up to permutation of the columns of $\omega$.
In particular, every $\omega\in\parameters$ determines a point $[\omega]$ in $\Grass(V,r)$, and thus a point $([\omega],\flag_0^+,\flag_0^-)$ in $\Xfv=\Grass(V,r)\times\Flags(V^+)\times\Flags(V^-)$.

\begin{theorem}
\label{T1}
\begin{enumerate}
\item Every $K$-orbit in $\Xfv$ is of the form $\Xorbit_\omega:=K\cdot([\omega],\flag_0^+,\flag_0^-)$ for a unique element $\omega\in\parameters=\ppermutationsof_{(p,q),r}/\permutationsof{r}$.
\item $\dim\Xorbit_\omega=\frac{p(p-1)}{2}+\frac{q(q-1)}{2}+a^+(\omega)+a^-(\omega)+\frac{b(\omega)(b(\omega)+1)}{2}+c(\omega)$.
\item $\Xorbit_\omega$ is the set of triples $(W,\flag^+=(F^+_i)_{i=0}^p,\flag^-=(F_j^-)_{j=0}^q)\in\Xfv$
satisfying the condition
\[\dim W\cap(F_i^++F_j^-)=r_{i,j}(\omega)\ \ \mbox{for all $(i,j)\in\{0,\ldots,p\}\times\{0,\ldots,q\}$}.\]
 \item $\overline{\Xorbit_\omega}\subset\overline{\Xorbit_{\omega'}}$ if and only if $r_{i,j}(\omega)\geq r_{i,j}(\omega')$ for all $(i,j)\in\{0,\ldots,p\}\times\{0,\ldots,q\}$.
\end{enumerate}
\end{theorem}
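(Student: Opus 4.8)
The plan is to reduce everything to the action of the Borel subgroup $B_K=\mathrm{Stab}_K(\flag_0^+,\flag_0^-)$ on the Grassmannian factor. Since $\GL(V^+)$ and $\GL(V^-)$ act transitively on $\Flags(V^+)$ and $\Flags(V^-)$ respectively, $K$ acts transitively on $\Flags(V^+)\times\Flags(V^-)$ with stabilizer $B_K=B^+\times B^-$, where $B^+$ (resp. $B^-$) is the Borel of $\GL(V^+)$ (resp. $\GL(V^-)$) fixing $\flag_0^+$ (resp. $\flag_0^-$). Consequently the projection $\Xfv\to\Flags(V^+)\times\Flags(V^-)$ is $K$-equivariant with transitive base, so every $K$-orbit meets the fiber over $(\flag_0^+,\flag_0^-)$ in a single $B_K$-orbit of $\Grass(V,r)$, and the dimension of any $K$-orbit equals $\frac{p(p-1)}{2}+\frac{q(q-1)}{2}$ plus the dimension of the corresponding $B_K$-orbit $B_K\cdot W$. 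Thus (1)--(4) become statements about $B_K$-orbits on $\Grass(V,r)$, which I will analyze through the rank matrix $W\mapsto\big(\dim W\cap(F_i^++F_j^-)\big)_{i,j}$.

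For (1) and (3) I would argue simultaneously. Represent $W$ by the columns of a rank-$r$ matrix $M\in\Mat_{(p+q),r}$; the allowed moves are arbitrary right multiplication by $\GL_r$ (change of basis of $W$) together with left multiplication by $B^+\times B^-$ (flag-compatible row operations within each block). Block-wise Gaussian elimination brings $M$ into a matrix whose columns each carry at most one nonzero entry in the top block and at most one in the bottom block, with distinct pivot rows --- that is, into an element of $\ppermutationsof$ well-defined up to column permutation, i.e. an element $\omega\in\parameters$. This shows that the points $[\omega]$ exhaust all $B_K$-orbits. The numbers $r_{i,j}(\omega)$ are manifestly $B_K$-invariant since $B_K$ preserves each $F_i^++F_j^-$, and a direct count on this normal form gives $\dim[\omega]\cap(F_i^++F_j^-)=r_{i,j}(\omega)$: a column contributes exactly when both of its pivots lie within rows $\leq i$ (top) and $\leq j$ (bottom). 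Finally, the increments $r_{i,j}-r_{i-1,j}-r_{i,j-1}+r_{i-1,j-1}$ and $r_{i,0}-r_{i-1,0}$, $r_{0,j}-r_{0,j-1}$ recover all edges and marks, so $R$ is injective on $\parameters$; hence the rank matrix is a complete invariant. Combining existence of the representatives $[\omega]$, invariance of $R$, and injectivity of $R$, two subspaces with equal rank matrices lie in the same orbit, yielding the orbit description (3) and the bijection $\omega\mapsto\Xorbit_\omega$ of (1).

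For the dimension formula (2) it remains to compute $\dim(B_K\cdot[\omega])=\dim B_K-\dim\mathrm{Stab}_{B_K}([\omega])$, equivalently to count the free parameters surviving in the echelon normal form once the pivots are fixed. I expect the four contributions to split exactly as in the statement: the triangular parameters within the positive and negative blocks that cannot be eliminated against a pivot of equal-or-higher degree produce $a^+(\omega)$ and $a^-(\omega)$; the parameters attached to the $b(\omega)$ edges account for $\frac{b(\omega)(b(\omega)+1)}{2}$; and the interaction between crossing edges contributes the inversion number $c(\omega)$. This is a finite bookkeeping exercise organized by the decomposition $\{1,\dots,p\}=I\sqcup L\sqcup L'$, $\{1,\dots,q\}=J\sqcup M\sqcup M'$ and the bijection $\sigma$.

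The delicate point is the closure order (4). One inclusion is soft: for each fixed subspace $U$ the function $W\mapsto\dim(W\cap U)$ is upper semicontinuous, so each rank variety $Z_{\omega'}:=\{W:\dim W\cap(F_i^++F_j^-)\geq r_{i,j}(\omega')\ \forall i,j\}$ is closed, whence $\overline{\Xorbit_{\omega'}}\subseteq Z_{\omega'}$; comparing with (3) this already gives the implication ``$\overline{\Xorbit_\omega}\subseteq\overline{\Xorbit_{\omega'}}\Rightarrow R(\omega)\geq R(\omega')$''. The hard direction is to show that $R(\omega)\geq R(\omega')$ forces $\Xorbit_\omega\subseteq\overline{\Xorbit_{\omega'}}$, i.e. that $\overline{\Xorbit_{\omega'}}$ is \emph{all} of $Z_{\omega'}$ and not merely contained in it. Since the rank matrices separate orbits, $Z_{\omega'}$ is the finite union of the orbits $\Xorbit_\omega$ with $R(\omega)\geq R(\omega')$, so it suffices to check that $Z_{\omega'}$ is irreducible with $\Xorbit_{\omega'}$ as its unique dense orbit. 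This is where the main work lies; I would either identify $Z_{\omega'}$ with an irreducible determinantal (quiver) locus of the expected dimension $\dim\Xorbit_{\omega'}$, using (2) together with the nestedness of the subspaces $F_i^++F_j^-$, or argue combinatorially: reduce to covering relations in the entrywise order on the plane partitions $R(\omega)$ and realize each minimal degeneration $R(\omega')\rightsquigarrow R(\omega)$ by an explicit one-parameter family of subspaces inside $\overline{\Xorbit_{\omega'}}$ degenerating $[\omega']$ to $[\omega]$, then chain these along saturated chains. The combinatorial description of the cover relations established in Section \ref{section-2.2} should make the second route effective.
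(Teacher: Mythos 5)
Your plan matches the paper's proof essentially step for step: reduction to $B_K$-orbits on $\Grass(V,r)$ (the paper's Lemma \ref{L3.1}), Gaussian elimination to the partial-permutation normal form (Lemmas \ref{L3.2}--\ref{L3.3}), $B_K$-invariance plus injectivity of the rank matrix $R(\omega)$ for parts (1) and (3), a stabilizer/Lie-algebra equation count for part (2), and, for part (4), semicontinuity in the easy direction and chained explicit one-parameter degenerations realizing the elementary moves of Figure \ref{figure1} in the hard direction (Lemmas \ref{L3.4} and \ref{L3.6}). The only substantive point your sketch elides is that the simultaneous two-block elimination is not automatic: after normalizing the top block, the column operations needed to clean the bottom block can destroy it, and one must first reorder the pivot columns so that the right $B_r^+$-action is absorbed into the left $B_p^+$-action ($\tau w B_r^+\subset B_p^+\tau w$, the paper's Lemma \ref{L3.3}).
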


As a complement of this result, we determine the cover relations in the poset $(\{\overline{\Xorbit_\omega}\},\subset)$. We say that $\Xorbit_{\omega'}$ covers $\Xorbit_\omega$ if $\overline{\Xorbit_{\omega'}}$ strictly contains $\overline{\Xorbit_\omega}$ and is minimal (among the orbit closures) for this property.
Equivalently, this means that $\overline{\Xorbit_\omega}$ is an irreducible component of the boundary
$\partial\Xorbit_{\omega'}=\overline{\Xorbit_{\omega'}}\setminus \Xorbit_{\omega'}$.

\begin{theorem}
\label{C1}
The following conditions are equivalent:
\begin{enumerate}
\item $\Xorbit_{\omega'}$ covers $\Xorbit_\omega$;
\item $\dim\Xorbit_{\omega'}=\dim\Xorbit_\omega+1$ and (the graph of) $\omega$ is obtained from (the graph of) $\omega'$ by modifying the pattern of at most four vertices $a^+,b^+,c^-,d^-$ ($a<b$, $c<d$), according to one of the cases indicated in Figure \ref{figure1}.
\end{enumerate}

As a consequence, the boundary of every non-closed orbit is equidimensional of codimenison one.
\begin{figure}[h]
\begin{tabular}{||c|c|c|c|c||}
\hline {\rm Case 1:} & {\rm Case 2:} & {\rm Case 3:} & {\rm Case 4:} & {\rm Case 5:} \\
& & & &  \\[-2mm]
\coverA$\rightsquigarrow\ $\coverB\ &
\begin{tabular}{c}\coverC $\rightsquigarrow$ \coverD \\[6mm] {\rm or} \\[3mm]
\coverE$\rightsquigarrow\ $\coverF \end{tabular} &
\begin{tabular}{c}\coverG $\rightsquigarrow$ \coverH \\[6mm] {\rm or} \\[3mm]
\coverI$\rightsquigarrow\ $\coverJ \end{tabular} &
\begin{tabular}{c}\coverK $\rightsquigarrow$ \coverL \\[6mm] {\rm or} \\[3mm]
\coverM$\rightsquigarrow\ $\coverN \end{tabular} &
\begin{tabular}{c}\coverO $\rightsquigarrow$ \coverP \\[6mm] {\rm or} \\[3mm]
\coverQ$\rightsquigarrow\ $\coverR \end{tabular} \\[20mm] \hline
\end{tabular}
\caption{Elementary moves yielding cover relations in the poset $(\{\overline{\Xorbit_\omega}\},\subset)$.}
\label{figure1}
\end{figure}
\end{theorem}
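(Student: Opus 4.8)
The plan is to deduce Theorem \ref{C1} as a combinatorial consequence of Theorem \ref{T1}, which already furnishes the closure order via the inequalities $r_{i,j}(\omega)\geq r_{i,j}(\omega')$ on the rank matrices $R(\omega)$. First I would reformulate a cover relation purely in terms of these matrices: $\Xorbit_{\omega'}$ covers $\Xorbit_\omega$ exactly when $R(\omega)\geq R(\omega')$ entrywise, $R(\omega)\neq R(\omega')$, and there is no intermediate $\omega''\in\parameters$ with $R(\omega)\geq R(\omega'')\geq R(\omega')$ strictly between them. Using part (2) of Theorem \ref{T1}, the additional dimension-drop-by-one condition in item (1) of Theorem \ref{C1} must be shown equivalent to this minimality, so the key is to understand how each of the five elementary moves in Figure \ref{figure1} affects both the rank matrix $R(\omega)$ and the dimension statistic $\tfrac{b(b+1)}{2}+a^++a^-+c$.

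Next I would carry out the forward direction move by move. For each of the five cases (and each sub-case ``or'' alternative) I would compute the change in $R$ and verify two things: that passing from $\omega'$ to $\omega$ increases exactly the right block of entries $r_{i,j}$ by exactly one while leaving all others fixed, and that the dimension increases by exactly $1$. The dimension bookkeeping is the routine part: an edge contributes to $b$, a mark raises degree to $2$ and thereby creates or destroys inversions counted by $a^\pm$, and reconnecting edges changes the crossing number $c$; in each listed move the net effect on $a^++a^-+\tfrac{b(b+1)}{2}+c$ should be precisely $+1$. I would organize this as a short table paralleling Figure \ref{figure1}, checking that each local modification of the four vertices $a^+,b^+,c^-,d^-$ changes $R$ minimally.

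The harder direction is the converse: that \emph{every} cover relation arises from one of the listed moves. Here the plan is to argue that if $\Xorbit_{\omega'}$ covers $\Xorbit_\omega$, then $R(\omega)$ and $R(\omega')$ differ in a ``minimal'' way, and that any such minimal difference of two rank matrices of partial-permutation type is realized by one of the local moves. Concretely, I would consider the region where $R(\omega)-R(\omega')$ is positive and show, using the fact that both matrices come from genuine graphs in $\parameters$ (so their entries satisfy the monotonicity and step constraints inherited from the partial-permutation structure), that a nonminimal difference can always be factored through an intermediate $\omega''$, contradicting the cover property. This reduces the possible elementary differences to a finite list, which I would then match against Figure \ref{figure1}. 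The main obstacle I anticipate is precisely this exhaustiveness step: ruling out the existence of an intermediate orbit requires a clean characterization of which $(p+1)\times(q+1)$ integer matrices are of the form $R(\omega)$ (a lattice-path/plane-partition type condition, as hinted in Example \ref{E2.1}), together with an argument that any admissible strict chain $R(\omega)>R(\omega'')>R(\omega')$ can be refined whenever the two endpoints differ by more than one elementary move.

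Finally, the equidimensionality statement (codimension one boundary) would follow immediately once the equivalence is established: by part (2) of Theorem \ref{T1} every cover raises the dimension by exactly $1$, so each irreducible component $\overline{\Xorbit_\omega}$ of the boundary $\partial\Xorbit_{\omega'}$ has codimension one in $\overline{\Xorbit_{\omega'}}$, giving the claimed equidimensionality.
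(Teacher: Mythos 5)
Your plan founders on the dimension bookkeeping in the forward direction: it is not true that each of the five listed moves changes $a^+(\omega)+a^-(\omega)+\frac{b(\omega)(b(\omega)+1)}{2}+c(\omega)$ by exactly $1$. The effect of a move on these statistics depends on the global configuration of the graph, not just on the local pattern at the four vertices $a^+,b^+,c^-,d^-$. For instance, in Case 5 moving a mark from $b^+$ to $a^+$ changes $a^+(\omega)$ by an amount depending on the degrees of all the other positive vertices: with $p=3$ and a single mark at $3^+$ moved to $1^+$, the dimension drops by $2$. In Case 1, uncrossing the edges $(1^+,3^-)$, $(3^+,1^-)$ in the presence of a third edge $(2^+,2^-)$ drops the crossing number $c$ by $3$. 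This is precisely why the theorem states $\dim\Xorbit_{\omega'}=\dim\Xorbit_\omega+1$ as a separate hypothesis in item (2) rather than as a consequence of the move --- the paper even remarks (Example \ref{E2.4}\,(b)) that in Case 5 the relevant vertices must be consecutive to get a cover. The error propagates: your route to ``cover $\Rightarrow$ dimension difference $1$'' was to go through ``cover $\Rightarrow$ elementary move'' and then the bookkeeping table, and with the bookkeeping claim false you are left with no proof that covers drop dimension by exactly one. The paper obtains this geometrically, not combinatorially: by Lemma \ref{L3.1} the problem transfers to $B_K$-orbits on $\Grass(V,r)$, and Lemma \ref{L3.7} invokes the general fact (due to Timashev) that for a connected solvable group the boundary of any non-closed orbit is equidimensional of codimension $1$; this same fact, not the cover classification, is also the paper's source for the final equidimensionality assertion, whereas your last paragraph derives it circularly from the equivalence you have not yet established.

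Apart from this, your architecture does match the paper's. Reading covers off the rank matrices via Theorem \ref{T1}\,(4) is legitimate (that theorem precedes this one and its proof is independent), and corresponds to the paper's use of Lemmas \ref{L3.4} and \ref{L3.5}; your ``refine any strict chain'' step is exactly the paper's interpolation Lemma \ref{L3.6} (for $\omega\prec\omega''$ there exists $\omega'$ in between such that one of the adjacent pairs is an elementary move), which combined with minimality of a cover forces the cover itself to be a single move. But you only name this step without executing it; the paper proves it by induction on $p+q$, removing a common edge or mark when one exists and otherwise performing an explicit surgery on the edge sets $E(\omega)$, $E(\omega'')$ --- it never needs the characterization of which matrices arise as $R(\omega)$ that you propose to develop. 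So the proposal has two genuine gaps: the false local dimension count (fatal as written, and not repairable without proving something equivalent to Lemma \ref{L3.7}, e.g.\ that any move with dimension drop at least $2$ admits an intermediate parameter), and the unexecuted interpolation argument that carries most of the combinatorial weight.
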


It follows from Theorem \ref{C1} that the boundary $\partial\Xorbit_\omega:=\overline{\Xorbit_\omega}\setminus \Xorbit_\omega$ of every non-closed orbit is equidimensional of codimension one in $\overline{\Xorbit_\omega}$.
This boundary is in general not irreducible as it already appears in Example \ref{E2.4}\,{\rm (a)}.

\begin{example}
\label{E2.4}
(a)
In Figure \ref{figure2}, we represent the elements $\omega\in\parameters$ (under the form of their graphic incarnations $\mathcal{G}(\omega)$) in the case where $p=q=r=2$. We indicate the dimensions of the corresponding $K$-orbits $\Xorbit_\omega$. An edge joining two parameters indicates a cover relation.

\begin{figure}[h]
\textcolor{blue}{\xymatrixcolsep{1pc}
\xymatrix{
\textcolor{darkgray}{\dim:6}
& & & & & \graphA \ar@/_/@{-}[lld] \ar@{-}[d] \ar@/^/@{-}[rrd]  & & & & & \\
\textcolor{darkgray}{5} & & & \graphB \ar@{-}[lld] \ar@{-}[d] \ar@{-}[rrd] & & \graphC \ar@/_/@{-}[lllld] \ar@{-}[lld] \ar@{-}[rrd] \ar@/^/@{-}[rrrrd]  & & \graphD \ar@{-}[lld] \ar@{-}[d] \ar@{-}[rrd] & & & \\
\textcolor{darkgray}{4} & \graphE \ar@{-}[rd]\ar@/_/@{-}[rrrd] & & \graphF \ar@{-}[ld]\ar@/_/@{-}[rrrd] & & \graphG \ar@/_/@{-}[ld]\ar@/^/@{-}[rd] & & \graphH \ar@{-}[rd]\ar@/^/@{-}[llld] & & \graphI \ar@{-}[ld]\ar@/^/@{-}[llld] & \\
\textcolor{darkgray}{3} & & \graphJ \ar@{-}[rd]\ar@/_/@{-}[rrrd] & & \graphK \ar@{-}[rd] & & \graphL \ar@{-}[ld] & & \graphM \ar@{-}[ld]\ar@/^/@{-}[llld] & & \\
\textcolor{darkgray}{2} & & & \graphN & & \graphO & & \graphP & & &}}
\caption{The parameters of the $K$-orbits of $\Xfv$ and the cover relations for $p=q=r=2$.}
\label{figure2}
\end{figure}

(b) In Figure \ref{figure1}, the vertices $a^+,b^+$ or $c^-,d^-$ involved in an elementary move that yields a cover relation are not necessarily consecutive. For example, $\Xorbit_{\omega'}$ covers $\Xorbit_\omega$
if
\[
\mathcal{G}(\omega')=\graphnA
\qquad\qquad
\mathcal{G}(\omega)=\graphnB
\]

\bigskip
\noindent
This corresponds to Case 1 of Figure \ref{figure1} with $a=c=1$, $b=d=3$.
Note however that in Case 5 of Figure \ref{figure1}, the vertices $a^+,b^+$ (resp. $c^-,d^-$) must be consecutive for having a cover relation.
\end{example}

The proofs of Theorems \ref{T1} and \ref{C1} are given in Section \ref{section-3}.

One ingredient for showing the parametrization of the orbits in Theorem \ref{T1}\,(1) is that 
the orbits of a pair of Borel subgroups $B_p^+\times B_r^+\subset\GL_p(\C)\times\GL_r(\C)$ on the space of $p\times r$ matrices are parametrized by partial permutations
(Lemma \ref{L3.2}). 
This classification of orbits is also shown in \cite{Fulton-1991}, where dimension formulas, closure relations, and properties of closures of orbits are described.

\subsection{Description of symmetrized and exotic Steinberg maps}

\label{section-2.3}

We turn our attention to the maps $\Phi_\fk:\Xfv/K\to \nilpotentsof{\fk}/K$ and $\Phi_\fs:\Xfv/K\to\nilpotentsof{\fs}/K$ defined in Section \ref{section-1.2}.
The three orbit sets arising here can be parametrized combinatorially.
\begin{itemize}
\item $\Xfv/K=\{\Xorbit_\omega:\omega\in\parameters\}$ (see Section \ref{section-2.2}).
\end{itemize}
For the other two orbit sets, the parametrization is well known (see, e.g., \cite{Collingwood-McGovern}):
\begin{itemize}
\item $\nilpotentsof{\fk}$ is the nilpotent cone of the Lie algebra
$$\fk=\left\{\begin{pmatrix} a & 0 \\ 0 &
d\end{pmatrix}:a\in\gl_p(\C),\
d\in\gl_q(\C)\right\}\cong\gl_p(\C)\times\gl_q(\C),$$ 
and its adjoint
$K$-orbits $\Nkorbit_{(\lambda,\mu)}$ are parametrized by pairs of
partitions $\lambda\vdash p$ and $\mu\vdash q$ (viewed as Young
diagrams) through Jordan normal form. Specifically, the number of
boxes in the first $k$ columns of $\lambda$ (resp., $\mu$) indicates
the dimension of $\ker a^k$ (resp., $\ker d^k$).
\item $\nilpotentsof{\fs}$ is the nilpotent cone of
$$\fs=\left\{x=\begin{pmatrix} 0 & b \\ c &
0\end{pmatrix}:b\in\Mat_{p,q}(\C),\ c\in\Mat_{q,p}(\C)\right\},$$ 
and
its adjoint $K$-orbits $\Nsorbit_{\Lambda}$ are parametrized by
signed Young diagrams $\Lambda$ of signature $(p,q)$. Specifically,
the number of $+$'s (resp., $-$'s) in the first $k$ columns of
$\Lambda$ indicates the dimension of $V^+\cap\ker x^k$ (resp.,
$V^-\cap\ker x^k$) for $x\in\Nsorbit_\Lambda$.

\end{itemize}

We give a combinatorial algorithm which describes $\Phi_\fk$ and $\Phi_\fs$ completely. If $w:S\to R$ is a bijection
between two sets of integers, let $(\RS_1(w),\RS_2(w))$ denote the
pair of Young tableaux associated to $w$ via the Robinson--Schensted
correspondence, so that the set of entries of $\RS_1(w)$ (resp.,
$\RS_2(w)$) is $R$ (resp., $S$) (see, e.g., \cite{Fulton}).



Let $\omega\in\parameters$, and let $I,L,L',J,M,M',\sigma$ be the corresponding data in the sense of Section \ref{section-2.1}. Thus we have partitions $I\sqcup L\sqcup L'=\{1,\ldots,p\}$, $J\sqcup M\sqcup M'=\{1,\ldots,q\}$, and $\sigma:J\to I$ is a bijection.
We write $I=\{i_1<\ldots<i_k\}$, $J=\{j_1<\ldots<j_k\}$,
$L=\{\ell_1<\ldots<\ell_s\}$, $L'=\{\ell'_1<\ldots<\ell'_{s'}\}$, $M=\{m_1<\ldots<m_t\}$, $M'=\{m'_1<\ldots<m'_{t'}\}$,
and we consider the following permutations
\begin{eqnarray}
\label{wk+} & w_{\fk,+}=\begin{pmatrix} 1 & \cdots & s & s+1 & \cdots & s+k & s+k+1 & \cdots & p \\
\ell_s & \cdots & \ell_1 & \sigma(j_1) & \cdots & \sigma(j_k) & \ell'_{s'} & \cdots & \ell'_1\end{pmatrix}\in\permutationsof{p}, \\
\label{wk-} & w_{\fk,-}=\begin{pmatrix} 1 & \cdots & t & t+1 & \cdots & t+k & t+k+1 & \cdots & q \\
m_t & \cdots & m_1 & \sigma^{-1}(i_1) & \cdots & \sigma^{-1}(i_k) & m'_{t'} & \cdots & m'_1\end{pmatrix}\in\permutationsof{q}, 
\end{eqnarray}
and the bijections
\begin{eqnarray}
\label{ws+} & w_{\fs,+} = \begin{pmatrix} m_1 & \cdots & m_t & j_1 & \cdots & j_k & q+1 & \cdots & q+s' \\
-1 & \cdots & -t & \sigma(j_1) & \cdots & \sigma(j_k) & \ell'_{s'} & \cdots & \ell'_1 \end{pmatrix},
\end{eqnarray}
which maps $J\cup M\cup\{q+1,\ldots,q+s'\}$ to $\{-t,\ldots,-1\}\cup I\cup L'$, and
\begin{eqnarray}
\label{ws-} & w_{\fs,-} = \begin{pmatrix} \ell_1 & \cdots & \ell_s & i_1 & \cdots & i_k & p+1 & \cdots & p+t' \\
-1 & \cdots & -s & \sigma^{-1}(i_1) & \cdots & \sigma^{-1}(i_k) & m'_{t'} & \cdots & m'_1 \end{pmatrix},
\end{eqnarray}
which maps $I\cup L\cup\{p+1,\ldots,p+t'\}$ to $\{-s,\ldots,-1\}\cup J\cup M'$.

In the next theorem,
 $\nrboxes{\lambda}{c}$ denotes the number of boxes in the first $c$ columns of a Young diagram $\lambda$, and $\nrplus{\Lambda}{c}$ (resp., $\nrminus{\Lambda}{c}$) denotes the number of $+$'s (resp., $-$'s) in the first $c$ columns of a signed Young diagram $\Lambda$.

\begin{theorem}
\label{T2}
Let $\omega\in\parameters$, and consider the above notation.
\begin{enumerate}
\item
The image of $\Xorbit_\omega$ by the symmetrized Steinberg map is  $\Phi_\fk(\Xorbit_\omega)=\Nkorbit_{\lambda,\mu}$ where $(\lambda,\mu)$ is the pair of Young diagrams given by
\[
(\lambda,\mu)=\big(
\shape(\RS_1(w_{\fk,+})),\shape(\RS_1(w_{\fk,-}))\big).
\]
\item The image of $\Xorbit_\omega$ by the exotic Steinberg map is $\Phi_\fs(\Xorbit_\omega)=\Nsorbit_\Lambda$ where $\Lambda$ is the signed Young diagram determined as follows:
\begin{enumerate}
\item For every $c\geq 1$ even,
\[
\nrplus{\Lambda}{c}=\nrboxes{\lambda}{c}\quad\mbox{and}\quad \nrminus{\Lambda}{c}=\nrboxes{\mu}{c},
\]
where $(\lambda,\mu)$ is the pair of Young diagrams given in part {\rm (1)}.
\item For every $c\geq 1$ odd,
\[
\nrplus{\Lambda}{c}=s-t+\nrboxes{\lambda'}{c}\quad\mbox{and}\quad \nrminus{\Lambda}{c}=t-s+\nrboxes{\mu'}{c},
\]
where $(\lambda',\mu')$ is the pair of Young diagrams given by
\[(\lambda',\mu')=\big(\shape(\RS_1(w_{\fs,+})),\shape(\RS_1(w_{\fs,-}))\big).\]
\end{enumerate}
\end{enumerate}
\end{theorem}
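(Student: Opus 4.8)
The plan is to compute both Steinberg maps by exhibiting a generic element of the conormal bundle $T^*_{\Xorbit_\omega}\Xfv$ and reading off its invariants. Since $T^*_{\Xorbit_\omega}\Xfv$ is a $K$-homogeneous vector bundle over the irreducible orbit $\Xorbit_\omega$, its images under $\phi_\fk$ and $\phi_\fs$ are irreducible, so $\Phi_\fk(\Xorbit_\omega)$ and $\Phi_\fs(\Xorbit_\omega)$ are the $K$-orbits of $x_\fk$ and $x_\fs$ for a generic $x$ in the fiber of $\conormal$ over the base point $([\omega],\flag_0^+,\flag_0^-)$. Writing $x=\begin{pmatrix} a & b\\ c & d\end{pmatrix}$ and $W=[\omega]$, and using the spanning set $\{e_i+e_{p+j}:\sigma(j)=i\}\cup\{e_i:i\in L\}\cup\{e_{p+j}:j\in M\}$ of $W$ read off from $\omega$, I would translate the defining conditions $\mathrm{Im}\,x\subset W\subset\ker x$ and $x_\fk\in\nil(\flag_0^+)\times\nil(\flag_0^-)$ into explicit linear constraints: $a,d$ strictly upper triangular; the rows indexed by $L'$ of $a,b$ and by $M'$ of $c,d$ vanish; the columns indexed by $L$ of $a,c$ and by $M$ of $b,d$ vanish; and the edge identifications $c_{j,\bullet}=a_{\sigma(j),\bullet}$, $d_{j,\bullet}=b_{\sigma(j),\bullet}$ for $j\in J$ together with $a_{\bullet,i}=-b_{\bullet,\sigma^{-1}(i)}$, $c_{\bullet,i}=-d_{\bullet,\sigma^{-1}(i)}$ for $i\in I$. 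A bookkeeping of these equations yields the generic zero pattern of each of the four blocks.

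For part (1), these constraints show that the generic $a$ is a strictly upper triangular nilpotent whose entry $a_{i,i'}$ is free exactly when $i<i'$ and $w_{\fk,+}^{-1}(i)<w_{\fk,+}^{-1}(i')$, where $w_{\fk,+}$ is the permutation of \eqref{wk+}: the reversed listings of $L$ and $L'$ and the $\sigma$-ordering of $I$ are precisely what record this pattern, the columns indexed by $L$ (marks) and the rows indexed by $L'$ (free vertices) being forced to sit at the head, resp.\ the tail, of any Jordan chain, while the $I\times I$ block is governed by $\sigma$ through its coupling to the strictly upper triangular block $d$. The generic Jordan type of a nilpotent with such a ``permutation support'' is $\shape(\RS_1(w_{\fk,+}))$; I would obtain this from Greene's theorem by identifying, for every $c$, the number $\dim\ker a^c$ with the maximal cardinality of a union of $c$ decreasing subsequences of $w_{\fk,+}$. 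By Greene's theorem the latter is $\nrboxes{\shape(\RS_1(w_{\fk,+}))}{c}$, which forces $\lambda=\shape(\RS_1(w_{\fk,+}))$. The identical argument for $d$ gives $\mu=\shape(\RS_1(w_{\fk,-}))$, proving (1).

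Part (2)(a) then follows almost formally. Because $\mathrm{Im}\,x\subset\ker x$ we have $x^2=0$, whence $bc=-a^2$ and $cb=-d^2$; therefore $V^+\cap\ker x_\fs^{2m}=\ker(bc)^m=\ker a^{2m}$ and $V^-\cap\ker x_\fs^{2m}=\ker(cb)^m=\ker d^{2m}$, so that $\nrplus{\Lambda}{2m}=\nrboxes{\lambda}{2m}$ and $\nrminus{\Lambda}{2m}=\nrboxes{\mu}{2m}$ by part (1).

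For the odd columns in part (2)(b) I would use $x_\fs^{2m+1}|_{V^+}=c(bc)^m=(-1)^m d^{2m}c$, so that $\nrplus{\Lambda}{2m+1}=\dim\ker(d^{2m}c)$ is the corank of the generic composite $d^{2m}c:V^+\to V^-$. Reading off the generic supports of $c$ and of the powers of $d$ from the constraints above, this composite is a generic map whose zero pattern, after the reordering encoded in the bijection $w_{\fs,+}$ of \eqref{ws+}, is again of permutation type; the symbols $-1,\dots,-t$ and the padding columns $q+1,\dots,q+s'$ record the rows killed by the marks $M$ and the free columns $L'$. Applying the same Greene/Robinson--Schensted description then gives $\nrplus{\Lambda}{2m+1}=s-t+\nrboxes{\lambda'}{2m+1}$, the shift $s-t$ arising from the difference of the two padding sizes, and the symmetric computation on $V^-$ gives the $-$ part with $w_{\fs,-}$ and $\mu'$. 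The main obstacle is exactly this last identification: one must verify that the generic zero pattern of the mixed composite $d^{2m}c$---rather than of an honest nilpotent as in part (1)---coincides, after the reordering of \eqref{ws+}, with a permutation support to which Greene's theorem applies, and that the resulting coranks match $s-t+\nrboxes{\lambda'}{2m+1}$ simultaneously for all odd $c$. Establishing this support lemma for the mixed maps, while correctly accounting for the sign symbols and paddings, is the delicate technical heart of the proof.
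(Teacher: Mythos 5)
Your treatment of parts (1) and (2)(a) is correct and essentially the paper's own route: your list of linear constraints on $(a,b,c,d)$ is Lemma \ref{L4.2}, your identification of the generic block $a$ with a generic element of $\fn_p^+\cap({}^{w_{\fk,+}}\fn_p^+)$ is Lemma \ref{L4.5}, and invoking Greene's theorem at that point merely reproves Steinberg's theorem, which the paper cites as Theorem \ref{T-Steinberg}; likewise $x^2=0$ giving $(x_\fs)^{2m}=(-1)^m(x_\fk)^{2m}$ is Lemma \ref{L4.8}, and your ``symmetric computation'' for the $-$'s is the involution $\omega\mapsto\omega^*$ of Lemma \ref{L4.3}. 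The genuine gap is in (2)(b), and you have flagged it yourself: after correctly reducing to $\nrplus{\Lambda}{2m+1}=\dim\ker d^{2m}c$, you assert that this composite is, after the reordering of (\ref{ws+}), ``a generic map of permutation type,'' and you leave this support lemma unproved. As stated it is not even the right mechanism: $d$ and $c$ are not independent generic matrices with prescribed zero patterns --- the conormal constraints force $\subm{d}{\intq,J}=-\subm{c}{\intq,I}\sigma$ and $\subm{d}{\intq,M}=0$, so the blocks of $d$ entering the product are functions of $c$ --- and in general the rank behavior of a product of patterned matrices is not governed by the zero pattern of the product, so Greene-type genericity cannot be applied directly to $d^{2m}c$.

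What actually closes this gap in the paper is a two-step algebraic substitute. First (inside Lemma \ref{L4.8}) the coupling is exploited to eliminate $d$ altogether: $(-1)^\ell d^\ell c=(c\tau)^\ell c$ for all $\ell$, where $\tau$ is the extension by zero of $\sigma$ from (\ref{zeta-tau}); thus all odd powers become universal expressions in the single free block $\gamma=\subm{c}{J\cup M,\,I\cup L'}$ and the fixed partial permutation. Second (Lemma \ref{L4.9}), the space $\Gamma$ of admissible blocks $\gamma$ is carried by an extension-by-zero isomorphism $\eta_2$ onto $\{z\in\Mat_{S,R}(\C):wz\in\fn_R^+\cap({}^{w}\fn_S^+)\}$ with $w=w_{\fs,+}$, and under this isomorphism $(\gamma\varsigma)^{2m}\gamma$ corresponds to $(zw)^{2m}z=w^{-1}(wz)^{2m+1}$. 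So the mixed composite is realized as an odd power of an honest generic nilpotent $wz$, to which the classical Steinberg theorem (Proposition \ref{P4.6}) applies, and the bookkeeping $\dim\ker d^{2m}c=\card{L}+\dim\ker(wz)^{2m+1}-\card{R'}=s-t+\nrboxes{\lambda'}{2m+1}$ is exactly where your padding symbols $-1,\ldots,-t$ and $q+1,\ldots,q+s'$ earn their keep, as the extra zero columns and rows of $z$. Without this reduction --- or a proof of your support lemma of comparable substance --- the proposal stops precisely at the crux of the theorem.
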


We prove this theorem in Section \ref{section-4}.

\begin{example}
(a) For $\omega$ as in Example \ref{E2.1}, we have $s=t=1$,
\begin{eqnarray*}
 & w_{\fk,+}=\begin{pmatrix} 1 & 2 & 3 & 4 & 5 \\ 5 & 4 & 2 & 3 & 1 \end{pmatrix},\qquad w_{\fk,-}=\begin{pmatrix} 1 & 2 & 3 \\ 2 & 3 & 1 \end{pmatrix}, \\
 & w_{\fs,+}=\begin{pmatrix} 1 & 2 & 3 & 4 & 5 \\ 4 & -1 & 2 & 3 & 1 \end{pmatrix},\quad\mbox{and}\quad w_{\fs,-}=\begin{pmatrix} 2 & 4 & 5 \\ 3 & 1 & -1 \end{pmatrix}, \\
\end{eqnarray*}
hence we get
\[\lambda=\mbox{\scriptsize $\yng(2,1,1,1)$},\quad \mu=\mbox{\scriptsize $\yng(2,1)$},\quad\lambda'=\mbox{\scriptsize $\yng(3,1,1)$},\quad\mu'=\mbox{\scriptsize $\yng(1,1,1)$},\quad \mbox{and}\quad\Lambda=\mbox{\scriptsize $\young(-+,-+,+,+,+,-)$}.\]

(b) In Figure \ref{figure3}, we calculate the pair of Young diagrams $(\lambda,\mu)$ and the signed Young diagram $\Lambda$ such that $\Phi_\fk(\Xorbit_\omega)=\Nkorbit_{\lambda,\mu}$ and $\Phi_\fs(\Xorbit_\omega)=\Nsorbit_\Lambda$ for all $\omega\in\parameters$, for $p=q=r=2$ (the same case as in Example \ref{E2.4}).
\begin{figure}[h]
\begin{tabular}{|c|c|c|c|c|c|c|c|c|}
\hline
\begin{tabular}[b]{c} \ \\[0.1cm] $\mathcal{G}(\omega)$ \end{tabular} & \graphA & \ \ \graphB\ \  & \graphC & \graphD & \graphE & \graphF & \graphG & \graphH \\
\hline
& & & & & & & & \\[-4mm]
$\lambda,\mu$ & {\scriptsize $\yng(1,1),\yng(1,1)$} & {\scriptsize $\yng(1,1),\yng(1,1)$} & {\scriptsize $\yng(2),\yng(2)$}  & {\scriptsize $\yng(1,1),\yng(1,1)$} & {\scriptsize $\yng(1,1),\yng(2)$} & {\scriptsize $\yng(2),\yng(1,1)$} & {\scriptsize $\yng(1,1),\yng(1,1)$} & {\scriptsize $\yng(1,1),\yng(2)$} \\[4mm]
$\Lambda$ & {\scriptsize $\young(+,+,-,-)$} & {\scriptsize $\young(+-,+,-)$} & {\scriptsize $\young(+-,-+)$}  & {\scriptsize $\young(-+,+,-)$} & {\scriptsize $\young(+-,+,-)$} & {\scriptsize $\young(+-,+,-)$} & {\scriptsize $\young(+-,-+)$} & {\scriptsize $\young(-+,+,-)$} \\
\hline
\end{tabular}

\medskip

\begin{tabular}{|c|c|c|c|c|c|c|c|c|}
\hline
\begin{tabular}[b]{c} \ \\[0.1cm] $\mathcal{G}(\omega)$ \end{tabular} & \graphI & \graphJ & \graphK & \graphL & \graphM & \graphN & \graphO & \graphP \\
\hline
& & & & & & & & \\[-4mm]
$\lambda,\mu$ & {\scriptsize $\yng(2),\yng(1,1)$} & {\scriptsize $\yng(2),\yng(2)$} & {\scriptsize $\yng(1,1),\yng(2)$}  & {\scriptsize $\yng(2),\yng(1,1)$} & {\scriptsize $\yng(2),\yng(2)$} & {\scriptsize $\yng(1,1),\yng(1,1)$} & {\scriptsize $\yng(2),\yng(2)$} & {\scriptsize $\yng(1,1),\yng(1,1)$} \\[4mm]
$\Lambda$ & {\scriptsize $\young(-+,+,-)$} & {\scriptsize $\young(+-,+-)$} & {\scriptsize $\young(+-,-+)$}  & {\scriptsize $\young(+-,-+)$} & {\scriptsize $\young(-+,-+)$} & {\scriptsize $\young(+-,+-)$} & {\scriptsize $\young(+-,-+)$} & {\scriptsize $\young(-+,-+)$} \\[2mm]
\hline
\end{tabular}
\caption{Calculation of $\Phi_\fk(\Xorbit_\omega)=\Nkorbit_{\lambda,\mu}$ and $\Phi_\fs(\Xorbit_\omega)=\Nsorbit_\Lambda$ for $p=q=r=2$.}
\label{figure3}
\end{figure}
\end{example}

\begin{remark}
\label{R2.7}
Note that the pair of bijections $(w_{\fs,+},w_{\fs,-})$ of (\ref{ws+})--(\ref{ws-}) determines the original element $\omega\in\parameters$,
as the data $(I,J,L,L',M,M',\sigma)$ can be recovered from this pair.
On the contrary, the pair $(w_{\fk,+},w_{\fk,-})$ of (\ref{wk+})--(\ref{wk-}) does not determine $\omega$. For instance,
for the elements $\omega$ corresponding to the two graphs
\[
\graphC\qquad\mbox{and}\qquad\graphO
\]
we get the same pair of permutations $(w_{\fk,+},w_{\fk,-})=(\mathrm{id}_{\{1,2\}},\mathrm{id}_{\{1,2\}})$.
\end{remark}

The tableaux $\RS_1(w_{\fk,+})$ and $\RS_1(w_{\fk,-})$ involved in Theorem \ref{T2} can also be obtained as the result of the following combinatorial algorithms.
We need more notation:
\begin{itemize}
\item
If $T,S$ are Young tableaux with disjoint sets of entries, we denote by $T*S$ the rectification by jeu de taquin of the skew tableau obtained by displaying $S$ on the top right corner of $T$. For example,
\[
\young(13,6)*\young(245,7)=\mathrm{Rect}\left(\young(::245,::7,13,6)\right)=\young(1245,37,6).
\]
If $U$ is a third tableau whose entries do not appear in $T$ nor $S$, the properties of jeu de taquin imply that
$(T*S)*U=T*(S*U)$ (see \cite{Fulton}), hence the notation $T*S*U$ is unambiguous.
\item
Let $[L],[L'],[M],[M']$ denote the vertical Young tableaux whose entries are the elements in $L,L',M,M'$, respectively.
\end{itemize}
We then have:
\begin{equation}
\label{wk+star}
\RS_1(w_{\fk,+})=[L]*\RS_1(\sigma)*[L']
\end{equation}
and
\begin{equation}
\label{wk-star}
\RS_1(w_{\fk,-})=[M]*\RS_1(\sigma^{-1})*[M']=[M]*\RS_2(\sigma)*[M'].
\end{equation}

\begin{remark}
Assume that $p=q=r=n$ and $L=M'=\emptyset$, thus $s'=t=n-k$.
This special case is the one considered in \cite[\S9--10]{FN2} (except that the set $L$ in the notation of \cite[\S9--10]{FN2} corresponds to the set $L'$ in the notation of the present paper).
In this case:
\begin{enumerate}
\item The tableaux $\RS_1(w_{\fk,+})$ and $\RS_1(w_{\fk,-})$ coincide with the tableaux
$\RS_1(\sigma)*[L']$ and $[M]*\RS_2(\sigma)$
involved in \cite[Theorems 7.4, 9.1, and 10.4\,(1)]{FN2}.
\item The skew tableau obtained from $\RS_1(w_{\fs,+})$ by deleting the boxes with negative entries coincides with the skew tableau $[M]*\RS_2(\sigma)\bigtriangleup
\RS_1(\sigma)*[L']$ involved in \cite[Theorem 10.4\,(2)]{FN2}.
This follows from \cite[Lemma 10.9]{FN2}.
\item We have just $w_{\fs,-}=\sigma^{-1}$, hence $\RS_1(w_{\fs,-})=\RS_2(\sigma)$, which is the tableau involved in \cite[Theorem 10.4\,(3)]{FN2}.
\end{enumerate}
Thus, Theorem \ref{T2} recovers the results stated in \cite[Theorems 9.1 and 10.4]{FN2}.
\end{remark}

\subsection{An extension of the Robinson--Schensted correspondence}

\label{section-2.4}

As pointed out in Remark \ref{R2.7}, the pair of permutations $(w_{\fk,+},w_{\fk,-})$ of (\ref{wk+})--(\ref{wk-}), involved in the calculation of the symmetrized Steinberg map image $\Phi_\fk(\omega)$,
does not fully determine the element $\omega\in\parameters$.
A fortiori the map $\Phi_\fk$ itself is far from being injective.

In fact, we can determine the fibers of $\Phi_\fk$ in terms of a combinatorial correspondence which extends the Robinson--Schensted correspondence. The following theorem also generalizes \cite[Theorem 7.6]{FN2}.
We use the previous notation.
In addition, we write $\lambda'\columnstrip \lambda$ whenever $\lambda'$ is a Young subdiagram of $\lambda$ such that the skew diagram $\lambda\setminus\lambda'$ is {\em column strip} (i.e., it contains at most one box in each row).
Hereafter, $\partitionsof{n}$ denotes the set of partitions $\lambda\vdash n$, also seen as Young diagrams of size $|\lambda|=n$.

\begin{theorem}
\label{T3}
There is an explicit bijection 
\[
\mathrm{gRS}:\parameters\stackrel{\sim}{\longrightarrow}\mathcal{T}:=\bigsqcup_{(\lambda,\mu)\in\partitionsof{p}\times\partitionsof{q}} \mathcal{T}_{\lambda,\mu}
\]
where $\mathcal{T}_{\lambda,\mu}$ is the set of 5-tuples
$(T_1,T_2;\lambda',\mu';\nu)$ satisfying
\begin{itemize}
\item[$(\star)$] $T_1$ and $T_2$ are standard Young tableaux of shapes $\lambda$ and $\mu$, respectively;
\item[$(\star\star)$] $\nu\columnstrip\lambda'\columnstrip\lambda$,
$\nu\columnstrip\mu'\columnstrip\mu$, and $|\lambda'|+|\mu'|=|\nu|+r$.
\end{itemize}
Specifically, to the element $\omega\in\parameters$, we associate
the 5-tuple
\begin{eqnarray*}
\mathrm{gRS}(\omega)=(T_1,T_2;\lambda',\mu';\nu)&:=&\big([L]*\RS_1(\sigma)*[L'],[M]*\RS_2(\sigma)*[M']; \\
&&\shape([L]*\RS_1(\sigma)),\shape([M]*\RS_2(\sigma)); \\
&&\shape(\RS_1(\sigma))\big).
\end{eqnarray*}
\end{theorem}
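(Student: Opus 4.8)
The plan is to construct an explicit inverse to $\genRS$ and verify that the two maps are mutually inverse, the whole argument resting on the bijectivity of the Robinson--Schensted correspondence together with a ``unique factorization'' lemma for the jeu de taquin product $*$ by a single column. First I would check that $\genRS$ is well defined, i.e. that $\genRS(\omega)\in\mathcal{T}_{\lambda,\mu}$. Since $\sigma\colon J\to I$ is a bijection, $\RS_1(\sigma)$ and $\RS_2(\sigma)$ are standard of a common shape $\nu$ with $|\nu|=\card{I}=\card{J}$, and rectifying a standard skew filling produces a standard tableau; hence $T_1,T_2$ are standard of shapes $\lambda\vdash p$ and $\mu\vdash q$, giving $(\star)$. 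For $(\star\star)$ the one nontrivial point is that $*$-multiplication by a single column adjoins a column strip, which is the vertical ($e_m$) Pieri rule; this yields $\nu\columnstrip\lambda'\columnstrip\lambda$ and $\nu\columnstrip\mu'\columnstrip\mu$. The cardinality identity $|\lambda'|+|\mu'|=|\nu|+r$ then follows from $|\lambda'|=|\nu|+\card{L}$, $|\mu'|=|\nu|+\card{M}$ together with $\card{I}+\card{L}+\card{M}=r$ (the $r$ columns of $\omega$ each contribute either an edge or a mark).

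The core of the proof is the following lemma, which I would deduce from the theory of jeu de taquin (as in \cite{Fulton}) and the multiplicity-freeness of the vertical Pieri rule, i.e. $c^{\lambda}_{\lambda',(1^m)}\in\{0,1\}$: \emph{if $T$ is standard of shape $\lambda$ and $\lambda'\subseteq\lambda$ with $\lambda\setminus\lambda'$ a column strip, then there is a unique factorization $T=Q*[X]$ with $Q$ standard of shape $\lambda'$ and $[X]$ a single column.} Existence comes from $c^{\lambda}_{\lambda',(1^m)}=1$ when $\lambda\setminus\lambda'$ is a column strip, and uniqueness from $c^{\lambda}_{\lambda',(1^m)}\le 1$. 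The analogous \emph{left} factorization $T=[X]*Q$ I would obtain from the right version via the reverse-complement anti-automorphism of the plactic monoid (reverse the reading word and complement the alphabet): this is an anti-automorphism, it preserves straight and skew shapes, it sends columns to columns, and it preserves the column-strip condition, so it converts left factorizations into right ones.

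Given the lemma, the inverse map reads as follows. Starting from $(T_1,T_2;\lambda',\mu';\nu)\in\mathcal{T}_{\lambda,\mu}$, I right-factor $T_1=Q_1*[L']$ at shape $\lambda'$ (legitimate since $\lambda\setminus\lambda'$ is a column strip), then left-factor $Q_1=[L]*P_1$ at shape $\nu$; symmetrically $T_2=Q_2*[M']$ and $Q_2=[M]*P_2$. Here $P_1,P_2$ are standard of the common shape $\nu$ with entry sets $I,J$, so $\sigma:=\RS^{-1}(P_1,P_2)\colon J\to I$ is a well-defined bijection, and $L,L',M,M'$ are the entry sets of the peeled columns. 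Reassembling $\omega$ from $(I,L,L',J,M,M',\sigma)$, the rank of $\omega$ is $\card{I}+\card{L}+\card{M}=|\nu|+(|\lambda'|-|\nu|)+(|\mu'|-|\nu|)=|\lambda'|+|\mu'|-|\nu|=r$ by $(\star\star)$, so $\omega\in\parameters$.

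Finally I would verify the two compositions. For $\genRS^{-1}\circ\genRS=\mathrm{id}$, associativity gives $T_1=([L]*\RS_1(\sigma))*[L']$ with $\shape([L]*\RS_1(\sigma))=\lambda'$, so uniqueness in the lemma forces the inverse to peel off exactly $[L']$ and then $[L]$, recovering $P_1=\RS_1(\sigma)$ and $P_2=\RS_2(\sigma)$; bijectivity of Robinson--Schensted then returns $\sigma$, hence $\omega$. For $\genRS\circ\genRS^{-1}=\mathrm{id}$, the existence part of the lemma ensures that the factorizations produced by $\genRS^{-1}$ multiply back to the given $T_1,T_2$ with the prescribed intermediate shapes $\lambda',\mu',\nu$, while $\RS_1(\sigma)=P_1$ and $\RS_2(\sigma)=P_2$, so $\genRS$ returns the original $5$-tuple. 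The main obstacle is precisely the unique factorization lemma: reconciling the single-column peelings from the right (for $[L'],[M']$) with those from the left (for $[L],[M]$) under the prescribed intermediate shapes, and making the left version rigorous through the reverse-complement symmetry. Once the lemma is established (equivalently, once the relevant vertical Pieri coefficient is shown to be exactly $1$), the bijection, its inverse, and the verification that they are mutually inverse all follow formally from associativity of $*$ and the bijectivity of the Robinson--Schensted correspondence.
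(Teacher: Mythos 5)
Your proposal is correct and follows essentially the same route as the paper: the paper likewise proves bijectivity by uniquely factoring $T_1=[L]*\RS_1(\sigma)*[L']$ and $T_2=[M]*\RS_2(\sigma)*[M']$ with the prescribed intermediate shapes $\lambda',\mu',\nu$ and then invoking the bijectivity of Robinson--Schensted, the only difference being that your unique-factorization lemma is exactly the Proposition in \cite[\S 1.1]{Fulton} on row-inserting strictly decreasing sequences (which gives uniqueness of the peeled entry sets directly --- a point your Pieri-coefficient count $c^{\lambda}_{\lambda',(1^m)}\leq 1$ alone leaves slightly informal, since that coefficient counts factorizations with standardized alphabets), and the paper simply cites it and applies it twice, its column-insertion dual covering what you handle via the reverse-complement anti-automorphism. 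Your well-definedness check, including $|\lambda'|+|\mu'|=(s+k)+(t+k)=|\nu|+r$, matches the paper's as well.
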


By combining Theorems \ref{T2}\,(a) and \ref{T3}, we get a commutative diagram
\[
\xymatrix  @M 3pt @R 8ex{
\Xfv / K \simeq \parameters
\ar[drr]_{\Phi_\fk} \ar@{->}[rr]^-{\sim}_-{\genRS} & & \bigsqcup\limits_{(\lambda, \mu) \in \partitionsof{p} \times \partitionsof{q}}
\mathcal{T}_{\lambda,\mu}
\ar[d] &
\makebox[3ex][c]{\qquad$\ni (T_1, T_2; \lambda', \mu'; \nu)$} \ar@{|->}[d]\\
& & \partitionsof{p} \times \partitionsof{q} & \makebox[0ex][c]{$\ni \quad (\lambda, \mu) $\qquad} }
\]
from which we have that $\genRS$ restricts to a bijection 
$\Phi_\fk^{-1}(\lambda,\mu)\stackrel{\sim}{\to} \mathcal{T}_{\lambda,\mu}$.

\begin{proof}
First, we note that the considered map is well defined:
the fact that $\lambda\setminus\lambda'$, $\lambda'\setminus\nu$, $\mu\setminus\mu'$, and $\mu'\setminus\nu$ are column strips follows from \cite[Proposition in \S1.1]{Fulton},
and we have
\[
|\lambda'|+|\mu'|=(s+k)+(t+k)=k+(k+s+t)=|\nu|+r
\]
where, as before, $s=\card{L}$, $t=\card{M}$, $k=\card{I}=\card{J}$.

Next, we show that the map is bijective.
Let $(T_1,T_2;\lambda',\mu';\nu)\in\mathcal{T}_{\lambda,\mu}$.
Applying twice \cite[Proposition in \S1.1]{Fulton}, we find that there is a unique 6-tuple
$(S_1,S_2,L,L',M,M')$, where $S_1,S_2$ are Young tableaux of shape $\nu$
and $L,L',M,M'$ are sets of integers, such that $T_1=[L]*S_1*[L']$, $T_2=[M]*S_2*[M']$,
$\lambda'=\shape([L]*S_1)$, and $\mu'=\shape([M]*S_2)$
(understanding that the contents of $L,S_1,L'$ are disjoint, as well as those of $M,S_2,M'$).
Let $I$ (resp., $J$) be the set of entries of $S_1$ (resp., $S_2$).
By the Robinson--Schensted correspondence, we get $(S_1,S_2)=(\RS_1(\sigma),\RS_2(\sigma))$
for a unique bijection $\sigma:J\to I$. Then, the data
$I,L,L',J,M,M',\sigma$ determine a unique element $\omega\in\parameters$ (see Section \ref{section-2.1}) such that $\mathrm{gRS}(\omega)=(T_1,T_2;\lambda',\mu';\nu)$.
\end{proof}

\begin{example}
(a)
The 5-tuple corresponding to the element $\omega$ of Example \ref{E2.1} is
\[
\genRS(\omega)=(T_1,T_2;\lambda',\mu';\nu)=
\left(
{\tiny \ytableaushort{13,2,4,5}, \ytableaushort{13,2} ;
\ydiagram{1,1,1}, \ydiagram{2,1}; \ydiagram{1,1} }
\right)=
\left(
{\tiny \ytableaushort{13,2,4,5} *[*(red)]{1,1} *[*(yellow)]{1,1,1}, \ytableaushort{13,2} *[*(red)]{1,1} *[*(yellow)]{2,1} }
\right)
\]
(we encode the 5-tuple as the pair of tableaux $(T_1,T_2)$
where the boxes of $\lambda',\mu',\nu$ are colored).

(b)
In Figure \ref{newfigure}, we give the bijection of Theorem \ref{T3} in the case where $p=3$, $q=r=2$.
In this case, the set $\parameters$ has 34 elements.
\begin{figure}[t]
\begin{tabular}{|c|c|c|c|c|c|c|}
\hline
$\omega$ & \graphexA & \graphexK \\[3mm]
\hline 
&&\\[-1mm]
$\genRS(\omega)$ &
{\tiny \ytableaushort{123} *[*(red)]{2}, \ytableaushort{12} *[*(red)]{2} } &
{\tiny \ytableaushort{123} *[*(red)]{1} *[*(yellow)]{2}, \ytableaushort{12} *[*(red)]{1} }  \\[3mm]
\hline
\end{tabular}
\qquad
\begin{tabular}{|c|c|c|c|c|c|c|}
\hline
\graphexM \\[3mm] \hline 
\\[-3mm]
{\tiny \ytableaushort{123} *[*(red)]{1} *[*(yellow)]{2}, \ytableaushort{1,2} *[*(red)]{1} } \\[3mm]
\hline
\end{tabular}

\medskip

\begin{tabular}{|c|c|c|c|c|c|c|}
\hline
\graphexE & \graphexC & \graphexH & \graphexG & \graphexO \\[3mm]
\hline
&&&& \\[-3mm]
{\tiny \ytableaushort{12,3} *[*(red)]{2}, \ytableaushort{12} *[*(red)]{2} }
&
{\tiny \ytableaushort{13,2} *[*(red)]{2}, \ytableaushort{12} *[*(red)]{2} }
&
{\tiny \ytableaushort{12,3} *[*(red)]{1} *[*(yellow)]{1,1}, \ytableaushort{12} *[*(red)]{1} }
&
{\tiny \ytableaushort{13,2} *[*(red)]{1} *[*(yellow)]{1,1}, \ytableaushort{12} *[*(red)]{1} }
&
{\tiny \ytableaushort{12,3} *[*(red)]{1} *[*(yellow)]{2}, \ytableaushort{12} *[*(red)]{1} } \\[3mm]
\hline \cline{1-5}
\graphexP & \graphexT & \graphexU & \graphexBB & \graphexDD \\[3mm]
\cline{1-5} &&&& \\[-3mm]
{\tiny \ytableaushort{13,2} *[*(red)]{1} *[*(yellow)]{2}, \ytableaushort{12} *[*(red)]{1} }
&
{\tiny \ytableaushort{12,3} *[*(red)]{1}, \ytableaushort{12} *[*(red)]{1} *[*(yellow)]{2} }
&
{\tiny \ytableaushort{13,2} *[*(red)]{1}, \ytableaushort{12} *[*(red)]{1} *[*(yellow)]{2} }
&
{\tiny \ytableaushort{12,3} *[*(yellow)]{1}, \ytableaushort{12} *[*(yellow)]{1} }
&
{\tiny \ytableaushort{13,2} *[*(yellow)]{1}, \ytableaushort{12} *[*(yellow)]{1} }
\\[3mm]
\cline{1-5}
\end{tabular}

\medskip

\begin{tabular}{|c|c|c|c|c|c|c|}
\hline
\graphexF & \graphexB & \graphexJ & \graphexI & \graphexQ & \graphexR
\\[3mm]
\hline
&&&&& \\[-3mm]
{\tiny \ytableaushort{12,3} *[*(red)]{1,1}, \ytableaushort{1,2} *[*(red)]{1,1} }
&
{\tiny \ytableaushort{13,2} *[*(red)]{1,1}, \ytableaushort{1,2} *[*(red)]{1,1} }
&
{\tiny \ytableaushort{12,3} *[*(red)]{1} *[*(yellow)]{1,1}, \ytableaushort{1,2} *[*(red)]{1} }
&
{\tiny \ytableaushort{13,2} *[*(red)]{1} *[*(yellow)]{1,1}, \ytableaushort{1,2} *[*(red)]{1} }
&
{\tiny \ytableaushort{12,3} *[*(red)]{1} *[*(yellow)]{2}, \ytableaushort{1,2} *[*(red)]{1} }
&
{\tiny \ytableaushort{13,2} *[*(red)]{1} *[*(yellow)]{2}, \ytableaushort{1,2} *[*(red)]{1} } 
\\[3mm]
\hline
\cline{1-6}
\graphexS & \graphexV & \graphexZ & \graphexY & \graphexCC & \graphexEE \\[3mm]
\cline{1-6}
&&&&& \\[-3mm]
{\tiny \ytableaushort{12,3} *[*(red)]{1}, \ytableaushort{1,2} *[*(red)]{1} *[*(yellow)]{1,1} }
&
{\tiny \ytableaushort{13,2} *[*(red)]{1}, \ytableaushort{1,2} *[*(red)]{1} *[*(yellow)]{1,1} }
&
{\tiny \ytableaushort{12,3} *[*(yellow)]{1,1}, \ytableaushort{1,2} }
&
{\tiny \ytableaushort{13,2} *[*(yellow)]{1,1}, \ytableaushort{1,2} }
&
{\tiny \ytableaushort{12,3} *[*(yellow)]{1}, \ytableaushort{1,2} *[*(yellow)]{1} }
&
{\tiny \ytableaushort{13,2} *[*(yellow)]{1}, \ytableaushort{1,2} *[*(yellow)]{1} }
\\[3mm]
\cline{1-6}
\end{tabular}

\medskip

\begin{tabular}{|c|c|c|c|c|}
\hline
\graphexL & \graphexX & \graphexFF \\[3mm]
\hline
{\tiny \ytableaushort{1,2,3} *[*(red)]{1} *[*(yellow)]{1,1}, \ytableaushort{12} *[*(red)]{1} }
&
{\tiny \ytableaushort{1,2,3} *[*(red)]{1}, \ytableaushort{12} *[*(red)]{1} *[*(yellow)]{2} }
&
{\tiny \ytableaushort{1,2,3} *[*(yellow)]{1}, \ytableaushort{12} *[*(yellow)]{1} }
\\[4mm]
\hline
\end{tabular}

\medskip

\begin{tabular}{|c|c|c|c|c|c|}
\hline
\graphexD & \graphexN & \graphexW & \graphexAA & \graphexGG & \graphexHH  \\[3mm]
\hline
{\tiny \ytableaushort{1,2,3} *[*(red)]{1,1}, \ytableaushort{1,2} *[*(red)]{1,1} }
&
{\tiny \ytableaushort{1,2,3} *[*(red)]{1} *[*(yellow)]{1,1}, \ytableaushort{1,2} *[*(red)]{1} }
&
{\tiny \ytableaushort{1,2,3} *[*(red)]{1}, \ytableaushort{1,2} *[*(red)]{1} *[*(yellow)]{1,1} }
&
{\tiny \ytableaushort{1,2,3} *[*(yellow)]{1,1}, \ytableaushort{1,2} }
&
{\tiny \ytableaushort{1,2,3} *[*(yellow)]{1}, \ytableaushort{1,2} *[*(yellow)]{1} }
&
{\tiny \ytableaushort{1,2,3}, \ytableaushort{1,2} *[*(yellow)]{1,1} }
\\[4mm]
\hline
\end{tabular}
\caption{The correspondence $\omega\mapsto(T_1,T_2;\lambda',\mu';\nu)$ in the case $(p,q,r)=(3,2,2)$.}
\label{newfigure}
\end{figure}
\end{example}

\begin{remark}
We point out that Singh \cite{Singh} has recently developed a Robinson--Schensted correspondence for partial permutations.
Specifically, he has established a bijection between the set of partial permutations of size $p\times q$
and a set of triples $(\Lambda,P,Q)$ consisting of a signed Young diagram of size $p+q$ and two standard Young tableaux of sizes $p$ and $q$.
Note that, if $r=q$, the set of partial permutations of size $p\times q$ can be realized in a natural way as a subset of our set $\parameters$. One can ask whether there is a relation between the correspondence in Theorem \ref{T3} and
the bijection given in \cite[Theorem A]{Singh}.

We refer to \cite{Rosso,Trapa-IMRN,Travkin} for other generalizations of the Robinson--Schensted correspondence
that arise in geometry.
\end{remark}


We derive from Theorem \ref{T3} an interpretation of the cardinals of the fibers $\Phi_\fk^{-1}(\Nkorbit_{\lambda,\mu})$
based on representation theory.
If $\lambda\in\partitionsof{n}$, let $\rho_\lambda^{(n)}$ denote the corresponding irreducible representation of $\permutationsof{n}$.
In this way, the (isomorphism classes of) irreducible representations
$\rho_\lambda^{(p)}\boxtimes \rho_\mu^{(q)}$
of $\permutationsof{p}\times\permutationsof{q}$ are parametrized by
the pairs of partitions $(\lambda,\mu)\in\partitionsof{p}\times\partitionsof{q}$.
Here $\boxtimes$ stands for the outer tensor product.

For every triple of nonnegative integers $(k,s,t)$ such that
\begin{equation}
\label{kst}
s':=p-k-s\geq 0,\quad t':=q-k-t\geq 0,\quad k+s+t=r,
\end{equation}
we consider the subgroup of $\permutationsof{p}\times \permutationsof{q}$ given by
\begin{eqnarray*}
H_{k,s,t} & = & \{(a_1,a_2,a_3;b_1,b_2,b_3)\in(\permutationsof{k}\times\permutationsof{s}\times \permutationsof{s'})\times
(\permutationsof{k}\times\permutationsof{t}\times \permutationsof{t'}):a_1=b_1\} \\
 & \cong & \Delta\permutationsof{k}\times \permutationsof{s}\times \permutationsof{s'}\times \permutationsof{t}\times \permutationsof{t'},
\end{eqnarray*}
where $\Delta\permutationsof{k}$ stands for the diagonal embedding of $\permutationsof{k}$ in $\permutationsof{k}\times\permutationsof{k}$.
Let $\varepsilon$ denote the signature representation of $H_{k,s,t}$ (the restriction of $\rho_{(1^p)}^{(p)}\boxtimes\rho_{(1^q)}^{(q)}$).
The induced representation $\mathrm{Ind}_{H_{k,s,t}}^{\permutationsof{p}\times\permutationsof{q}}\varepsilon$ decomposes as
a  sum of irreducible representations
\[
\mathrm{Ind}_{H_{k,s,t}}^{\permutationsof{p}\times\permutationsof{q}}\varepsilon=\bigoplus_{(\lambda,\mu)\in\partitionsof{p}\times\partitionsof{q}} (\rho_\lambda^{(p)}\boxtimes \rho_\mu^{(q)})^{m_{k,s,t}(\lambda,\mu)},
\]
where $m_{k,s,t}(\lambda,\mu)$ denotes the multiplicity of $\rho_\lambda^{(p)}\boxtimes \rho_\mu^{(q)}$.
The next corollary is a generalization of \cite[Corollary 7.10]{FN2}.

\begin{corollary}
For every pair of partitions $(\lambda,\mu)\in\partitionsof{p}\times\partitionsof{q}$, we have
\[\card\Phi_\fk^{-1}(\Nkorbit_{\lambda,\mu})=\sum_{(k,s,t)}m_{k,s,t}(\lambda,\mu)\dim \rho_\lambda^{(p)}\boxtimes \rho_\mu^{(q)},\]
where the sum is over triples $(k,s,t)$ satisfying (\ref{kst}).
\end{corollary}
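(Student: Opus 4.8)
The plan is to combine the bijection $\genRS$ of Theorem \ref{T3} with a Frobenius-characteristic computation of the multiplicities $m_{k,s,t}(\lambda,\mu)$, and then to match the two indexings term by term. First, the bijection $\Phi_\fk^{-1}(\Nkorbit_{\lambda,\mu})\cong\mathcal{T}_{\lambda,\mu}$ obtained from Theorem \ref{T3} gives $\card\Phi_\fk^{-1}(\Nkorbit_{\lambda,\mu})=\card\mathcal{T}_{\lambda,\mu}$. Since in a $5$-tuple $(T_1,T_2;\lambda',\mu';\nu)$ the tableaux $T_1,T_2$ may be chosen independently of the partitions $(\lambda',\mu',\nu)$, the set $\mathcal{T}_{\lambda,\mu}$ factors as a product, so that
\[
\card\mathcal{T}_{\lambda,\mu}=f^\lambda f^\mu\cdot N(\lambda,\mu),
\]
where $f^\lambda$ (resp.\ $f^\mu$) is the number of standard Young tableaux of shape $\lambda$ (resp.\ $\mu$) and $N(\lambda,\mu)$ is the number of triples $(\lambda',\mu',\nu)$ satisfying condition $(\star\star)$. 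By the classical theory $f^\lambda=\dim\rho_\lambda^{(p)}$ and $f^\mu=\dim\rho_\mu^{(q)}$, so $f^\lambda f^\mu=\dim\rho_\lambda^{(p)}\boxtimes\rho_\mu^{(q)}$; as this dimension is independent of $(k,s,t)$, the corollary reduces to the purely combinatorial identity $N(\lambda,\mu)=\sum_{(k,s,t)}m_{k,s,t}(\lambda,\mu)$, and the remaining task is to compute the multiplicities.

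Second, I would evaluate $m_{k,s,t}(\lambda,\mu)$ by induction in stages along the chain $H_{k,s,t}\subset(\permutationsof{k}\times\permutationsof{s}\times\permutationsof{s'})\times(\permutationsof{k}\times\permutationsof{t}\times\permutationsof{t'})\subset\permutationsof{p}\times\permutationsof{q}$. Writing $\varepsilon$ explicitly, the constraint $a_1=b_1$ forces the two sign characters of the diagonal $\permutationsof{k}$ to cancel, so $\varepsilon$ is trivial on $\Delta\permutationsof{k}$ and equals the sign character on each of the factors $\permutationsof{s},\permutationsof{s'},\permutationsof{t},\permutationsof{t'}$. Inducing the diagonal piece first, the regular-bimodule decomposition $\mathrm{Ind}_{\Delta\permutationsof{k}}^{\permutationsof{k}\times\permutationsof{k}}\mathbf{1}=\bigoplus_{\nu\vdash k}\rho_\nu^{(k)}\boxtimes\rho_\nu^{(k)}$ (using self-duality of symmetric-group representations) introduces a common partition $\nu$; then I would induce the two Young subgroups up to $\permutationsof{p}$ and $\permutationsof{q}$ separately. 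Passing to the Frobenius characteristic, where the induction product becomes multiplication and $\mathrm{sgn}_m\mapsto e_m$, the $p$-side contributes the coefficient of $s_\lambda$ in $s_\nu\,e_s\,e_{s'}$ and the $q$-side the coefficient of $s_\mu$ in $s_\nu\,e_t\,e_{t'}$. Two applications of the Pieri rule for multiplication by an elementary symmetric function (which adds at most one box per row, i.e.\ the column-strip condition of $(\star\star)$) show these coefficients count the intermediate shapes $\lambda'$ with $\nu\columnstrip\lambda'\columnstrip\lambda$ and $\mu'$ with $\nu\columnstrip\mu'\columnstrip\mu$, respectively. Hence
\[
m_{k,s,t}(\lambda,\mu)=\sum_{\nu\vdash k}\#\{\lambda':\nu\columnstrip\lambda'\columnstrip\lambda\}\cdot\#\{\mu':\nu\columnstrip\mu'\columnstrip\mu\},
\]
where implicitly $|\lambda'|=k+s$ and $|\mu'|=k+t$.

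Finally, I would sum over all $(k,s,t)$ satisfying (\ref{kst}) and read off the correspondence with triples. Given a triple $(\lambda',\mu',\nu)$ counted by $N(\lambda,\mu)$, set $k=|\nu|$, $s=|\lambda'|-|\nu|$, $t=|\mu'|-|\nu|$; then $s'=|\lambda|-|\lambda'|\geq 0$ and $t'=|\mu|-|\mu'|\geq 0$ follow automatically from $\lambda'\subseteq\lambda$ and $\mu'\subseteq\mu$, while the size equality $|\lambda'|+|\mu'|=|\nu|+r$ in $(\star\star)$ becomes exactly $k+s+t=r$. Thus $(k,s,t)$ satisfies (\ref{kst}), and conversely every contribution to $\sum_{(k,s,t)}m_{k,s,t}(\lambda,\mu)$ arises in this way, giving $\sum_{(k,s,t)}m_{k,s,t}(\lambda,\mu)=N(\lambda,\mu)$ and completing the proof. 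The only genuinely delicate point is this last bookkeeping: verifying that the inequalities $s',t'\geq 0$ and the single equality $k+s+t=r$ defining (\ref{kst}) match precisely the containments and the size equality in $(\star\star)$, so that the representation-theoretic sum and the combinatorial count agree term by term.
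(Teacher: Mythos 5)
Your proposal is correct and follows essentially the same route as the paper: the same induction in stages through $\tilde{H}_{k,s,t}$, the same regular-bimodule decomposition of $\mathrm{Ind}_{\Delta\permutationsof{k}}^{\permutationsof{k}\times\permutationsof{k}}\mathbf{1}$ introducing the common partition $\nu$, and the same double application of the Pieri rule (your Frobenius-characteristic phrasing with $e_m$ is just a reformulation of the paper's direct citation of Fulton), yielding the identical formula $m_{k,s,t}(\lambda,\mu)=\card\{(\nu,\lambda',\mu'):\nu\columnstrip\lambda'\columnstrip\lambda,\ \nu\columnstrip\mu'\columnstrip\mu\}$. Your explicit final bookkeeping matching $(\star\star)$ with (\ref{kst}) is a correct elaboration of the step the paper compresses into ``the claimed equality now follows from Theorem \ref{T3}.''
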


\begin{proof}
Note that $\varepsilon=\mathbf{1}\boxtimes\varepsilon'$, where $\mathbf{1}$ is the trivial representation of $\Delta\permutationsof{k}$ and $\varepsilon'$ is the signature representation of $\permutationsof{s}\times\permutationsof{s'}\times\permutationsof{t}\times\permutationsof{t'}$.
Let
\[
\tilde{H}_{k,s,t}=(\permutationsof{k}\times\permutationsof{s}\times \permutationsof{s'})\times
(\permutationsof{k}\times\permutationsof{t}\times \permutationsof{t'}).
\]
The intermediate induced representation $\mathrm{Ind}_{H_{k,s,t}}^{\tilde{H}_{k,s,t}}\varepsilon$ can be written as
\begin{eqnarray*}
\mathrm{Ind}_{H_{k,s,t}}^{\tilde{H}_{k,s,t}}\varepsilon
 & = & (\mathrm{Ind}_{\Delta\permutationsof{k}}^{\permutationsof{k}\times\permutationsof{k}}\mathbf{1})\boxtimes\varepsilon'
=\mathbb{C}\permutationsof{k}\boxtimes\rho_{(1^s)}^{(s)}\boxtimes\rho_{(1^{s'})}^{(s')}\boxtimes\rho_{(1^t)}^{(t)}\boxtimes\rho_{(1^{t'})}^{(t')} \\
 & = & \bigoplus_{\nu\in\partitionsof{k}}(\rho_\nu^{(k)}\boxtimes\rho_{(1^s)}^{(s)}\boxtimes\rho_{(1^{s'})}^{(s')})\boxtimes(\rho_\nu^{(k)}\boxtimes\rho_{(1^t)}^{(t)}\boxtimes\rho_{(1^{t'})}^{(t')})^*,
\end{eqnarray*}
where $\mathbb{C}\permutationsof{k}=\bigoplus_{\nu\in\partitionsof{k}}\rho_\nu^{(k)}\boxtimes(\rho_\nu^{(k)})^*$ is the regular representation of $\permutationsof{k}\times\permutationsof{k}$.
Applying twice the Pieri rule (see \cite[\S2.2 and \S7.3]{Fulton}), we have
\[\mathrm{Ind}_{\permutationsof{k}\times\permutationsof{s}\times\permutationsof{s'}}^{\permutationsof{p}}(\rho_\nu^{(k)}\boxtimes\rho_{(1^s)}^{(s)}\boxtimes\rho_{(1^{s'})}^{(s')})=
\bigoplus_{\substack{\lambda'\in\partitionsof{k+s}\\ \mathrm{with}\,\nu\smallcolumnstrip\lambda'}}\bigoplus_{\substack{\lambda\in\partitionsof{p}\\\mathrm{with}\,\lambda'\smallcolumnstrip\lambda}}\rho_\lambda^{(p)}.\]
Since $\mathrm{Ind}_{H_{k,s,t}}^{\permutationsof{p}\times\permutationsof{q}}\varepsilon=
\mathrm{Ind}_{\tilde{H}_{k,s,t}}^{\permutationsof{p}\times\permutationsof{q}}
\mathrm{Ind}_{H_{k,s,t}}^{\tilde{H}_{k,s,t}}\varepsilon$, we deduce that
\[
m_{k,s,t}(\lambda,\mu)=\card\{(\nu,\lambda',\mu')\in\partitionsof{k}\times\partitionsof{k+s}\times\partitionsof{k+t}:\nu\columnstrip\lambda'\columnstrip\lambda,\ \nu\columnstrip\mu'\columnstrip\mu\}
\]
for all triples $(k,s,t)$.
Note also that $\dim\rho_\lambda^{(p)}\boxtimes\rho_\mu^{(q)}$ is the number of pairs of standard Young tableaux $(T_1,T_2)$ of shape $\lambda$ and $\mu$, respectively.
The claimed equality now follows from Theorem \ref{T3}.
\end{proof}

\begin{corollary}
The total number of $K$-orbits in $\Xfv$ is given by
\[\card\Xfv/K=\sum_{(k,s,t)}\dim \mathrm{Ind}_{H_{k,s,t}}^{\permutationsof{p}\times\permutationsof{q}}\varepsilon=\sum_{(k,s,t)}\binom{p}{k,s,s'}\binom{q}{k,t,t'}k!,\]
where the sums are over triples $(k,s,t)$ satisfying (\ref{kst}).
\end{corollary}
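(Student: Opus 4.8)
The plan is to deduce this corollary purely by bookkeeping from the results already established: Theorem~\ref{T1}\,(1) identifies $\card\Xfv/K$ with $\card\parameters$, the preceding corollary computes the fibers of $\Phi_\fk$, and the dimension of the induced representation is then evaluated by an elementary index computation. No new geometric or combinatorial input is needed.

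First I would establish the left-hand equality. By Theorem~\ref{T1}\,(1) the $K$-orbits of $\Xfv$ are in bijection with $\parameters$, so $\card\Xfv/K=\card\parameters$. The symmetrized Steinberg map $\Phi_\fk$ partitions $\parameters$ into the fibers $\Phi_\fk^{-1}(\Nkorbit_{\lambda,\mu})$ indexed by the pairs $(\lambda,\mu)\in\partitionsof{p}\times\partitionsof{q}$, so that
\[
\card\parameters=\sum_{(\lambda,\mu)\in\partitionsof{p}\times\partitionsof{q}}\card\Phi_\fk^{-1}(\Nkorbit_{\lambda,\mu}).
\]
Substituting the value of $\card\Phi_\fk^{-1}(\Nkorbit_{\lambda,\mu})$ supplied by the preceding corollary and interchanging the two finite summations, this becomes
\[
\card\parameters=\sum_{(k,s,t)}\sum_{(\lambda,\mu)}m_{k,s,t}(\lambda,\mu)\dim\rho_\lambda^{(p)}\boxtimes\rho_\mu^{(q)}.
\]
For each fixed admissible triple $(k,s,t)$, the inner sum is exactly $\dim\mathrm{Ind}_{H_{k,s,t}}^{\permutationsof{p}\times\permutationsof{q}}\varepsilon$, read off from the decomposition of this induced representation into irreducibles; this yields the first equality.

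Next I would compute $\dim\mathrm{Ind}_{H_{k,s,t}}^{\permutationsof{p}\times\permutationsof{q}}\varepsilon$ directly. Since $\varepsilon$ is one-dimensional, the dimension of the induced module equals the index $[\permutationsof{p}\times\permutationsof{q}:H_{k,s,t}]$. Using the isomorphism $H_{k,s,t}\cong\Delta\permutationsof{k}\times\permutationsof{s}\times\permutationsof{s'}\times\permutationsof{t}\times\permutationsof{t'}$ recorded in Section~\ref{section-2.4}, one has $\card H_{k,s,t}=k!\,s!\,s'!\,t!\,t'!$, so the index equals $p!\,q!/(k!\,s!\,s'!\,t!\,t'!)$. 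Finally I would match this against the claimed product by expanding the multinomial coefficients:
\[
\binom{p}{k,s,s'}\binom{q}{k,t,t'}k!=\frac{p!}{k!\,s!\,s'!}\cdot\frac{q!}{k!\,t!\,t'!}\cdot k!=\frac{p!\,q!}{k!\,s!\,s'!\,t!\,t'!},
\]
where $k+s+s'=p$ and $k+t+t'=q$ by (\ref{kst}). Summing over the triples satisfying (\ref{kst}) gives the second equality.

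There is no genuine obstacle here; the hard work was done in Theorem~\ref{T3} and the preceding corollary. The only points requiring care are that the fibers of $\Phi_\fk$ genuinely partition $\parameters$ as $(\lambda,\mu)$ ranges over \emph{all} of $\partitionsof{p}\times\partitionsof{q}$ (so that no index is silently dropped), and the correct normalization in the index computation, where the diagonal factor $\Delta\permutationsof{k}$ contributes a single $k!$ to $\card H_{k,s,t}$ rather than $(k!)^2$.
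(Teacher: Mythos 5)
Your proposal is correct, and since the paper states this corollary without any proof, your argument supplies exactly the intended derivation: sum the preceding corollary's fiber formula over all $(\lambda,\mu)\in\partitionsof{p}\times\partitionsof{q}$ (empty fibers contributing zero), recognize the inner sum as $\dim\mathrm{Ind}_{H_{k,s,t}}^{\permutationsof{p}\times\permutationsof{q}}\varepsilon$ from the stated decomposition into irreducibles, and evaluate that dimension as the index $[\permutationsof{p}\times\permutationsof{q}:H_{k,s,t}]=p!\,q!/(k!\,s!\,s'!\,t!\,t'!)$, which matches the multinomial expression. Your two flagged points of care --- that the sum runs over all pairs $(\lambda,\mu)$, and that $\Delta\permutationsof{k}$ contributes a single $k!$ to $\card H_{k,s,t}$ --- are precisely the right ones, and both are handled correctly.
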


\section{On the decomposition of $\Xfv$ into $K$-orbits}

\label{section-3}

The purpose of this section is to prove the results stated in Theorems \ref{T1} and \ref{C1}, regarding the decomposition of $\Xfv=\Grass(V,r)\times\Flags(V^+)\times\Flags(V^-)$ into orbits of $K=\GL(V^+)\times\GL(V^-)$.

By $B_k^+\subset\GL_k(\C)$ we denote the subgroup of invertible upper triangular matrices in $\GL_k(\C)$.
Then\[B_K=\left\{\begin{pmatrix}
a & 0 \\ 0 & d
\end{pmatrix}:a\in B_p^+,\ d\in B_q^+\right\}=B_p^+\times B_q^+\subset\GL(V^+)\times\GL(V^-)\]
is a Borel subgroup of $K$. Recall that $\flag_0^+$ and $\flag_0^-$ denote the standard flags of $V^+$ and $V^-$. Thus, $B_K$ is the stabilizer of the pair $(\flag_0^+,\flag_0^-)$ for the action of $K$ on the product of flag varieties $\Flags(V^+)\times\Flags(V^-)$.
In Section \ref{section-3.1}, we observe that there is a one-to-one correspondence between the orbit sets $\Xfv/K$ and $\Grass(V,r)/B_K$, which preserves the inclusion relations between closures of orbits. This fact is a useful ingredient in the rest of the section.

In Section \ref{section-3.2}, we show the parametrization of orbits and the dimension formula stated in Theorem \ref{T1}\,(1)--(3). In Section \ref{section-3.3}, we describe the closure relations of orbits
by proving Theorems \ref{T1}\,(4) and \ref{C1}.
In Section \ref{section-3.4}, we make further remarks and mention relations with the existing literature.

\subsection{A preliminary lemma}

\label{section-3.1}

We will use the following lemma.

\begin{lemma}
\label{L3.1}
\begin{enumerate}
\item The mapping $\Grass(V,r)\to\Xfv$, $W\mapsto (W,\flag_0^+,\flag_0^-)$ induces a one-to-one correspondence between the orbit sets
\[\Xi:\Grass(V,r)/B_K\to \Xfv/K,\ \Gorbit=B_K\cdot W\mapsto \Xi(\Gorbit)=K\cdot (W,\flag_0^+,\flag_0^-).\]
\item If $\Xorbit=\Xi(\Gorbit)\subset\Xfv$ is the $K$-orbit corresponding to $\Gorbit\subset\Grass(V,r)$, then $\Xorbit\cong K\times^{B_K}\Gorbit$. In particular, $\dim\Xorbit=\dim\Gorbit+\dim K/B_K$.
\item The correspondence preserves the closure relations. Namely, if $\Gorbit_1,\Gorbit_2$ are $B_K$-orbits of $\Grass(V,r)$ and if $\Xorbit_1=\Xi(\Gorbit_1)$ and $\Xorbit_2=\Xi(\Gorbit_2)$ are the corresponding $K$-orbits of $\Xfv$, then
\[\overline{\Gorbit_1}\subset\overline{\Gorbit_2}\iff \overline{\Xorbit_1}\subset\overline{\Xorbit_2}.\]
\end{enumerate}
\end{lemma}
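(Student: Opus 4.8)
The plan is to reduce all three statements to the geometry of the associated fibre bundle $K\times^{B_K}\Grass(V,r)$. First I would note that $\GL(V^+)$ acts transitively on $\Flags(V^+)$ and $\GL(V^-)$ on $\Flags(V^-)$, so $K$ acts transitively on the product $\Flags(V^+)\times\Flags(V^-)$ with stabilizer of the base point $(\flag_0^+,\flag_0^-)$ equal to $B_K$; this gives a $K$-equivariant identification $\Flags(V^+)\times\Flags(V^-)\cong K/B_K$. Consequently the projection $\pi\colon\Xfv\to\Flags(V^+)\times\Flags(V^-)$ onto the last two factors is a $K$-equivariant bundle over $K/B_K$ with fibre $\Grass(V,r)$, and the assignment $[g,W]\mapsto(gW,g\flag_0^+,g\flag_0^-)$ defines a $K$-equivariant isomorphism of varieties $K\times^{B_K}\Grass(V,r)\stackrel{\sim}{\to}\Xfv$ (here $B_K$ acts on $K\times\Grass(V,r)$ by $b\cdot(g,W)=(gb^{-1},bW)$ and $K$ acts by left translation on the first factor).

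Granting this, parts (1) and (2) are formal. For (1), every $K$-orbit meets the fibre $\pi^{-1}(eB_K)=\Grass(V,r)\times\{(\flag_0^+,\flag_0^-)\}$ because $K$ is transitive on the base; and two points $(W_1,\flag_0^+,\flag_0^-)$, $(W_2,\flag_0^+,\flag_0^-)$ of this fibre lie in a common $K$-orbit precisely when $W_2=gW_1$ for some $g$ fixing $(\flag_0^+,\flag_0^-)$, that is for some $g\in B_K$. Thus $\Xi$ is a well-defined bijection. For (2), transporting the correspondence through the isomorphism above identifies $\Xorbit=\Xi(\Gorbit)$ with the subbundle $K\times^{B_K}\Gorbit$, so $\Xorbit\cong K\times^{B_K}\Gorbit$; the dimension formula $\dim\Xorbit=\dim\Gorbit+\dim K/B_K$ is then the (fibre) $+$ (base) count.

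The hard part is part (3), for which I would first establish the closure identity $\overline{\Xorbit}=K\times^{B_K}\overline{\Gorbit}$ (closures taken in $\Xfv$ and in $\Grass(V,r)$, respectively). Let $q\colon K\times\Grass(V,r)\to K\times^{B_K}\Grass(V,r)$ be the quotient map; since the $B_K$-action is free, this is a geometric quotient, so $q$ is submersive (a subset of the target is closed iff its preimage is closed). As $\overline{\Gorbit}$ is $B_K$-stable, the set $K\times\overline{\Gorbit}$ is closed and $B_K$-saturated, whence $q^{-1}(q(K\times\overline{\Gorbit}))=K\times\overline{\Gorbit}$ and $q(K\times\overline{\Gorbit})=K\times^{B_K}\overline{\Gorbit}$ is closed; since it contains $\Xorbit$, this gives $\overline{\Xorbit}\subset K\times^{B_K}\overline{\Gorbit}$. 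The reverse inclusion follows from continuity of $q$, using that $K\times\Gorbit$ is dense in $K\times\overline{\Gorbit}$: $K\times^{B_K}\overline{\Gorbit}=q(\overline{K\times\Gorbit})\subset\overline{q(K\times\Gorbit)}=\overline{\Xorbit}$. Finally, because $q^{-1}(K\times^{B_K}A)=K\times A$ for every $B_K$-stable $A\subset\Grass(V,r)$, one has $K\times^{B_K}\overline{\Gorbit_1}\subset K\times^{B_K}\overline{\Gorbit_2}$ iff $\overline{\Gorbit_1}\subset\overline{\Gorbit_2}$; combined with the closure identity this yields the desired equivalence $\overline{\Xorbit_1}\subset\overline{\Xorbit_2}\iff\overline{\Gorbit_1}\subset\overline{\Gorbit_2}$. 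The main technical care is in the submersivity and saturation argument establishing the closure identity; everything else is bookkeeping through the bundle isomorphism.
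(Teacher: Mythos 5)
Your proof is correct, and for parts (1)--(2) it follows essentially the same route as the paper: the paper also reduces everything to the identification $\Xfv\cong K\times^{B_K}\Grass(V,r)$, via an abstract lemma (Lemma \ref{L3.2-new}) stated for an arbitrary parabolic $Q\subset K$ acting on an arbitrary $K$-variety $X$, and your fibre argument for the bijectivity of $\Xi$ matches its computation $M=\iota^{-1}(\Xi(M))$, $N=\Xi(\iota^{-1}(N))$. The genuine divergence is in part (3). You argue \emph{upstairs}: you use that $q\colon K\times\Grass(V,r)\to K\times^{B_K}\Grass(V,r)$ is submersive and deduce the closure identity $\overline{\Xorbit}=K\times^{B_K}\overline{\Gorbit}$ from the saturation of $K\times\overline{\Gorbit}$. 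The paper argues \emph{downstairs} and never invokes any property of $q$: for $M\subset X$ closed and $Q$-stable it writes $N=K\cdot\iota(M)$ as the image under the second projection of the closed incidence set $\{(kQ,\xi)\in K/Q\times\mathbb{X}:k^{-1}\cdot\xi\in\iota(M)\}$, concludes that $N$ is closed because $K/Q$ is complete, and then extracts $\overline{N}=\Xi(\overline{M})$ formally from the poset isomorphism of part (1). The trade-off is this: your route needs $q$ to actually be submersive, and your justification (``the $B_K$-action is free, so this is a geometric quotient'') is too quick as stated, since freeness alone does not yield a geometric quotient; what makes it work here is that $K\to K/B_K$ is Zariski-locally trivial (parabolic subgroups admit local sections, e.g.\ via the big Bruhat cell), which locally turns $q$ into a projection $U\times B_K\times X\to U\times X$ and hence makes it open. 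The paper's properness trick sidesteps local triviality entirely and uses only completeness of $K/Q$ --- which is precisely where the parabolic hypothesis enters (the paper notes that parts (1)--(2) of its lemma hold for any closed subgroup). With the local-triviality point made explicit, your version is fully rigorous, and both arguments deliver the same closure identity on which the equivalence in (3) rests.
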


Lemma \ref{L3.1} is a consequence of the following lemma (which applies to a general connected reductive group $K$).
As already used in Lemma \ref{L3.1}, $K\times^QX$ stands for the quotient of $K\times X$ by the action of $Q$ given by $q\cdot(k,x)=(kq^{-1},qx)$, and $[k,x]$ denotes the class of $(k,x)$ in this quotient. 

\begin{lemma}
\label{L3.2-new}
Let $K$ be a connected reductive group and let $Q\subset K$ be a parabolic subgroup. Let $X$ be an algebraic variety endowed with an action of $K$. Consider the diagonal action of $K$ on $\mathbb{X}:=K/Q\times X$.
Note that there is an isomorphism  $\chi:\mathbb{X}\to K\times^QX$ given by $\chi(kQ,x)=[k,k^{-1}x]$.
Also we consider the closed immersion $\iota:X\to\mathbb{X}$, $x\mapsto(Q,x)$.
\begin{enumerate}
\item There is a one-to-one correspondence (in fact an isomorphism of partially ordered sets)
\[\Xi:\{\mbox{$Q$-stable subsets $M\subset X$}\}\to\{\mbox{$K$-stable subsets $N\subset\mathbb{X}$}\}\]
given by $\Xi(M)=K\cdot\iota(M)$. The inverse bijection is given by $N\mapsto\iota^{-1}(N)$. Moreover, $\Xi$ restricts to a one-to-one correspondence between the orbit sets $X/Q$ and $\mathbb{X}/K$.
\item Every $Q$-stable subset $M\subset X$ yields a subset $K\times^QM\subset K\times^QX$, and we have $\chi(\Xi(M))=K\times^QM$.
\item Let $N=\Xi(M)$ for some $Q$-stable subset $M$. Then, $M$ is closed in $X$ if and only if $N$ is closed in $\mathbb{X}$. More generally, we have $\overline{N}=\Xi(\overline{M})$.
\end{enumerate}
\end{lemma}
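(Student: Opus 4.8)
The plan is to exploit the $K$-equivariant projection $\pi:\mathbb{X}\to K/Q$, $(kQ,x)\mapsto kQ$, whose fiber over the base point $Q\in K/Q$ is exactly $\iota(X)$. Since the stabilizer of $Q$ in $K$ equals $Q$, the set $\iota(X)$ is $Q$-stable, and because $K$ acts transitively on $K/Q$, every $K$-stable subset of $\mathbb{X}$ is determined by its ``slice'' $N\cap\iota(X)$ over the base point. This single observation drives all three parts: part (1) is the resulting slice/saturation correspondence, part (2) reinterprets it through the associated-bundle isomorphism $\chi$, and part (3) upgrades it to closures.

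For (1), I would check directly that $\Xi$ and $N\mapsto\iota^{-1}(N)$ are mutually inverse. If $M\subset X$ is $Q$-stable, then $(Q,x)\in K\cdot\iota(M)$ forces $x=qm$ with $q\in Q$, $m\in M$, whence $x\in M$ by $Q$-stability; so $\iota^{-1}(\Xi(M))=M$ (the reverse inclusion being trivial). Conversely, if $N$ is $K$-stable and $M=\iota^{-1}(N)$, then any $(kQ,y)\in N$ satisfies $(Q,k^{-1}y)=k^{-1}\cdot(kQ,y)\in N$, so $k^{-1}y\in M$ and $(kQ,y)=k\cdot\iota(k^{-1}y)\in\Xi(M)$, giving $N\subseteq\Xi(M)$; the inclusion $\Xi(M)\subseteq N$ is immediate from $K$-stability, so $\Xi(\iota^{-1}(N))=N$. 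Both maps preserve inclusions, hence $\Xi$ is a poset isomorphism. For the orbit restriction, a short computation gives $\Xi(Q\cdot x_0)=K\cdot(Q,x_0)$ and $\iota^{-1}(K\cdot(Q,x_0))=Q\cdot x_0$; and since any $K$-orbit $N$ has $\pi(N)=K/Q$ (a nonempty $K$-stable subset of the homogeneous space $K/Q$) and therefore meets $\iota(X)$, every $K$-orbit arises this way. Part (2) is then a one-line verification: for $m\in M$ one has $\chi(kQ,km)=[k,k^{-1}(km)]=[k,m]$, so $\chi(\Xi(M))=\{[k,m]:k\in K,\ m\in M\}=K\times^QM$.

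Part (3) is where the geometry enters, and I expect the closedness step to be the main obstacle. The key input is that, $Q$ being parabolic, the quotient $K\to K/Q$ is a Zariski-locally trivial principal $Q$-bundle, so $\chi$ identifies $\mathbb{X}$ with the associated fiber bundle $K\times^QX\to K/Q$ of fiber $X$. Under this identification a closed $Q$-stable subset $\overline{M}\subset X$ produces a closed sub-bundle $K\times^Q\overline{M}$: its preimage in $K\times X$ is the $Q$-saturated closed set $K\times\overline{M}$, and local triviality shows the quotient map sends $Q$-saturated closed sets to closed sets, so $\Xi(\overline{M})=\chi^{-1}(K\times^Q\overline{M})$ is closed and $K$-stable. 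Because $\iota$ is a closed immersion we have $\iota(\overline{M})=\overline{\iota(M)}\subseteq\overline{N}$, and $K$-stability of $\overline{N}$ yields $\Xi(\overline{M})=K\cdot\iota(\overline{M})\subseteq\overline{N}$; conversely $\Xi(\overline{M})$ is a closed $K$-stable set containing $N=\Xi(M)$, so $\overline{N}\subseteq\Xi(\overline{M})$. Hence $\overline{N}=\Xi(\overline{M})$, and in particular $M$ is closed if and only if $N$ is. The only nonformal ingredient is the closedness of the associated sub-bundle, for which the local triviality of $K\to K/Q$ — a consequence of $Q$ being parabolic — is exactly what is required.
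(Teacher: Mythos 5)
Your proof is correct, and parts (1) and (2) are in substance identical to the paper's: the paper performs the same mutual-inverse verification ($M=\iota^{-1}(\Xi(M))$ by forcing $k\in Q$, $N=\Xi(\iota^{-1}(N))$ by translating $(kQ,x)$ back to the base point), and matches orbits by writing $K\cdot(kQ,x)=\Xi(Q\cdot(k^{-1}x))$ directly rather than via your fibration $\pi$ -- a cosmetic difference -- while part (2) is the same one-line computation. The genuine divergence is in part (3), where the two arguments use parabolicity of $Q$ through different consequences. The paper never invokes local triviality: it notes that $\{(k,\xi)\in K\times\mathbb{X}:k^{-1}\cdot\xi\in\iota(M)\}$ is closed and saturated, hence has closed image in $K/Q\times\mathbb{X}$, and then writes $N=\mathrm{pr}_2\bigl(\{(kQ,\xi):k^{-1}\cdot\xi\in\iota(M)\}\bigr)$, concluding by \emph{completeness} of $K/Q$, which makes this projection a closed map. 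You instead use the \emph{Zariski-local triviality} of the principal bundle $K\to K/Q$ (valid for parabolic $Q$ via the big cell), descend the saturated closed set $K\times\overline{M}$ to the closed subbundle $K\times^Q\overline{M}$, and pull back along $\chi$; the concluding sandwich $\overline{N}=\overline{\Xi(M)}\subset\Xi(\overline{M})\subset\overline{N}$ is then the same in both proofs. Each route buys something: the paper's properness argument is more economical, needing only that $K/Q$ is complete and that the quotient $K\to K/Q$ sends saturated closed sets to closed sets (openness of the quotient map, true for any closed subgroup), with no trivialization of the bundle; your argument ties part (3) structurally to part (2) via the identity $\chi(\Xi(\overline{M}))=K\times^Q\overline{M}$, exhibits $\Xi(\overline{M})$ as a closed subbundle rather than merely a closed set, and would survive verbatim for any closed subgroup $Q$ whose quotient admits local sections, whereas the completeness argument is genuinely tied to $Q$ being parabolic. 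One small point worth making explicit in your write-up: $\overline{M}$ is $Q$-stable (immediate, since $Q$ acts by homeomorphisms), a fact both you and the paper use tacitly when applying $\Xi$ and part (2) to $\overline{M}$.
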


Though this lemma is well known, we give a proof for the sake of completeness.

\begin{proof}
(1) The map $\Xi$ is certainly well defined.
For a $Q$-stable subset $M\subset X$,
the inclusion $M\subset\iota^{-1}(K\cdot\iota(M))=\iota^{-1}(\Xi(M))$ is clear, while for $x\in\iota^{-1}(\Xi(M))$ there are $k\in K$ and $y\in M$ such that $\iota(x)=k\cdot\iota(y)$, which means that $(Q,x)=(kQ,ky)$, whence $k\in Q$ and $x=ky\in Q\cdot M=M$. We have shown that $M=\iota^{-1}(\Xi(M))$.

For a $K$-stable subset $N\subset \mathbb{X}$,
given $x\in \iota^{-1}(N)$ and $q\in Q$ we have
\[\iota(qx)=(Q,qx)=q\cdot (Q,x)\in N,\]
hence $qx\in\iota^{-1}(N)$; this shows that $\iota^{-1}(N)$ is $Q$-stable.

Since $N$ is $K$-stable, the inclusion $\Xi(\iota^{-1}(N))=K\cdot \iota(\iota^{-1}(N))\subset N$ is clear, while for $(kQ,x)\in N$ we have
$\iota(k^{-1}x)=(Q,k^{-1}x)=k^{-1}\cdot(kQ,x)\in N$, hence $(kQ,x)\in K\cdot\iota(\iota^{-1}(N))=\Xi(\iota^{-1}(N))$. This shows that $N=\Xi(\iota^{-1}(N))$.

We have thus shown that $\Xi$ is a bijection, with inverse bijection given by $N\mapsto \iota^{-1}(N)$. Note also that the implications
\[(M\subset M'\Rightarrow \Xi(M)\subset\Xi(M'))\quad\mbox{and}\quad (N\subset N'\Rightarrow \iota^{-1}(N)\subset\iota^{-1}(N'))\]
are clear, which show that $\Xi$ is in fact an isomorphism of posets.

Finally, if $M=Q\cdot x$ is a $Q$-orbit, then $\Xi(M)=K\cdot(Q\cdot\iota(x))=K\cdot \iota(x)$ is a $K$-orbit. If $N=K\cdot(kQ,x)$ is a $K$-orbit, then $N=K\cdot(Q,k^{-1}x)=\Xi(Q\cdot(k^{-1}x))$ is the image of a $Q$-orbit. The proof of part (1) is complete.

(2) First we note that, if $M\subset X$ is $Q$-stable, then the quotient $K\times^QM=(K\times M)/Q$ coincides with the subset $\{[k,x]\in K\times^QX:x\in M\}\subset K\times^QX$. This subset can also be written as
\[K\times^QM=\{\chi(kQ,kx):k\in K,\ x\in M\}=\chi(K\cdot\iota(M))=\chi(\Xi(M)).\]

(3) Let $N=\Xi(M)$. If $N$ is closed, then $M=\iota^{-1}(N)$ is closed.
Conversely, assume that $M$ is closed.
Then, $\iota(M)\subset \mathbb{X}$ is closed and $Q$-stable, and this implies that
the set $\{(k,\xi)\in K\times\mathbb{X}:k^{-1}\cdot \xi\in\iota(M)\}$ is closed as well as its image in $K/Q\times\mathbb{X}$. Note that
\[N=K\cdot \iota(M)=\mathrm{pr}_2(\{(kQ,\xi)\in K/Q\times\mathbb{X}:k^{-1}\cdot \xi\in\iota(M)\}).\]
Since $K/Q$ is complete, we conclude that $N$ is closed.


More generally, using that $\Xi(\overline{M})$ and $\Xi^{-1}(\overline{N})$ are closed, and the fact that $\Xi$ is an isomorphism of posets, we get
\[\overline{N}=\overline{\Xi(M)}\subset\Xi(\overline{M})=\Xi(\overline{\Xi^{-1}(N)})\subset\Xi(\Xi^{-1}(\overline{N}))=\overline{N},\]
hence $\overline{N}=\Xi(\overline{M})$, as claimed.\end{proof}

\begin{remark}
In Lemma \ref{L3.2-new}, the assumption that $Q$ is parabolic is used only in part (3).
Also note that the isomorphism $\chi$ is $K$-equivariant when $K$ acts on $\mathbb{X}=K/Q\times X$ diagonally and on $K\times^Q X$ by left multiplication.
\end{remark}

\subsection{Parametrization and dimension formula -- proof of  Theorem \ref{T1}\,(1)--(3)}\label{section-3.2}

As before, we denote by $B_p^+\subset\GL_p(\C)$ (resp., $B_r^+\subset\GL_r(\C)$) the Borel subgroup of upper triangular matrices. We need two lemmas. The first one is an analogue of \cite[Proposition 6.3]{FN2}.
It is also shown in \cite[p.~390]{Fulton-1991}, but we give a proof for the sake of completeness.

\begin{lemma}
\label{L3.2}
Every $p\times r$ matrix can be written in the form
$b_1\tau b_2$
for some $b_1\in B_p^+$, $b_2\in B_r^+$, and a unique $\tau\in\ppermutationsof_{p,r}$.
\end{lemma}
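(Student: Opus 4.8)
The plan is to read the statement as a normal-form result for the action of $B_p^+\times B_r^+$ on the space $\Mat_{p,r}(\C)$ by $(b_1,b_2)\cdot A=b_1^{-1}Ab_2^{-1}$, so that writing $A=b_1\tau b_2$ amounts to reducing $A$ to a partial permutation matrix $\tau\in\ppermutationsof_{p,r}$ by the elementary operations available: left multiplication by $B_p^+$ lets one rescale a row and add a multiple of a row to any row of strictly smaller index (clearing upward), while right multiplication by $B_r^+$ lets one rescale a column and add a multiple of a column to any column of strictly larger index (clearing rightward). I would treat existence and uniqueness separately.

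For existence I would induct on $p$, with $r$ arbitrary. If the bottom row of $A$ is zero, the claim follows at once from the inductive hypothesis applied to the top $(p-1)\times r$ block, lifting the resulting row-transformation by $\mathrm{diag}(\cdot,1)\in B_p^+$ and keeping the same $b_2$. Otherwise, let $j_0$ be the index of the leftmost nonzero entry of the bottom row. First I rescale this entry to $1$; then, adding suitable multiples of column $j_0$ to the columns $j>j_0$ (an operation in $B_r^+$), I clear row $p$ to the right of $j_0$; then, adding suitable multiples of row $p$ to the rows $i<p$ (an operation in $B_p^+$), I clear column $j_0$ above row $p$. At this stage row $p$ equals $e_{j_0}^{\mathsf T}$ and column $j_0$ equals $e_p$, and deleting them leaves a $(p-1)\times(r-1)$ matrix $A''$ to which the inductive hypothesis applies. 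The pivot $(p,j_0)$ together with the $1$'s produced for $A''$ forms a partial permutation matrix, since each of row $p$ and column $j_0$ now carries a single $1$.

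The delicate point—and the step I expect to require the most care—is to check that the elementary operations reducing $A''$ can be performed on the full $p\times r$ matrix without disturbing the isolated pivot, and that the accumulated transformations stay inside $B_p^+$ and $B_r^+$. Here one uses that the row operations for $A''$ only combine rows of $\{1,\dots,p-1\}$ (``add lower to upper''), hence extend to $B_p^+$ and fix row $p$; likewise the column operations only combine columns of $\{1,\dots,r\}\setminus\{j_0\}$ (``add earlier to later'' in the induced order), hence extend to $B_r^+$ and fix column $j_0$; since row $p$ vanishes off $j_0$ and column $j_0$ vanishes off $p$, these extended operations preserve the pivot. Composing all the elementary operations gives the desired $b_1\in B_p^+$, $b_2\in B_r^+$ with $A=b_1\tau b_2$.

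For uniqueness I would exhibit a complete set of invariants of the action. For $0\le i\le p$ and $0\le j\le r$ set $\rho_{i,j}(A)=\rank$ of the submatrix of $A$ on the rows $\{i+1,\dots,p\}$ and columns $\{1,\dots,j\}$. A direct check shows $\rho_{i,j}$ is unchanged under $A\mapsto b_1Ab_2$: restricting $b_1A$ to the rows $\{i+1,\dots,p\}$ multiplies $A$ on the left by the (invertible, upper-triangular) lower-right block of $b_1$, and restricting $Ab_2$ to the columns $\{1,\dots,j\}$ multiplies on the right by the upper-left block of $b_2$, neither of which changes the rank. For $\tau\in\ppermutationsof_{p,r}$ the number $\rho_{i,j}(\tau)$ is exactly the number of $1$'s of $\tau$ lying in rows $>i$ and columns $\le j$, whence the entry of $\tau$ at $(a,b)$ is recovered as the second difference $\rho_{a-1,b}-\rho_{a,b}-\rho_{a-1,b-1}+\rho_{a,b-1}\in\{0,1\}$. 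Thus the family $(\rho_{i,j})$ determines $\tau$ uniquely; since it is constant on $(B_p^+\times B_r^+)$-orbits, two partial permutation matrices in the orbit of $A$ must coincide, which gives uniqueness.
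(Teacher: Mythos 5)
Your proposal is correct and takes essentially the same approach as the paper: existence by Gaussian elimination using only the moves afforded by $B_p^+$ (adding lower rows to upper rows) and $B_r^+$ (adding left columns to right columns), which you simply organize as an explicit induction on $p$, and uniqueness via the lower-left corner rank invariants, where your $\rho_{i,j}(A)$ coincides (up to an index shift) with the paper's $\beta_{i,j}(a)=\rank\,(a_{k,\ell})_{i\leq k\leq p,\ 1\leq \ell\leq j}$ together with the same second-difference recovery of the entries of $\tau$. No gaps; your check that the inductive operations extend to the full matrix without disturbing the pivot is exactly the point the paper leaves implicit in its brief elimination argument.
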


\begin{proof}
Through Gauss elimination, any $p\times r$ matrix $a$ can be
transformed into a matrix $\tau\in\ppermutationsof_{p,r}$ by a
series of operations consisting of multiplying a row (resp., a
column) by a nonzero scalar or adding to a row (resp., to a column)
another row (resp., column) situated below it (resp., on its left).
These operations correspond to multiplying on the left (resp., on
the right) by an element of $B_p^+$ (resp., $B_r^+$). Hence the
double coset $B_p^+aB_r^+$ contains an element
$\tau\in\ppermutationsof_{p,r}$, which means that $a\in
B_p^+ \tau B_r^+$.

For every pair $(i,j)\in\{1,\ldots,p\}\times\{1,\ldots,r\}$, the mapping $a\mapsto \beta_{i,j}(a):=\rank\,(a_{k,\ell})_{\substack{i\leq k\leq p \\ 1\leq \ell\leq j}}$
is constant on the set $B_p^+\tau B_r^+$, and we have
\[\beta_{i,j}(a)=\beta_{i,j}(\tau)=\card\{\mbox{$1$'s within the submatrix $(\tau_{k,\ell})_{\substack{i\leq k\leq p \\ 1\leq \ell\leq j}}$}\}.\]
This implies that two different elements $\tau,\tau'\in\ppermutationsof_{p,r}$ cannot belong to the same double coset $B_p^+aB_r^+$, whence the uniqueness.
\end{proof}

The second lemma is analogous to \cite[Lemma 8.2]{FN2}.

\begin{lemma}
\label{L3.3}
For every $\tau\in\ppermutationsof_{p,r}$, there is a permutation $w\in\permutationsof{r}$
such that $\tau w B_r^+\subset B_p^+\tau w$.
\end{lemma}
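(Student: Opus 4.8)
The plan is to choose $w$ so that the matrix $\tau':=\tau w$ is in a convenient normal form, and then to verify the inclusion directly, column by column, by exhibiting for each $b\in B_r^+$ an explicit $b'\in B_p^+$ with $\tau' b=b'\tau'$. Note that, since $\tau' B_r^+\subset B_p^+\tau'$ is all that is required, it suffices to find one good reordering of the columns of $\tau$.

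First I would describe the normal form. Writing the $j$-th column of $\tau$ as $\tau e_j$, each nonzero column equals a distinct standard basis vector, while the remaining columns are zero. Let $i_1<\cdots<i_m$ be the rows occupied by the $1$'s (so $m=\rank\tau$), let $\kappa_a$ be the column with $\tau e_{\kappa_a}=e_{i_a}$, and let $\delta_1<\cdots<\delta_z$ (with $z=r-m$) list the zero columns. I define $w\in\permutationsof{r}$ by $w(j)=\delta_j$ for $j\le z$ and $w(z+a)=\kappa_a$ for $1\le a\le m$, so that $\tau'=\tau w$ has its $z$ zero columns in the leftmost positions, followed by the nonzero columns $e_{i_1},\ldots,e_{i_m}$ in order of strictly increasing row index. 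Then, given $b=(b_{kj})\in B_r^+$, I would build $b'$ by prescribing its columns: set the $i_a$-th column of $b'$ equal to $\sum_{a'\le a}b_{z+a',\,z+a}\,e_{i_{a'}}$ for each $a$, and set $b'e_\ell=e_\ell$ for every $\ell\notin\{i_1,\ldots,i_m\}$. A routine column-by-column comparison, using that $b$ is upper triangular, then gives $\tau' b=b'\tau'$: columns of index $\le z$ vanish on both sides because $\tau'$ has no nonzero column to their left, and the $(z+a)$-th columns of both sides equal $\sum_{a'\le a}b_{z+a',\,z+a}\,e_{i_{a'}}$. This yields $\tau w B_r^+\subset B_p^+\tau w$.

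The step to be checked carefully — and the only place where the two ordering conditions are genuinely used — is that the $b'$ so defined lies in $B_p^+$. Upper-triangularity amounts to saying that in the $i_a$-th column every contributing $e_{i_{a'}}$ with $a'\le a$ satisfies $i_{a'}\le i_a$, which holds exactly because the nonzero columns were sorted into increasing row order; invertibility follows since the diagonal entries are $b'_{i_a,i_a}=b_{z+a,\,z+a}\neq 0$ and $b'_{\ell,\ell}=1$ otherwise. The placement of the zero columns at the far left is what prevents an (above-diagonal, hence unconstrained) entry of $b$ from feeding a nonzero vector into a column of $\tau' b$ that is zero in $b'\tau'$; without either condition the inclusion already fails, as one sees for $p=r=1$ with $\tau=(1\ 0)$.
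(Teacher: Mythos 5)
Your proof is correct and takes essentially the same approach as the paper: you choose the identical $w$ (zero columns moved to the leftmost positions, nonzero columns sorted by increasing row index), and the only difference is that you exhibit $b'\in B_p^+$ in closed form for an arbitrary $b\in B_r^+$, whereas the paper verifies the inclusion on the generators of $B_r^+$ (diagonal matrices and transvections) --- an equivalent computation. One cosmetic slip: in your closing counterexample, $\tau=(1\ 0)$ is a $1\times 2$ matrix, so the parameters should read $p=1$, $r=2$ rather than $p=r=1$.
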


\begin{proof}
By $(e_1^\ell,\ldots,e_\ell^\ell)$ we denote the standard basis of $\C^\ell$. By $e_{i,j}^\ell$ we denote the elementary $\ell\times \ell$ matrix
whose $(i,j)$ coefficient is $1$ and the other coefficients are $0$. By $\mathrm{diag}(t_1,\ldots,t_\ell)$, we denote the diagonal matrix with coefficients $t_1,\ldots,t_\ell$ along the diagonal.

Let $k=\rank\,\tau$. Let $1\leq i_1<\ldots<i_k\leq p$ be such that $\mathrm{Im}\,\tau=\Span{e_{i_1}^p,\ldots,e_{i_k}^p}$.
We choose $w\in\permutationsof{r}$ such that $\tau w(e_j^r)=0$ for $1\leq j\leq r-k$ and $\tau w(e_{r-k+j}^r)=e_{i_j}^p$
for $1\leq j\leq k$.

For every diagonal matrix $t=\diag(t_1,\ldots,t_r)\in B_r^+$, we have
\[\tau w t=t'\tau w\in B_p^+\tau w\]
for any diagonal matrix $t'=\diag(t'_1,\ldots,t'_p)\in B_p^+$ such that $t'_{i_j}=t_{r-k+j}$ for all $j\in\{1,\ldots,k\}$.
For every transvection $u=1_r+xe_{j,\ell}^r\in B_r^+$ where $1\leq j<\ell\leq r$, we have
\[
\tau w u=\left\{\begin{array}{ll}
\tau w & \mbox{if $j\leq r-k$}, \\
(1_p+xe_{i_{j-(r-k)},i_{\ell-(r-k)}}^p)\tau w & \mbox{if $j>r-k$},
\end{array}\right.
\]
hence $\tau wu\in B_p^+\tau w$ in each case. Since $B_r^+$ is generated by the diagonal matrices and the transvections, we conclude that $\tau w B_r^+\subset B_p^+\tau w$.
\end{proof}

Now we are ready to prove parts (1) and (3) of Theorem \ref{T1}.

\begin{proof}[Proof of Theorem \ref{T1}\,(1) and (3)]
Every $W\in\Grass(V,r)$ is the image of a matrix
\[a=\begin{pmatrix} a_1 \\ a_2 \end{pmatrix}\quad\mbox{with}\quad a_1\in\Mat_{p,r}(\C),\ a_2\in\Mat_{q,r}(\C),\ \rank\,a=r.\]
By Lemma \ref{L3.2}, there are $\tau_1\in \ppermutationsof_{p,r}$,
$b_1\in B_p^+$, $b_2\in B_r^+$ such that $a_1=b_1\tau_1b_2$. Moreover, by Lemma \ref{L3.3}, there is $w\in\permutationsof{r}$ such that $\tau_1wB_r^+\subset B_p^+\tau_1w$. We have
\[
a=\begin{pmatrix} a_1 \\ a_2 \end{pmatrix}= \begin{pmatrix} b_1\tau_1w \\ a'_2\end{pmatrix}w^{-1}b_2
\]
for some $a'_2\in\Mat_{q,r}(\C)$. Applying again Lemma \ref{L3.2}, there are $\tau_2\in\ppermutationsof_{q,r}$, $b_3\in B_r^+$, and $b_4\in B_q^+$ such that $a'_2=b_4\tau_2b_3$. Moreover, there is $b'_1\in B_p^+$ such that $\tau_1wb_3^{-1}=b'_1\tau_1w$. This yields
\[a=\begin{pmatrix} b_1b'_1\tau_1w \\ b_4\tau_2\end{pmatrix}b_3w^{-1}b_2=\begin{pmatrix} b_1b'_1 & 0 \\ 0 & b_4\end{pmatrix}\begin{pmatrix}\tau_1w \\ \tau_2\end{pmatrix}b_3w^{-1}b_2\]
hence
\[
W=\mathrm{Im}\,a\in B_K\cdot[\omega]\quad\mbox{where}\quad \omega=\begin{pmatrix} \tau_1w \\ \tau_2\end{pmatrix}\in\ppermutationsof_{(p,q),r}.
\]
This implies that $\Grass(V,r)=\bigcup_{\omega\in\parameters}B_K\cdot[\omega]$, hence $\Xfv=\bigcup_{\omega\in\parameters}\Xorbit_\omega$ in view of Lemma \ref{L3.1}.

The mappings
\[\xi=(W,(F^+_i)_{i=0}^p,(F^-_j)_{j=0}^q)\mapsto d_{i,j}(\xi):=\dim W\cap (F_i^++F_j^-),\] for $(i,j)\in\{0,\ldots,p\}\times\{0,\ldots,q\}$, are constant on every $K$-orbit of $\Xfv$.
Let $(e_1^+,\ldots,e_p^+)$ (resp., $(e_1^-,\ldots,e_q^-$)) be the standard basis of $V^+=\C^p\times\{0\}^q$ (resp., $V^-=\{0\}^p\times\C^q$), so that the standard flags $\flag_0^\pm$ are given by $\flag_0^+=(\Span{e_1^+,\ldots,e_i^+})_{i=0}^p$ and $\flag_0^-=(\Span{e_1^-,\ldots,e_j^-})_{j=0}^q$.
For every $\omega\in\parameters$,
the definition of the graph $\mathcal{G}(\omega)$ implies that the subspace \[[\omega]\cap(\Span{e_1^+,\ldots,e_i^+}+\Span{e_1^-,\ldots,e_j^-})\]
is spanned by the vectors $e_k^+$ ($1\leq k\leq i$) such that $\mathcal{G}(\omega)$ has a mark at $k^+$, the vectors $e_\ell^-$ ($1\leq \ell\leq j$) such that $\mathcal{G}(\omega)$ has a mark at $\ell^-$, and the linear combinations $e_k^++e_\ell^-$ ($1\leq k\leq i$ and $1\leq \ell\leq j$) such that $\mathcal{G}(\omega)$ has an edge joining the vertices $k^+$ and $\ell^-$. This implies that
\[
d_{i,j}(([\omega],\flag_0^+,\flag_0^-))=\dim [\omega]\cap(\Span{e_1^+,\ldots,e_i^+}+\Span{e_1^-,\ldots,e_j^-})=r_{i,j}(\omega).
\]
We deduce that
\begin{equation}
\label{3.1}
\Xorbit_\omega\subset \{\xi=(W,\flag^+,\flag^-)\in\Xfv: d_{i,j}(\xi)=r_{i,j}(\omega)\ \ \mbox{for all $i,j$}\}\quad\mbox{for all $\omega\in\parameters$}.
\end{equation}

If $\omega$, $\omega'$ are two different elements of the set $\parameters$, then their graphs $\mathcal{G}(\omega)$, $\mathcal{G}(\omega')$ must be different, hence the matrices $R(\omega)=(r_{i,j}(\omega))$ and $R(\omega')=(r_{i,j}(\omega'))$ are different. From (\ref{3.1}), it follows that the orbits $\Xorbit_\omega$ and $\Xorbit_{\omega'}$ are disjoint.
Therefore, $\Xfv$ is the disjoint union of the orbits $\Xorbit_\omega$ for $\omega\in\parameters$. This also implies that the inclusion in (\ref{3.1}) must be an equality for all $\omega\in\parameters$.
This establishes Theorem \ref{T1}\,(1) and (3).
\end{proof}

\begin{proof}[Proof of Theorem \ref{T1}\,(2)]
Lemma \ref{L3.1} implies
\begin{equation}
\label{dim-1}
\dim\Xorbit_\omega=\dim B_K\cdot[\omega]+\dim K/B_K=\dim B_K\cdot[\omega]+\binom{p}{2}+\binom{q}{2}.
\end{equation}
Let $\fb_K=\Lie(B_K)=\left\{\begin{pmatrix} x & 0 \\ 0 & y \end{pmatrix}:x\in\fb_p^+,\ y\in\fb_q^+\right\}$, where $\fb_p^+=\Lie(B_p^+)\subset\Mat_p(\C)$ and $\fb_q^+=\Lie(B_q^+)\subset\Mat_q(\C)$ are the subspaces of upper triangular matrices.
We have
\begin{eqnarray}
\dim B_K\cdot[\omega] & = & \dim B_K-\dim\{b\in B_K:b([\omega])=[\omega]\}  \label{dim-2} \\
 & = & \dim\fb_K-\dim\{z\in\fb_K:z([\omega])\subset[\omega]\}. \nonumber
\end{eqnarray}
As before, we denote by $(e_1^+,\ldots,e_p^+)$, resp. $(e_1^-,\ldots,e_q^-)$, the standard basis of $V^+=\C^p\times\{0\}^q$, resp. $V^-=\{0\}^p\times\C^q$.
The linear space $[\omega]$ is spanned by the vectors $e_a^+$ with $a\in\{1,\ldots,p\}$ such that the graph $\mathcal{G}(\omega)$ has a mark at $a^+$, the vectors $e_c^-$ with $c\in\{1,\ldots,q\}$ such that there is a mark at $c^-$, and the linear combinations $e_a^++e_c^-$ for all pairs $(a,c)\in\{1,\ldots,p\}\times\{1,\ldots,q\}$ such that $\mathcal{G}(\omega)$ has an edge joining $a^+$ and $c^-$.
This implies that a matrix $z=\begin{pmatrix} x & 0 \\ 0 & y \end{pmatrix}\in\fb_K$
satisfies $z([\omega])\subset [\omega]$ if and only if the upper triangular matrices $x=(x_{i,j})_{1\leq i,j\leq p}$ and $y=(y_{i,j})_{1\leq i,j\leq q}$ satisfy the following equations:
\begin{enumerate}
\item For every $a\in\{1,\ldots,p\}$ such that $\mathcal{G}(\omega)$ has a mark at $a^+$, we must have $x_{i,a}=0$ for all $i<a$ such that there is no mark at $i^+$.
\item For every $c\in\{1,\ldots,q\}$ such that $\mathcal{G}(\omega)$ has a mark at $c^-$, we must have $y_{j,c}=0$ for all $j<c$ such that there is no mark at $j^-$.
\item For every pair $(a,c)\in\{1,\ldots,p\}\times\{1,\ldots,q\}$ such that $\mathcal{G}(\omega)$ has an edge $(a^+,c^-)$, we must have $x_{i,a}=0$ for all $i<a$ such that $i^+$ is a free vertex (i.e. not marked nor incident with an edge) in $\mathcal{G}(\omega)$,
and we must have $y_{j,c}=0$ for all $j<c$ such that $j^-$ is a free vertex in $\mathcal{G}(\omega)$.
\item For $(a,c)$ as in (3), we must also have $x_{i,a}=y_{j,c}$ for all pair $(i,j)\in\{1,\ldots,a\}\times\{1,\ldots,c\}$ such that $(i^+,j^-)$ is an edge in $\mathcal{G}(\omega)$, i.e., for all edge which is situated on the left of $(a^+,c^-)$ or coincides with $(a^+,c^-)$ itself. Finally, we get one more equation $x_{i,a}=0$, or resp. $y_{j,c}=0$, for every edge $(i^+,j^-)$ which has a crossing with $(a^+,c^-)$, i.e., such that $i<a$ and $c<j$, resp. $i>a$ and $j<c$.
\end{enumerate}
We have listed linearly independent equations which characterize the subspace $\{z\in\fb_K:z([\omega])\subset[\omega]\}\subset\fb_K$. With the notation of Theorem \ref{T1}\,(2), the above items (1)--(3) yield $a^+(\omega)+a^-(\omega)$ equations, while the item (4) yields $\frac{b(\omega)(b(\omega)+1)}{2}+c(\omega)$ equations.
This implies that
\[
\dim\{z\in\fb_K:z([\omega])\subset[\omega]\}=\dim\fb_K-\Big(a^+(\omega)+a^-(\omega)+\frac{b(\omega)(b(\omega)+1)}{2}+c(\omega)\Big).
\]
Combining this equality with (\ref{dim-1}) and (\ref{dim-2}), we get the dimension formula stated in Theorem \ref{T1}\,(2).
\end{proof}

\subsection{Closure relations -- proof of Theorems \ref{T1}\,(4) and \ref{C1}}

\label{section-3.3}

For $\omega,\omega'\in\parameters$, we write $\omega\preceq\omega'$ if we have $r_{i,j}(\omega)\geq r_{i,j}(\omega')$ for all
$(i,j)\in\{0,\ldots,p\}\times\{0,\ldots,q\}$.
This clearly endows $\parameters$ with a partial order and, for showing Theorem \ref{T1}\,(4), we have to show that this order characterizes the inclusion relations between orbit closures in $\Xfv$.
We need four lemmas.

\begin{lemma}
\label{lemma-3.2}\label{L3.4}
Assume that $\omega$ is obtained from $\omega'$ by one of the elementary moves described in Figure \ref{figure1}. Then, the following relations hold:
\[\omega\prec \omega'\qquad\mbox{and}\qquad \Xorbit_\omega\subset\overline{\Xorbit_{\omega'}}.\]
\end{lemma}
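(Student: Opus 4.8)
The plan is to prove the two assertions of Lemma~\ref{L3.4} separately. First I would establish the combinatorial inequality $\omega\prec\omega'$, and then deduce the geometric inclusion $\Xorbit_\omega\subset\overline{\Xorbit_{\omega'}}$. For the combinatorial part, I would inspect each of the five cases of Figure~\ref{figure1} and compare the rank matrices $R(\omega)=(r_{i,j}(\omega))$ and $R(\omega')=(r_{i,j}(\omega'))$. In every case the graph $\mathcal{G}(\omega)$ differs from $\mathcal{G}(\omega')$ only in the pattern attached to the four (or fewer) distinguished vertices $a^+,b^+,c^-,d^-$. Since $r_{i,j}$ counts the edges and marks contained inside the rectangle of vertices $\{1^+,\ldots,i^+\}\cup\{1^-,\ldots,j^-\}$, the elementary move affects $r_{i,j}$ only for the finitely many pairs $(i,j)$ whose rectangle separates the modified vertices. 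A direct case check should show $r_{i,j}(\omega)\geq r_{i,j}(\omega')$ for all $(i,j)$, with strict inequality for at least one pair, giving $\omega\prec\omega'$. I expect this to be routine but requires care in Cases~2 and~3, where an edge is replaced by a mark (or conversely), so that the local count of incidences can change by one in a way that must be tracked across the boundary of each rectangle.

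For the geometric inclusion, my strategy is to exhibit an explicit one-parameter degeneration. By Lemma~\ref{L3.1}\,(3), it suffices to prove the corresponding inclusion $\Gorbit_\omega\subset\overline{\Gorbit_{\omega'}}$ of $B_K$-orbit closures in $\Grass(V,r)$. I would construct a morphism $t\mapsto W_t$ from $\C$ (or $\C^*$) into $\Grass(V,r)$ such that $W_t\in\Gorbit_{\omega'}$ for $t\neq 0$ and $\lim_{t\to 0}W_t=[\omega]\in\Gorbit_\omega$; this forces $[\omega]\in\overline{\Gorbit_{\omega'}}$ and hence $\Gorbit_\omega\subset\overline{\Gorbit_{\omega'}}$ since the latter closure is $B_K$-stable. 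Concretely, each elementary move corresponds to a very simple local modification of a spanning matrix: for instance, in Case~1 (the crossing/uncrossing move $\graphA\rightsquigarrow\graphC$) one replaces the two columns $e_a^++e_d^-$ and $e_b^++e_c^-$ by a family interpolating to $e_a^++e_c^-$ and $e_b^++e_d^-$; in the cases turning an edge into a mark, one lets the matching coefficient on the opposite block tend to $0$. I would write down, case by case, a matrix $\omega'(t)$ with $\mathrm{Im}\,\omega'(t)\in\Gorbit_{\omega'}$ for generic $t$ and $\mathrm{Im}\,\omega'(0)=[\omega]$.

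The main obstacle, and the step deserving the most attention, is verifying that the family $W_t$ genuinely lies in the single orbit $\Gorbit_{\omega'}$ for all $t\neq 0$, rather than wandering through several orbits. Here I would use the orbit characterization from Theorem~\ref{T1}\,(3): an orbit is cut out by the rank conditions $\dim W\cap(F_i^++F_j^-)=r_{i,j}$. Thus it is enough to check that the integers $d_{i,j}(W_t)=\dim W_t\cap(\Span{e_1^+,\ldots,e_i^+}+\Span{e_1^-,\ldots,e_j^-})$ are independent of $t$ for $t\neq 0$ and equal $r_{i,j}(\omega')$, which is a finite linear-algebra computation for each case. Once this is confirmed, semicontinuity of the dimension of intersections with the fixed flags guarantees $d_{i,j}([\omega])\geq r_{i,j}(\omega')$, consistent with the inequality $r_{i,j}(\omega)\geq r_{i,j}(\omega')$ proved in the first part, and the specialization $W_0=[\omega]$ closes the argument.

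Since the verification is entirely local to the four marked vertices, I would organize the proof by treating one representative family per case and noting that the remaining subcases of Figure~\ref{figure1} follow by the evident symmetry exchanging the two blocks $V^+$ and $V^-$ (equivalently, transposing the roles of $\tau_1$ and $\tau_2$). This keeps the case analysis short: the real content lies in writing the interpolating matrices and confirming the rank invariants, both of which are mechanical once set up.
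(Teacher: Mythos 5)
Your proposal is correct and follows the same two-step plan as the paper: a case-by-case comparison of the rank matrices $R(\omega)$ and $R(\omega')$ for the five elementary moves (the paper records, in each case, the exact rectangle of pairs $(i,j)$ on which $r_{i,j}$ jumps by one, which also gives the strictness you need), followed by an explicit one-parameter degeneration. The one genuine difference lies in how the degeneration is organized, and it is instructive: the paper does not deform the subspace directly, but instead takes a curve $t\mapsto h_t$ inside $B_K$ --- a product of a transvection $u^k_{i,j}(\pm t)$ and a torus element $\delta^k_i(\pm t)$ chosen per case --- and shows that $([\omega],\flag_0^+,\flag_0^-)=\lim_{t\to\infty}h_t\cdot([\omega'],\flag_0^+,\flag_0^-)$. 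Since $h_t\in K$, the whole family lies in $\Xorbit_{\omega'}$ by construction, so the step you single out as the main obstacle --- checking that your interpolating family $W_t$ stays in the single orbit $\Gorbit_{\omega'}$ via the rank conditions of Theorem \ref{T1}\,(3) --- simply never arises. Your version is nonetheless valid: Theorem \ref{T1}\,(3) is proved before this lemma, so invoking it is legitimate, and in fact you need less than you ask for, since it suffices that $W_t\in\Gorbit_{\omega'}$ for a sequence of parameters tending to $0$ (membership in $\overline{\Gorbit_{\omega'}}$ is then automatic). Indeed your $W_t$ is essentially the paper's curve reparametrized, $W_t=h_{1/t}\cdot[\omega']$ after rescaling columns; the group-theoretic formulation buys the orbit membership for free and reduces each case to a single limit computation. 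Your closing symmetry remark for the $\pm$ subcases matches the paper's treatment, which realizes the same symmetry by exchanging the roles of the blocks $A_t$ and $D_t$.
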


\begin{proof}
We consider the cases described in Figure \ref{figure1}.
\begin{itemize}
\item
In Case 1, we have $r_{i,j}(\omega)=r_{i,j}(\omega')+1$ if $a\leq i<b$ and $c\leq j<d$, and we have $r_{i,j}(\omega)=r_{i,j}(\omega')$ otherwise.
Hence $r_{i,j}(\omega)\geq r_{i,j}(\omega')$ for all $i,j$, and this implies that $\omega\prec\omega'$.
\item In Case 2, upper subcase (resp., lower subcase), we have $r_{i,j}(\omega)=r_{i,j}(\omega')+1$ if $a\leq i<b$ and $j<c$ (resp., $i<a$ and $c\leq j<d$), and $r_{i,j}(\omega)=r_{i,j}(\omega')$ otherwise. Hence, again, we get $\omega\prec\omega'$.
\item In Case 3, upper subcase (resp., lower subcase), we have $r_{i,j}(\omega)=r_{i,j}(\omega')+1$ if $a\leq i<b$ and $c\leq j$ (resp., $a\leq i$ and $c\leq j<d$) and $r_{i,j}(\omega)=r_{i,j}(\omega')$ otherwise. Whence $\omega\prec\omega'$.
\item In Case 4, upper subcase (resp., lower subcase), we have $r_{i,j}(\omega)=r_{i,j}(\omega')+1$ if $a\leq i$ and $j<c$ (resp., $i<a$ and $c\leq j$) and $r_{i,j}(\omega)=r_{i,j}(\omega')$ otherwise. Once again this yields $\omega\prec\omega'$.
\item In Case 5, upper subcase (resp., lower subcase), we have $r_{i,j}(\omega)=r_{i,j}(\omega')+1$ if $a\leq i<b$ (resp., $c\leq j<d$) and $r_{i,j}(\omega)=r_{i,j}(\omega')$ otherwise, and once again we deduce that $\omega\prec\omega'$ in this case.
\end{itemize}
In each case, we have shown that $\omega\prec\omega'$.

As before, we denote by $e_{i,j}^k$ the $k\times k$ elementary matrix
with $1$ at position $(i,j)$ and $0$ elsewhere.
Then, let $u_{i,j}^k(t)=1_k+te_{i,j}^k$ and $\delta_i^k(t)=1_k+(t-1)e_{i,i}^k$.
For $t\in\C^*$, we consider the matrix $h_t$ given by
\[h_t=\begin{pmatrix} A_t & 0 \\ 0 & D_t \end{pmatrix},\]
where $A_t$ and $D_t$ are blocks of respective sizes $p\times p$ and $q\times q$ given by
\[
A_t=\left\{
\begin{array}{ll}
u_{a,b}^p(-t)\delta^p_a(t) & \mbox{in Cases 1, $2^+$,} \\
u_{a,b}^p(t) & \mbox{in Cases $3^+$, $5^+$,} \\
\delta_a^p(t) & \mbox{in Cases $3^-$, $4^+$,} \\
1_p & \mbox{in Cases $2^-$,\,$4^-$,\,$5^-$,} 
\end{array}
\right.
\
D_t=\left\{
\begin{array}{ll}
u^q_{c,d}(t)\delta^q_c(-t) & \mbox{in Cases 1, $2^-$,} \\
u_{c,d}^q(t) & \mbox{in Cases $3^-$, $5^-$,} \\
\delta_c^q(t) & \mbox{in Cases $3^+$, $4^-$,} \\
1_q & \mbox{in Cases $2^+$,\,$4^+$,\,$5^+$.} 
\end{array}
\right.
\]
Here the notation $N^+$ (resp., $N^-$) refers to the upper (resp., lower) subcase of Case $N$ in Figure \ref{figure1}.
In each case, we obtain a subset $\{h_t\}_{t\in\C^*}\subset K$ such that
\[([\omega],\flag_0^+,\flag_0^-)=\lim_{t\to \infty}h_t\cdot([\omega'],\flag_0^+,\flag_0^-),\]
and this shows that the inclusion $\Xorbit_\omega\subset\overline{\Xorbit_{\omega'}}$ holds.
\end{proof}

\begin{lemma}
\label{L3.5}
For every $\omega,\omega'\in\parameters$, the following implication holds:
\[\Xorbit_\omega\subset\overline{\Xorbit_{\omega'}}\quad\Longrightarrow\quad \omega\preceq\omega'.\]
\end{lemma}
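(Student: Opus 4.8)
The plan is to reduce the implication to the upper semicontinuity of the intersection-dimension functions already used in the proof of Theorem~\ref{T1}. Recall that for each pair $(i,j)\in\{0,\ldots,p\}\times\{0,\ldots,q\}$ the function
\[
d_{i,j}\colon\Xfv\to\mathbb{Z},\qquad d_{i,j}(W,\flag^+,\flag^-)=\dim W\cap(F_i^++F_j^-),
\]
is constant on every $K$-orbit and equals $r_{i,j}(\omega)$ on $\Xorbit_\omega$; this is exactly what was established in the proof of Theorem~\ref{T1}\,(3). Thus it suffices to show that $\Xorbit_\omega\subset\overline{\Xorbit_{\omega'}}$ forces $r_{i,j}(\omega)\ge r_{i,j}(\omega')$ for every $(i,j)$, which is precisely the definition of $\omega\preceq\omega'$.

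The key step is to verify that each $d_{i,j}$ is upper semicontinuous on $\Xfv$. First I would note that over $\Xfv$ the dimensions $\dim W=r$ and $\dim(F_i^++F_j^-)=i+j$ are constant, the latter because $F_i^+\subset V^+$, $F_j^-\subset V^-$, and $V^+\cap V^-=0$ force the sum to be direct. Using $\dim(A\cap B)=\dim A+\dim B-\dim(A+B)$ I can write
\[
d_{i,j}(W,\flag^+,\flag^-)=r+(i+j)-\dim\bigl(W+F_i^++F_j^-\bigr).
\]
Since the dimension of a span of varying subspaces is lower semicontinuous (the locus where a family of moving vectors spans a space of dimension at most $c$ is closed, being cut out by the vanishing of the $(c+1)\times(c+1)$ minors of a representing matrix), the right-hand side is upper semicontinuous in $(W,\flag^+,\flag^-)$.

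Finally I would run the standard orbit-closure argument. For fixed $(i,j)$ the set $\{d_{i,j}\ge r_{i,j}(\omega')\}$ is closed by upper semicontinuity and contains $\Xorbit_{\omega'}$, hence contains $\overline{\Xorbit_{\omega'}}$ and in particular $\Xorbit_\omega$. Evaluating $d_{i,j}$ at any point of $\Xorbit_\omega$ yields $r_{i,j}(\omega)\ge r_{i,j}(\omega')$. As $(i,j)$ was arbitrary, this gives $\omega\preceq\omega'$. I expect no serious obstacle: the only point requiring care is the upper semicontinuity of $d_{i,j}$, which is a routine consequence of the minor/rank description once one observes that $\dim W$ and $\dim(F_i^++F_j^-)$ are constant over $\Xfv$; the fact that each $d_{i,j}$ is an orbit invariant with value $r_{i,j}(\omega)$ has already been recorded in the proof of Theorem~\ref{T1}.
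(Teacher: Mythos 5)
Your proof is correct and follows essentially the same route as the paper: both arguments rest on the upper semicontinuity of the functions $d_{i,j}(W,\flag^+,\flag^-)=\dim W\cap(F_i^++F_j^-)$ together with the orbit characterization $d_{i,j}\equiv r_{i,j}(\omega)$ on $\Xorbit_\omega$ from Theorem~\ref{T1}\,(3), followed by the standard orbit-closure argument. The only difference is that the paper asserts the semicontinuity without proof, whereas you correctly justify it via $d_{i,j}=r+(i+j)-\dim(W+F_i^++F_j^-)$ and the lower semicontinuity of the dimension of a span.
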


\begin{proof}
Assume that $\Xorbit_\omega\subset\overline{\Xorbit_{\omega'}}$. For each pair $(i,j)\in\{0,\ldots,p\}\times\{0,\ldots,q\}$, the mapping
\[
\Xfv\to\mathbb{Z}_{\geq 0},\quad (W,(F^+_k)_{k=0}^p,(F^-_\ell)_{\ell=0}^q)\mapsto \dim W\cap(F_i^++F_j^-)
\]
is upper semicontinuous. Thus, in view of Theorem \ref{T1}\,(3), we have
\[
\Xorbit_\omega\subset \overline{\Xorbit_{\omega'}}\subset\{(W,(F^+_k)_{k=0}^p,(F^-_\ell)_{\ell=0}^q)\in\Xfv:\dim W\cap (F_i^++F_j^-)\geq r_{i,j}(\omega')\},
\]
whereas
\[
\Xorbit_\omega\subset\{(W,(F^+_k)_{k=0}^p,(F^-_\ell)_{\ell=0}^q)\in\Xfv:\dim W\cap (F_i^++F_j^-)=r_{i,j}(\omega)\}.
\]
This yields $r_{i,j}(\omega)\geq r_{i,j}(\omega')$ for all pair $(i,j)$, hence $\omega\preceq\omega'$.
\end{proof}

\begin{lemma}
\label{L3.6}
For every $\omega,\omega''\in\parameters$ such that $\omega\prec\omega''$, there is $\omega'\in\parameters$ with $\omega\preceq\omega'\preceq\omega''$ such that one of the pairs $(\omega,\omega')$, $(\omega',\omega'')$ fits in one of the cases described in Figure \ref{figure1}.
\end{lemma}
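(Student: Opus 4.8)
The plan is to translate the statement into the combinatorics of the rank matrices $R(\omega)=(r_{i,j}(\omega))$ and to produce the required elementary move by inspecting a single extremal cell of the difference $R(\omega)-R(\omega'')$.

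First I would record the dictionary between $\parameters$ and rank matrices. As in the proof of Theorem~\ref{T1}(3), the assignment $\omega\mapsto R(\omega)$ is injective, and its image is exactly the set of integer matrices $(r_{i,j})_{0\le i\le p,\,0\le j\le q}$ with $r_{0,0}=0$, $r_{p,q}=r$, whose first differences $r_{i,j}-r_{i-1,j}$ and $r_{i,j}-r_{i,j-1}$ lie in $\{0,1\}$ and whose mixed second difference $\Delta^2_{i,j}:=r_{i,j}-r_{i-1,j}-r_{i,j-1}+r_{i-1,j-1}$ lies in $\{0,1\}$; here $\Delta^2_{i,j}=1$ signals an edge $(i^+,j^-)$, $r_{i,0}-r_{i-1,0}=1$ a mark at $i^+$, and $r_{0,j}-r_{0,j-1}=1$ a mark at $j^-$. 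Under this dictionary the order $\preceq$ becomes the entrywise order $\ge$, and the computations in the proof of Lemma~\ref{L3.4} show that each elementary move of Figure~\ref{figure1} replaces $R(\omega')$ by $R(\omega)=R(\omega')+\mathbf{1}_S$ for an explicit rectangle $S$ (a product of intervals, possibly abutting the boundary), the applicability of the move being equivalent to $R(\omega')+\mathbf{1}_S$ still lying in the image. Thus it suffices to find a rectangle $S$ of one of these admissible shapes such that either $R(\omega'')+\mathbf{1}_S$ or $R(\omega)-\mathbf{1}_S$ lies between $R(\omega'')$ and $R(\omega)$ and in the image.

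Next I would extract the move from an extremal cell. Set $D=R(\omega)-R(\omega'')\ge 0$, $D\neq 0$, with $D_{0,0}=D_{p,q}=0$, and choose a cell $(i_0,j_0)$ that is minimal, for the product order, in the support $\{(i,j):D_{i,j}\ge 1\}$. Minimality forces $D$ to vanish at $(i_0-1,j_0)$, $(i_0,j_0-1)$ and $(i_0-1,j_0-1)$, so $\Delta^2_{i_0,j_0}D=D_{i_0,j_0}\ge 1$; since each $\Delta^2_{i_0,j_0}$ lies in $\{0,1\}$, we get $D_{i_0,j_0}=1$, $\Delta^2_{i_0,j_0}R(\omega)=1$ and $\Delta^2_{i_0,j_0}R(\omega'')=0$. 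In the interior case $i_0,j_0\ge 1$ this means that $\mathcal{G}(\omega)$ has the edge $(i_0^+,j_0^-)$ while $\mathcal{G}(\omega'')$ does not; moreover $R(\omega)$ and $R(\omega'')$ agree on the whole window strictly up-left of $(i_0,j_0)$, so the two graphs carry the same marks and edges inside $\{1^+,\dots,i_0^+\}\cup\{1^-,\dots,j_0^-\}$ apart from that one edge. The two boundary cases $j_0=0$ and $i_0=0$ are analogous, producing a mark of $\omega$ at $i_0^+$ (resp. $j_0^-$) absent from $\omega''$.

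Finally I would build the move by a case analysis on how the vertices $i_0^+$ and $j_0^-$ sit inside $\mathcal{G}(\omega'')$: each is free, marked, or incident with an edge reaching strictly below/right of the window (the only possibilities compatible with the previous paragraph). When $\omega''$ has a nearby edge or mark that can be advanced into the position freed at $(i_0,j_0)$, I would apply the corresponding move of Figure~\ref{figure1} to $\omega''$ (Option~A, giving $\omega'$ with $\omega\preceq\omega'\prec\omega''$); otherwise I would displace the edge $(i_0^+,j_0^-)$ of $\omega$ down-right by the reverse of a move of Figure~\ref{figure1} (Option~B, giving $\omega\prec\omega'\preceq\omega''$). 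I expect the main obstacle to be precisely the verification that the rectangle $S$ underlying the chosen move lies inside the support $\{D\ge 1\}$, since this is what guarantees $R(\omega'')+\mathbf{1}_S\le R(\omega)$ (resp. $R(\omega)-\mathbf{1}_S\ge R(\omega'')$) and hence that $\omega'$ is genuinely between $\omega$ and $\omega''$. Organizing the local configurations so that they fall into exactly the five cases (and their subcases) of Figure~\ref{figure1}, and checking that in each configuration at least one of Option~A or Option~B keeps $S$ within the support, is the combinatorial heart of the argument; the freedom to switch between the two options is what makes every case succeed.
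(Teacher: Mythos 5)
Your setup is sound and in fact reconstructs the paper's broad strategy: the dictionary $\omega\mapsto R(\omega)$ with the image characterized by first and mixed second differences in $\{0,1\}$ is correct, the observation that each move of Figure \ref{figure1} adds the indicator $\mathbf{1}_S$ of a rectangle (this is exactly the content of the case computations in Lemma \ref{L3.4}), and the two-option dichotomy (push $\omega''$ down toward $\omega$, or push $\omega$ up toward $\omega''$) mirrors the paper's Case 1/Case 2. But there is a genuine gap, and you name it yourself: the ``combinatorial heart'' --- exhibiting a specific move in every configuration and verifying that its rectangle $S$ lies in $\{D\geq 1\}$ --- is never carried out. The sentence ``the freedom to switch between the two options is what makes every case succeed'' is an expectation, not an argument, and it is precisely the nontrivial claim of the lemma. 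For comparison, the paper spends essentially its whole proof on this: it first strips common edges/marks by induction on $p+q$ (a reduction you lack, and which is used later, e.g.\ to know $(a_0,c_0)\notin E(\omega)$), then selects the element $(a_0,c_0)\in E(\omega'')$ with $c_0$ minimal (note: an extremal edge/mark of $\omega''$, not, as in your proposal, of $\omega$), splits on whether $r_{a_0,c_0-1}(\omega)\geq 1$, and in each branch makes concrete extremal choices --- in the second branch a pair $(a'_0,c_0)\in E(\omega)$, the element $d_0$, and a \emph{maximal} $a_1$ --- so that the required inequalities become provable, e.g.\ $r_{i,j}(\omega)=r_{a_0,j}(\omega)\geq r_{a_0,j}(\omega'')=1+r_{a_0-1,j}(\omega'')\geq 1+r_{i,j}(\omega'')$ on the decremented rectangle, where the first equality uses the maximality of $a_1$. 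None of these choices or verifications has an analogue in your write-up.

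Two concrete symptoms that the configuration analysis is missing rather than merely compressed. First, your list of possibilities for $i_0^+$ in $\mathcal{G}(\omega'')$ includes ``marked,'' but in the interior case $j_0\geq 1$ this is already impossible: the window agreement forces $r_{i_0,0}(\omega)=r_{i_0,0}(\omega'')$, and $i_0^+$ carries an edge in $\omega$. Second, if both $i_0^+$ and $j_0^-$ are free in $\mathcal{G}(\omega'')$, then no single move of Figure \ref{figure1} applied to $\omega''$ can create the edge $(i_0^+,j_0^-)$ (every move that creates an edge at a vertex consumes an edge or mark already incident with one of the two vertices), so Option A in your sense is unavailable and you are forced into Option B; there you must decide \emph{where} to displace the edge of $\omega$ and prove $D\geq 1$ on the whole (possibly boundary-reaching) rectangle $S$, which does not follow from minimality of $(i_0,j_0)$ alone --- minimality controls the window weakly up-left of $(i_0,j_0)$, while the rectangles of Cases 3--5 extend down-right to the boundary. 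A smaller loose end: you invoke the equivalence ``move applicable $\iff$ $R(\omega')+\mathbf{1}_S$ lies in the image,'' but Lemma \ref{L3.4} gives only the forward direction; the converse (which your reduction actually uses) requires a routine but real check of the second-difference changes at the corners of each of the admissible rectangle shapes. In short: correct frame, right dichotomy, but the lemma's actual content --- the construction of $\omega'$ and the two-sided rank inequalities --- is missing.
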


\begin{proof}
We reason by induction on $p+q\geq 0$, with immediate initialization if $p+q=0$.
Assume that $\omega,\omega''\in\parameters$ are such that $\omega\prec\omega''$. In particular we have $\omega\not=\omega''$,
which forces $r\geq 1$,
i.e., the graphs $\mathcal{G}(\omega)$ and $\mathcal{G}(\omega'')$ have at least one edge or marked vertex.

In the case where $\mathcal{G}(\omega)$ and $\mathcal{G}(\omega'')$ have 
one common edge or mark -- call it $x$,
by removing this edge or mark together with the corresponding vertices (or vertex), and after renumbering of the vertices, we obtain subgraphs $\mathcal{G}(\check\omega)=\mathcal{G}(\omega)\setminus x$ and $\mathcal{G}(\check\omega'')=\mathcal{G}(\omega'')\setminus x$ associated to smaller sized matrices $\check\omega$ and $\check\omega''$, and we still have $\check\omega\prec\check\omega''$ due to the definition of the relation $\preceq$. The induction hypothesis yields $\check\omega'$ with $\check\omega\preceq\check\omega'\preceq\check\omega''$, whose associated graph $\mathcal{G}(\check\omega')$ yields $\mathcal{G}(\check\omega)$ or is yielded by $\mathcal{G}(\check\omega'')$ through one of the elementary moves described in Figure \ref{figure1}. There is an element $\omega'\in\parameters$ such that $\mathcal{G}(\check\omega')=\mathcal{G}(\omega')\setminus x$, and this element satisfies the requirements of the lemma. In conclusion,
\begin{equation}
\label{no-common-edge}
\mbox{we may assume that $\mathcal{G}(\omega)$ and $\mathcal{G}(\omega'')$ have no common edge nor marked vertex.}
\end{equation}

\medskip
\noindent
{\it Notation:}
It is convenient to encode the set of edges and marks of the graph $\mathcal{G}(\omega)$ in the following way:
\begin{eqnarray*}
E(\omega) & := & \{(a,c)\in\{1,\ldots,p\}\times\{1,\ldots,q\}:\mbox{$(a^+,c^-)$ is an edge in $\mathcal{G}(\omega)$}\} \\
 & & \cup\{(a,0):a\in\{1,\ldots,p\},\ \mbox{$\mathcal{G}(\omega)$ has a mark at $a^+$}\} \\
 & & \cup\{(0,c):c\in\{1,\ldots,q\},\ \mbox{$\mathcal{G}(\omega)$ has a mark at $c^-$}\}.
\end{eqnarray*}
Then we note that
\begin{equation}
\label{E-and-rij}
r_{i,j}(\omega)=\card E(\omega)\cap(\{0,\ldots,i\}\times\{0,\ldots,j\})\quad\mbox{for all $i,j$}.
\end{equation}
We define the set $E(\omega'')$ relative to $\omega''$ in the same way.
Both sets $E(\omega)$ and $E(\omega'')$ have $r$ elements, in particular they are nonempty.

\smallskip

We choose an element $(a_0,c_0)\in E(\omega'') $ with the minimal possible value of $c_0$.
If $c_0\not=0$, then $c_0^-$ is not a free vertex in $\mathcal{G}(\omega'')$: it is marked if $a_0=0$ or incident with an edge $(a_0^+,c_0^-)$ if $a_0\not=0$. Moreover, the minimality of $c_0$ guarantees then that every vertex $c^-$ with $c<c_0$ is free in $\mathcal{G}(\omega'')$, and $\mathcal{G}(\omega'')$ contains no mark at $a^+$ for all $a\in\{1,\ldots,p\}$ (because $(a,0)$ cannot belong to $E(\omega'')$, due to the minimality of $c_0$).

If $c_0=0$, then $a_0^+$ is a marked vertex in $\mathcal{G}(\omega'')$.
There may be more than one marked vertex of this type in $\mathcal{G}(\omega'')$, and we choose $a_0$ minimal for this property. Thus, in any situation, we have $r_{a_0,c_0}(\omega'')=1$.

\smallskip
\noindent
{\it Case 1:} $c_0\not=0$ and $r_{a_0,c_0-1}(\omega)\geq 1$.

The condition means that $\mathcal{G}(\omega)$ has an edge or mark within the vertices $\{i^+:1\leq i\leq a_0\}\cup\{j^-:1\leq j<c_0\}$.
In other words, we can find a pair $(a_1,c_1)\in E(\omega)$
with $0\leq a_1\leq a_0$ and $0\leq c_1<c_0$.
Note that $(a_0,c_1)\not=(0,0)$ since $(a_1,c_1)\not=(0,0)$.
There is an element
$\omega'\in\parameters$ such that
\[
E(\omega')=(E(\omega'')\setminus\{(a_0,c_0)\})\cup\{(a_0,c_1)\}.
\]
This incorporates several situations, and in each one the graph $\mathcal{G}(\omega')$ is deduced from $\mathcal{G}(\omega'')$ through one of the elementary moves depicted in Figure \ref{figure1}:
\begin{itemize}
\item If $a_0\not=0$ and $c_1\not=0$ (resp., $c_1=0$), then $\mathcal{G}(\omega')$ is obtained from $\mathcal{G}(\omega'')$ by replacing
the edge $(a_0^+,c_0^-)$ by an edge $(a_0^+,c_1^-)$ (resp., by a mark
at $a_0^+$), whereas $c_0^-$ becomes a free vertex. This
corresponds to Case 3 -- lower subcase (resp., Case 4 -- upper
subcase) in Figure \ref{figure1}.
\item If $a_0=0$, then $\mathcal{G}(\omega'')$
has a mark at $c_0^-$, and $\mathcal{G}(\omega')$
is obtained by replacing this mark by a mark at $c_1^-$, whereas $c_0^-$ becomes a free vertex.
This corresponds to Case 5 -- lower subcase in Figure \ref{figure1}.
\end{itemize}
In each situation, we get $\omega'\prec\omega''$ in view of Lemma \ref{lemma-3.2}.
For $(i,j)\in\{0,\ldots,p\}\times\{0,\ldots,q\}$, we have
$r_{i,j}(\omega')=r_{i,j}(\omega'')$ (hence $r_{i,j}(\omega')\leq r_{i,j}(\omega)$) unless $i\geq a_0$ and $c_1\leq j<c_0$.
If $i\geq a_0$ and $c_1\leq j<c_0$, we have
\[r_{i,j}(\omega')=r_{i,j}(\omega'')+1=1\leq r_{i,j}(\omega)\]
(the second equality is due to the minimality of $c_0$, while the inequality follows from (\ref{E-and-rij}) and the fact that $(a_1,c_1)\in E(\omega)$). We conclude that the inequality $r_{i,j}(\omega')\leq r_{i,j}(\omega)$ holds for all pair $(i,j)$, hence $\omega\preceq\omega'$, and the element $\omega'$ satisfies all the requirements of the lemma.

\smallskip
\noindent
{\it Case 2:} $c_0=0$ or $r_{a_0,c_0-1}(\omega)=0$.

This condition implies that the set $E(\omega)$ contains no pair of the form $(a,c)$ with $0\leq a\leq a_0$ and $0\leq c<c_0$ (see (\ref{E-and-rij})).
Note also that $(a_0,c_0)\notin E(\omega)$ (due to (\ref{no-common-edge})).

Since $r_{a_0,c_0}(\omega)\geq r_{a_0,c_0}(\omega'')=1$, there is a pair $(a'_0,c_0)\in E(\omega)$ with $0\leq a'_0< a_0$.
In particular this forces $a_0\not=0$.

The fact that $a_0\not=0$ implies that $a_0^+$ is a vertex in $\mathcal{G}(\omega)$. Either $a_0^+$ is incident with an edge/marked in $\mathcal{G}(\omega)$, in which case $E(\omega)$ contains an element of the form $(a_0,d_0)$ with $c_0< d_0\leq q$, or $a_0^+$ is a free vertex in $\mathcal{G}(\omega)$, in which case we set $d_0=q+1$.

We choose $a_1\in\{a'_0,\ldots,a_0-1\}$ maximal such that $(a_1,c_1)\in E(\omega)$ for some $c_1$ with $c_0\leq c_1<d_0$.
There is an element
$\omega'\in\parameters$ such that
\[
E(\omega')=\left\{\begin{array}{ll}
(E(\omega)\setminus\{(a_1,c_1),(a_0,d_0)\})\cup\{(a_1,d_0),(a_0,c_1)\} & \mbox{if $d_0\leq q$}, \\
(E(\omega)\setminus\{(a_1,c_1)\})\cup\{(a_0,c_1)\} & \mbox{if $d_0=q+1$}.
\end{array}
\right.
\]
In each situation, the graph $\mathcal{G}(\omega')$ yields $\mathcal{G}(\omega)$ by one of the moves of Figure \ref{figure1}:
\begin{itemize}
\item In the case where $d_0\leq q$,
the graph $\mathcal{G}(\omega)$ has an edge $(a_0^+,d_0^-)$. If
$a_1,c_1\not=0$, then $(a_1^+,c_1^-)$ is also an edge in
$\mathcal{G}(\omega)$, and the relation between
$\mathcal{G}(\omega')$ and $\mathcal{G}(\omega)$ is as depicted in
Case 1 of Figure \ref{figure1}. If $c_1=0$ (resp., $a_1=0$), then
$\mathcal{G}(\omega)$ has a mark at $a_1^+$ (resp., $c_1^-$) and the
relation with $\mathcal{G}(\omega')$ is as in Case 2 - upper subcase
(resp., lower subcase) of Figure \ref{figure1}.
\item In the case where $d_0=q+1$, the vertex $a_0^+$ is a free vertex in $\mathcal{G}(\omega)$. The relation between $\mathcal{G}(\omega')$ and $\mathcal{G}(\omega)$ is as described in Case 3 - upper subcase, Case 5 - upper subcase, or Case 4 - lower subcase of Figure \ref{figure1}, depending on whether
$a_1,c_1\not=0$, $c_1=0$ (and $a_1\not=0$), or $a_1=0$ (and $c_1\not=0$).
\end{itemize}
In particular we have $\omega\prec\omega'$ (by Lemma \ref{lemma-3.2}).

For all $(i,j)\in\{0,\ldots,p\}\times\{0,\ldots,q\}$, we have $r_{i,j}(\omega')=r_{i,j}(\omega)$ unless $a_1\leq i<a_0$ and $c_1\leq j<d_0$, in which case we have $r_{i,j}(\omega')=r_{i,j}(\omega)-1$. In the latter situation, we nevertheless have
\[
r_{i,j}(\omega)=r_{a_0,j}(\omega)\geq r_{a_0,j}(\omega'')=1+r_{a_0-1,j}(\omega'')\geq 1+r_{i,j}(\omega'')
\]
(where the first equality is due to the maximality of $a_1$). Thus, the inequality $r_{i,j}(\omega')\geq r_{i,j}(\omega'')$ holds for all pair $(i,j)$, and therefore we have $\omega'\preceq\omega''$. The element $\omega'$ satisfies the required conditions. This completes the proof of the lemma.
\end{proof}

\begin{lemma}
\label{L3.7}
If $\Xorbit_{\omega'}$ covers $\Xorbit_\omega$, then $\dim\Xorbit_{\omega'}=\dim\Xorbit_\omega+1$.
\end{lemma}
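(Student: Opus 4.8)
The plan is to reduce, via Lemmas \ref{L3.4}--\ref{L3.6}, to a single elementary move and then to read off the dimension jump from the formula in Theorem \ref{T1}\,(2). First I would record that Lemmas \ref{L3.4}, \ref{L3.5} and \ref{L3.6} together identify the order $\preceq$ on $\parameters$ with the closure order on orbits: Lemma \ref{L3.5} gives one implication, and the converse follows by peeling off elementary moves with Lemma \ref{L3.6} and applying Lemma \ref{L3.4} along the resulting chain (this is Theorem \ref{T1}\,(4)). Under this identification, ``$\Xorbit_{\omega'}$ covers $\Xorbit_\omega$'' means precisely that $\omega\prec\omega'$ and that no $\omega''\in\parameters$ satisfies $\omega\prec\omega''\prec\omega'$. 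Applying Lemma \ref{L3.6} to the pair $(\omega,\omega')$ produces $\omega_1$ with $\omega\preceq\omega_1\preceq\omega'$ such that one of $(\omega,\omega_1)$, $(\omega_1,\omega')$ is a move of Figure \ref{figure1}; since there is no strict intermediate we get $\omega_1\in\{\omega,\omega'\}$, so that $(\omega,\omega')$ is itself one of these elementary moves.

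It then remains to prove that such an elementary move raises the dimension by exactly one. The inequality $\dim\Xorbit_{\omega'}-\dim\Xorbit_\omega\geq 1$ is automatic, since $\omega\neq\omega'$ and $\Xorbit_\omega$ is a proper subset of the irreducible boundary of $\overline{\Xorbit_{\omega'}}$; the point is the reverse inequality. I would argue by contradiction: if the jump were $\geq 2$, then performing the move only partway -- relocating the edge or mark involved to an intermediate position, uncrossing a single pair of edges, or converting one edge to a mark, as appropriate to the case -- produces an element $\omega''\in\parameters$ with $\omega\prec\omega''\prec\omega'$ (the two strict relations following from Lemma \ref{L3.4}). This contradicts the absence of a strict intermediate, forcing the jump to equal one.

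The main obstacle is to make this partial move explicit in each of the five cases of Figure \ref{figure1} and to confirm, through the dimension formula of Theorem \ref{T1}\,(2), that it strictly decreases the dimension. The delicate point is that a single elementary move may change the invariants $a^+(\omega)$, $a^-(\omega)$, $b(\omega)$ and $c(\omega)$ by varying amounts -- a degree swap may range over several intervening vertices, and a relocated edge may cross several others -- so that none of them alone records the dimension jump. One has to verify that these contributions always combine in such a way that, whenever the jump is at least two, a proper intermediate $\omega''$ can be produced; this is what ultimately forces a cover to have dimension jump exactly one, and hence $\dim\Xorbit_{\omega'}=\dim\Xorbit_\omega+1$.
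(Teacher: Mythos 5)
There is a genuine gap, and it sits exactly where you placed your ``main obstacle.'' Your reduction is fine: using Theorem \ref{T1}\,(4) together with Lemma \ref{L3.6}, a cover relation forces $(\omega,\omega')$ to be a single elementary move of Figure \ref{figure1}, and this is not circular since Lemmas \ref{L3.4}--\ref{L3.6} are proved independently of Lemma \ref{L3.7}. But the heart of your argument --- that an elementary move with dimension jump $\geq 2$ always admits a strict $\preceq$-intermediate, obtained by ``performing the move only partway'' --- is never carried out, and as literally described it fails in some configurations. For instance, in Case 5 suppose the mark moves from $b^+$ to $a^+$ with a \emph{marked} vertex $e^+$ strictly between ($a<e<b$): one checks from Theorem \ref{T1}\,(2) that the jump is $2$, yet ``relocating the moved mark to an intermediate position'' is impossible because $e^+$ is occupied; an intermediate does exist (marks at $a^+$ and $b^+$, i.e., first move the mark at $e^+$ to $a^+$, then the mark at $b^+$ to $e^+$), but it is produced by moving a \emph{different} mark than the one in your move. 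Similar care is needed in Case 1 when other edges interleave the crossing (there the relevant partial move uncrosses one of the new pairs created, not a sub-move of the original pair). So what you have is a plausible program --- essentially a hands-on proof that the orbit poset is graded by dimension --- whose crucial case analysis, with the bookkeeping of $a^{\pm}$, $b$, $c$ over arbitrary intervening vertex patterns, remains to be done; your own last paragraph concedes this.

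The paper avoids all of this combinatorics. Its proof of Lemma \ref{L3.7} is two lines: a cover means $\overline{\Xorbit_\omega}$ is an irreducible component of the boundary $\partial\Xorbit_{\omega'}$, and by a general fact about actions of connected \emph{solvable} algebraic groups (Timashev), the boundary of any non-closed orbit is equidimensional of codimension one in the orbit closure. This applies because, by Lemma \ref{L3.1}, the $K$-orbits on $\Xfv$ correspond dimension-coherently to the $B_K$-orbits on $\Grass(V,r)$, and $B_K$ is connected solvable. Note also that the paper's logical order is the reverse of yours: Lemma \ref{L3.7} is an \emph{input} to Theorem \ref{C1}, whereas your plan re-derives the hard direction of Theorem \ref{C1} by hand. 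If you want to salvage your approach, you must either complete the five-case intermediate construction (allowing intermediates that modify edges or marks other than those of the given move), or substitute the geometric fact about solvable group actions, which renders the entire reduction to elementary moves unnecessary for this lemma.
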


\begin{proof}
Note that $\Xorbit_{\omega'}$ covers $\Xorbit_\omega$ if and only if $\overline{\Xorbit_\omega}$ is an irreducible component of $\overline{\Xorbit_{\omega'}}\setminus\Xorbit_{\omega'}$.
The conclusion of the lemma is implied by the following general fact, taking also Lemma \ref{L3.1} into account.

\smallskip
\noindent
{\it Fact:} Given a connected solvable algebraic group acting on an algebraic variety, the boundary $\partial O=\overline{O}\setminus O$ of each (non closed) orbit is equidimensional of codimension $1$ in $\overline{O}$.

\smallskip
\noindent
A proof of this fact can be found in \cite[Lemmas 2.12--2.13]{Timashev}.
\end{proof}

Now we are in position to proceed with the proof of Theorems \ref{T1}\,(4) and \ref{C1}.

\begin{proof}[Proof of Theorem \ref{T1}\,(4)] The ``only if'' part is shown in Lemma \ref{L3.5}. For the inverse implication, let $\omega,\omega'\in\parameters$ be such that $\omega\preceq\omega'$. Repeated applications of Lemma \ref{L3.6}
yield a sequence of elements
\[\omega=\omega_0\prec \omega_1\prec\cdots\prec \omega_\ell=\omega'\]
such that $(\omega_{k-1},\omega_k)$ fits in one of the cases of Figure \ref{figure1} for all $k$.
Then, Lemma \ref{L3.4} shows that the following sequence of inclusions holds:
\[\overline{\Xorbit_{\omega_0}}\subset \overline{\Xorbit_{\omega_1}}\subset\cdots\subset \overline{\Xorbit_{\omega_\ell}}.\]
In particular, we get the desired inclusion $\overline{\Xorbit_\omega}\subset\overline{\Xorbit_{\omega'}}$.
\end{proof}

\begin{proof}[Proof of Theorem \ref{C1}]
Assume that condition (1) of Theorem \ref{C1} holds.
First, the equality
$\dim\Xorbit_{\omega'}=\dim\Xorbit_\omega+1$
follows from Lemma \ref{L3.7}.
Next, we have in particular $\omega\prec\omega'$ in view of Theorem \ref{T1}\,(4). Lemma \ref{L3.6} yields an element $\omega_0\in\parameters$ with $\omega\preceq\omega_0\preceq\omega'$ and such that $(\omega,\omega_0)$ or $(\omega_0,\omega')$ fits in one of the cases of Figure \ref{figure1}. By Theorem \ref{T1}\,(4) again, we get $\overline{\Xorbit_\omega}\subset\overline{\Xorbit_{\omega_0}}\subset\overline{\Xorbit_{\omega'}}$, and therefore $\omega=\omega_0$ or $\omega_0=\omega'$, due to the assumption that $\Xorbit_{\omega'}$ covers $\Xorbit_\omega$. In both cases this implies that the pair $(\omega,\omega')$ fits in one of the cases of Figure \ref{figure1}, that is, the graph $\mathcal{G}(\omega)$ is obtained from $\mathcal{G}(\omega')$ by one of the moves listed in Figure \ref{figure1}. This yields condition (2) of Theorem \ref{C1}.

Conversely, assume (2). By Lemma \ref{L3.4}, the inclusion $\overline{\Xorbit_\omega}\subset\overline{\Xorbit_{\omega'}}$ holds. This inclusion, combined with the fact that $\dim\Xorbit_{\omega'}=\dim\Xorbit_\omega+1$, implies that $\Xorbit_{\omega'}$ covers $\Xorbit_\omega$.
\end{proof}

\subsection{Further remarks}

\label{section-3.4}

(a)
In view of Lemma \ref{L3.1}, the results shown in Section \ref{section-3} establish the properties of
the $K$-orbits on $\Xfv$ as well as of the $B_K$-orbits on $\Grass(V,r)$.
Specifically, we obtain the decomposition
\[
\Grass(V,r)=\bigsqcup_{\omega\in\parameters} B_K\cdot[\omega].
\]
Note that $\Grass(V,r)$ is a fortiori a union of finitely many orbits for the action of $K$.
The description of these $K$-orbits is well known,
and it can be related to the decomposition into $B_K$-orbits in the following way.

Given $\omega\in\parameters$, we have introduced a matrix $R(\omega)=(r_{i,j}(\omega))_{\substack{0\leq i\leq p \\ 0\leq j\leq q}}$ which determines the orbit $B_K\cdot[\omega]$ (Theorem \ref{T1}\,(3)) and its closure relations with other orbits (Theorem \ref{T1}\,(4)).
In particular, the pair of integers $(r_{p,0}(\omega),r_{0,q}(\omega))$ can be expressed as
\[
(r_{p,0}(\omega),r_{0,q}(\omega))=(\dim [\omega]\cap V^+,\dim [\omega]\cap V^-),
\]
and we have actually
\[
K\cdot[\omega]=\{W\in\Grass(V,r):(\dim W\cap V^+,\dim W\cap V^-)=(r_{p,0}(\omega),r_{0,q}(\omega))\}.
\]
Thus
\[
K\cdot[\omega]=K\cdot[\omega']\Longleftrightarrow (r_{p,0}(\omega),r_{0,q}(\omega))=(r_{p,0}(\omega'),r_{0,q}(\omega')).
\]
Moreover,
\[
K\cdot[\omega]\subset\overline{K\cdot[\omega']}\Longleftrightarrow \big(r_{p,0}(\omega)\geq r_{p,0}(\omega')\quad\mbox{and}\quad r_{0,q}(\omega)\geq r_{0,q}(\omega')\big).
\]

Note that the number $s:=r_{p,0}(\omega)$ (resp.,
$t:=r_{0,q}(\omega)$) is the number of marks among the positive
(resp., negative) vertices of the graph $\mathcal{G}(\omega)$. 
In view of Theorem \ref{T1}\,(2), the
$B_K$-orbit $B_K\cdot[\omega]$ is dense in its $K$-saturation
$K\cdot[\omega]$ if and only if the degree of vertices is nondecreasing
from left to right along the row of positive (resp., negative)
vertices of $\mathcal{G}(\omega)$ (i.e., marked vertices are located on the
right and free vertices are located
on the left), and each pair of edges has a crossing. Thus there are
$\binom{k}{2}$ crossings, where $k:=r-(s+t)$ is the number of edges,
and we have
\[
\dim K\cdot[\omega]=\dim B_K\cdot[\omega]=(s+k)(p-s)+(t+k)(q-t)-k^2. 
\]

For example, if $p=q=r=2$, the variety $\Grass(V,r)$ is the union of six $K$-orbits.
In Figure \ref{figure5} we indicate the graphs $\mathcal{G}(\omega)$ corresponding to
the $B_K$-orbits $B_K\cdot[\omega]$ which are dense in their $K$-saturation $K\cdot[\omega]$,
the dimensions of the $K$-orbits, and the cover relations; this diagram is deduced from Figure \ref{figure2} given in Example \ref{E2.4}.
\begin{figure}[h]
\textcolor{blue}{\xymatrixcolsep{1pc}
\xymatrix{
\textcolor{darkgray}{\dim:4}
& & & & & \graphA \ar@/_/@{-}[lld] \ar@/^/@{-}[rrd]  & & & & & \\
\textcolor{darkgray}{3} & & & \graphB \ar@{-}[dd] \ar@{-}[rrd] & & & & \graphD \ar@{-}[lld] \ar@{-}[dd] & & & \\
\textcolor{darkgray}{2} & & & & & \graphG & & & & & \\
\textcolor{darkgray}{0} & & & \graphN & & & & \graphP & & &}}
\caption{The parameters of the $K$-orbits of $\Grass(V,r)$ and the cover relations for $p=q=r=2$.}
\label{figure5}
\end{figure}

(b) In \cite{Matsuki-Oshima},
Matsuki and Oshima classify the orbit set $K\backslash G/B$, where $B\subset G=\GL_{p+q}(\C)$ is a Borel subgroup.
It appears that our parametrization of $\Xfv/K\cong B_K\backslash G/P$ ressembles to theirs;
here $P\subset G$ denotes the maximal parabolic subgroup obtained as the stabilizer of an $r$-dimensional subspace, and which satisfies $B\subset P$.
In particular, by gluing orbits, Matsuki and Oshima's classification yields a parametrization of $K\backslash G/P$
which coincides with the one described in part (a) above.
We can speculate on a deeper relation between the two orbit sets
$K\backslash G/B$ and $B_K\backslash G/P$.
We also refer to \cite{Trapa-IMRN}, where the image of the moment map for the action of $K$ on $G/B$ is considered.


\section{Calculation of symmetrized and exotic Steinberg maps}

\label{section-4}

\subsection{Conormal direction}

As shown in Theorem \ref{T1}, every $K$-orbit in $\Xfv$ takes the form
\[
\Xorbit_\omega=K\cdot([\omega],\flag_0^+,\flag_0^-)
\]
for a matrix $\omega=\begin{pmatrix} \tau_1 \\ \tau_2 \end{pmatrix}\in \parameters=\ppermutationsof_{(p,q),r}/\permutationsof{r}$,
where $[\omega]\in\Grass(V,r)$ stands for the image of $\omega$ and $(\flag_0^+,\flag_0^-)\in\Flags(V^+)\times \Flags(V^-)$ is the pair of standard flags.
With the notation of Section \ref{section-1.2} we have
\[\nil([\omega])=\{x\in\gl(V):\mathrm{Im}\,x\subset[\omega]\subset\mathrm{Ker}\,x\},\quad \nil(\flag_0^+)=\fn_p^+,\quad \mbox{and}\quad \nil(\flag_0^-)=\fn_q^+\]
where $\fn_k^+\subset\gl_k(\C)$ denotes the subalgebra of strictly upper-triangular matrices.
Hence, the conormal bundle to the orbit $\Xorbit_\omega$ is obtained as
\[T^*_{\Xorbit_\omega}\Xfv=K\cdot\{([\omega],\flag_0^+,\flag_0^-,x):x\in\condir_\omega\}\]
where
\begin{equation}
\label{Domega}
\condir_\omega:=\left\{x=\begin{pmatrix} a & b \\ c & d\end{pmatrix}\in\gl(V):(a,d)\in\fn_p^+\times\fn_q^+,\ \mathrm{Im}\,x\subset[\omega]\subset\mathrm{Ker}\,x\right\}.
\end{equation}
This immediately implies:

\begin{lemma}
\label{L4.1}
Let $\omega\in\parameters$.
Then, $\Phi_\fk(\Xorbit_\omega)$, respectively $\Phi_\fs(\Xorbit_\omega)$,
is characterized as being the unique $K$-orbit of $\nilpotentsof{\fk}$, resp. $\nilpotentsof{\fs}$, which intersects the space $\{x_\fk:x\in\condir_\omega\}$, resp. $\{x_\fs:x\in\condir_\omega\}$, along a dense open subset.
\end{lemma}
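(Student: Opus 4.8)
The plan is to deduce the lemma from the $K$-equivariance of the two projections, together with the fact that the conormal directions (\ref{Domega}) already capture the generic behaviour of the whole conormal bundle. The first step is to record that $\phi_\fk$ is $K$-equivariant for the adjoint action on $\fk$: because the Cartan decomposition $\fg=\fk\oplus\fs$ is stable under $\mathrm{Ad}(K)$, the operator $\mathrm{Ad}(k)$ commutes with the projection $x\mapsto x_\fk$, so that $\phi_\fk(k\cdot(W,\flag^+,\flag^-,x))=\mathrm{Ad}(k)(x_\fk)$, and similarly for $\phi_\fs$. Feeding in the description $T^*_{\Xorbit_\omega}\Xfv=K\cdot\{([\omega],\flag_0^+,\flag_0^-,x):x\in\condir_\omega\}$, this gives
\[
\phi_\fk(T^*_{\Xorbit_\omega}\Xfv)=\mathrm{Ad}(K)\cdot Z_\fk
\quad\text{and}\quad
\phi_\fs(T^*_{\Xorbit_\omega}\Xfv)=\mathrm{Ad}(K)\cdot Z_\fs,
\]
where $Z_\fk=\{x_\fk:x\in\condir_\omega\}$ and $Z_\fs=\{x_\fs:x\in\condir_\omega\}$.

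Next I would observe that $\condir_\omega$ is a linear subspace of $\gl(V)$, since the defining conditions in (\ref{Domega})---namely $a\in\fn_p^+$, $d\in\fn_q^+$, $\mathrm{Im}\,x\subset[\omega]$ and $[\omega]\subset\mathrm{Ker}\,x$---are all linear in $x$. As $x\mapsto x_\fk$ and $x\mapsto x_\fs$ are linear as well, $Z_\fk$ and $Z_\fs$ are linear subspaces, hence irreducible; they are contained in $\nilpotentsof{\fk}$, resp. $\nilpotentsof{\fs}$ (the latter inclusion being \cite[Proposition 4.2]{FN1}). Since $K$ is connected, $\mathrm{Ad}(K)\cdot Z_\fk$ is irreducible, and by definition $\Phi_\fk(\Xorbit_\omega)$ is the unique $K$-orbit $\mathcal{O}$ that is open and dense in this image.

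The core step is to transfer this genericity from the $K$-saturation back to the slice $Z_\fk$. Because $\mathcal{O}$ is an orbit it is open in its closure $\overline{\mathcal{O}}=\overline{\mathrm{Ad}(K)\cdot Z_\fk}\supseteq Z_\fk$, so $\mathcal{O}\cap Z_\fk$ is open in $Z_\fk$; and it is nonempty, for $\mathcal{O}\subseteq\mathrm{Ad}(K)\cdot Z_\fk$ means any $y\in\mathcal{O}$ can be written $y=\mathrm{Ad}(k)z$ with $z\in Z_\fk$, whence $z=\mathrm{Ad}(k)^{-1}y\in\mathcal{O}\cap Z_\fk$ by $K$-stability of $\mathcal{O}$. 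Since $Z_\fk$ is irreducible, this nonempty open subset is dense, so $\mathcal{O}$ meets $Z_\fk$ in a dense open subset; and $\mathcal{O}$ is the only $K$-orbit with this property, as two such orbits would have dense---hence mutually intersecting---traces on the irreducible set $Z_\fk$, contradicting their disjointness. The argument for $\Phi_\fs$ and $Z_\fs$ is verbatim the same. I do not expect a real obstacle here: the one point deserving care is the $K$-invariance trick moving a generic point of $\mathcal{O}$ back inside $Z_\fk$, which is precisely what legitimises replacing the full image of the conormal bundle by the much smaller space $\{x_\fk:x\in\condir_\omega\}$.
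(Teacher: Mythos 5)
Your proof is correct and is precisely the argument the paper leaves implicit: the paper states Lemma \ref{L4.1} as an ``immediate'' consequence of the description $T^*_{\Xorbit_\omega}\Xfv=K\cdot\{([\omega],\flag_0^+,\flag_0^-,x):x\in\condir_\omega\}$, and your three ingredients --- $K$-equivariance of $\phi_\fk,\phi_\fs$ via $\mathrm{Ad}(K)$-stability of the Cartan decomposition, linearity (hence irreducibility) of $\condir_\omega$, and the transfer of the dense orbit from the saturation $\mathrm{Ad}(K)\cdot Z_\fk$ back to the slice $Z_\fk$ using $K$-stability of the orbit --- are exactly what makes that implication immediate. There is nothing to correct.
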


For the computation of the maps $\Phi_\fk$ and $\Phi_\fs$, we need a more detailed description of the conormal direction $\condir_\omega$.
We use the following notation: if $a$ is a $k\times \ell$ matrix, then for every subsets $R\subset\{1,\ldots,k\}$ and $S\subset\{1,\ldots,\ell\}$, we denote by $\subm{a}{R,S}$ the submatrix of $a$ formed by the coefficients $a_{i,j}$ with $i\in R$, $j\in S$, and we view it as a linear map from $\C^S$ to $\C^R$. Recall from Section \ref{section-2.1}
that $\omega$ gives rise to decompositions $\intp:=\{1,\ldots,p\}=I\cup L\cup L'$ and $\intq:=\{1,\ldots,q\}=J\cup M\cup M'$ and to a bijection $\sigma:J\to I$ which we view as a linear map from $\C^J$ to $\C^I$.

\begin{lemma}
\label{L4.2}
A matrix $x=\begin{pmatrix} a & b \\ c & d \end{pmatrix}$ (with $a\in\fn_p^+$, $d\in\fn_q^+$) belongs to $\mathcal{D}_\omega$ if and only if it satisfies the following conditions:
\begin{eqnarray}
&& \left\{\begin{array}{c} \subm{a}{\intp,L}=0,\ \subm{c}{\intq,L}=0,\ \subm{b}{\intp,M}=0,\ \subm{d}{\intq,M}=0,\\[1.5mm]
{} \subm{b}{\intp,J}=-\subm{a}{\intp,I}\sigma,\ \subm{d}{\intq,J}=-\subm{c}{\intq,I}\sigma, \end{array}\right. \label{4.1} \\[2mm]
&& \left\{\begin{array}{c} \subm{a}{L',\intp}=0,\ \subm{b}{L',\intq}=0,\ \subm{c}{M',\intp}=0,\ \subm{d}{M',\intq}=0, \\[1.5mm]
{} \subm{a}{I,\intp}=\sigma \subm{c}{J,\intp},\ \subm{b}{I,\intq}=\sigma\subm{d}{J,\intq}.
\end{array}\right. \label{4.2}
\end{eqnarray}
\end{lemma}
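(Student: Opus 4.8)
The plan is to read off the two incidence conditions that are hidden in the definition (\ref{Domega}) of $\condir_\omega$ and to translate each of them into linear equations on the blocks $a,b,c,d$. Indeed, apart from the requirement $(a,d)\in\fn_p^+\times\fn_q^+$ (which appears verbatim both in the definition and in the statement), membership of $x$ in $\condir_\omega$ amounts to the two conditions $[\omega]\subset\mathrm{Ker}\,x$ and $\mathrm{Im}\,x\subset[\omega]$. I would handle these separately, using the explicit description of $[\omega]$ already obtained in the proof of Theorem \ref{T1}: the subspace $[\omega]$ is spanned by the vectors $e_a^+$ for $a\in L$, by the vectors $e_c^-$ for $c\in M$, and by the vectors $e_{\sigma(j)}^++e_j^-$ for $j\in J$.

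First I would treat the kernel condition $[\omega]\subset\mathrm{Ker}\,x$, which says exactly that $x$ annihilates each of the spanning vectors above. Since $x(e_a^+)$ is the $a$-th column of $\begin{pmatrix} a \\ c\end{pmatrix}$ and $x(e_c^-)$ is the $c$-th column of $\begin{pmatrix} b \\ d\end{pmatrix}$, the equations $x(e_a^+)=0$ ($a\in L$) give $\subm{a}{\intp,L}=0$ and $\subm{c}{\intq,L}=0$, while $x(e_c^-)=0$ ($c\in M$) give $\subm{b}{\intp,M}=0$ and $\subm{d}{\intq,M}=0$. Finally $x(e_{\sigma(j)}^++e_j^-)=0$ ($j\in J$) equates the $j$-th column of $\begin{pmatrix} b\\ d\end{pmatrix}$ with the negative of the $\sigma(j)$-th column of $\begin{pmatrix} a\\ c\end{pmatrix}$; packaging these column identities over all $j\in J$, with the matching $j\leftrightarrow\sigma(j)$ encoded as right multiplication by $\sigma\colon\C^J\to\C^I$, yields precisely $\subm{b}{\intp,J}=-\subm{a}{\intp,I}\sigma$ and $\subm{d}{\intq,J}=-\subm{c}{\intq,I}\sigma$. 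This is exactly (\ref{4.1}).

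Next I would treat the image condition $\mathrm{Im}\,x\subset[\omega]$ by the dual recipe: it is equivalent to $f\circ x=0$ for every functional $f$ in the annihilator $[\omega]^\perp$, which is spanned by $(e_a^+)^*$ for $a\in L'$, by $(e_c^-)^*$ for $c\in M'$, and by $(e_i^+)^*-(e_{\sigma^{-1}(i)}^-)^*$ for $i\in I$. Here $f\circ x$ reads off the rows of $x$ in the same way that $x(e)$ read off its columns, so the vanishing of these row combinations produces $\subm{a}{L',\intp}=0$, $\subm{b}{L',\intq}=0$, $\subm{c}{M',\intp}=0$, $\subm{d}{M',\intq}=0$, together with $\subm{a}{I,\intp}=\sigma\subm{c}{J,\intp}$ and $\subm{b}{I,\intq}=\sigma\subm{d}{J,\intq}$; this is exactly (\ref{4.2}). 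Equivalently, (\ref{4.2}) is (\ref{4.1}) transported through the transpose $x\mapsto x^T$, which interchanges $\mathrm{Im}$ and $\mathrm{Ker}$ and swaps columns with rows, but writing out the functionals directly seems cleaner.

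Since each spanning vector (resp.\ functional) furnishes an honest equivalence between ``$x$ kills it'' and a block equation, the conjunction of all these equations is equivalent to the pair of incidence conditions, hence to membership in $\condir_\omega$. The only genuine work is bookkeeping: keeping the index sets $I,L,L',J,M,M'$ straight and getting the direction of the multiplication by $\sigma$ right (left versus right, and $\sigma$ versus $\sigma^{-1}$) in the edge equations. I expect this indexing to be the sole point requiring care, while everything else is a direct unwinding of the definitions.
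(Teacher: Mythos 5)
Your proposal is correct and takes essentially the same approach as the paper: the paper's proof likewise writes out $[\omega]=\Span{e_i^+:i\in L;\ e_j^-:j\in M;\ e_{\sigma(j)}^++e_j^-:j\in J}$ and observes that the inclusion $[\omega]\subset\mathrm{Ker}\,x$ is equivalent to (\ref{4.1}) while $\mathrm{Im}\,x\subset[\omega]$ is equivalent to (\ref{4.2}). Your explicit annihilator computation for the image condition merely fills in bookkeeping the paper leaves implicit, and your sign conventions and the direction of multiplication by $\sigma$ (right multiplication in (\ref{4.1}), left multiplication with $\sigma^{-1}$ matching rows in (\ref{4.2})) all check out against the stated equations.
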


\begin{proof}
Let $(e_1^+,\ldots,e_p^+)$ be the standard basis of $V^+=\C^p\times\{0\}^q$ and let $(e_1^-,\ldots,e_q^-)$ be the standard basis of $V^-=\{0\}^p\times\C^q$. Then
\begin{equation}
\label{4.3}
[\omega]=\Span{e_i^+:i\in L;\quad e_j^-:j\in M;\quad e_{\sigma(j)}^++e_j^-:j\in J}.
\end{equation}
For every matrix $x$ such that $a\in\fn_p^+$ and $d\in\fn_q^+$, we have $x\in\condir_\omega$ if and only if $\mathrm{Im}\,x\subset[\omega]\subset \mathrm{Ker}\,x$, and in view of (\ref{4.3}) the second inclusion is equivalent to (\ref{4.1}) while the first inclusion is equivalent to (\ref{4.2}).
\end{proof}

Figure \ref{figure4} illustrates the form of the elements in the conormal direction $\mathcal{D}_\omega$. The matrix is represented blockwise with blocks indicating the submatrices $\subm{X}{R,S}$ relative to the subsets
$R,S\in\{I,J,L,L',M,M'\}$. Note that in the decompositions $I\cup L\cup L'=\intp$ and $J\cup M\cup M'=\intq$ the subsets are not consecutive,
and the matrix is thus represented modulo permutation within the rows and the columns. In particular, it is required in addition that the blocks $a$ and $d$ be strictly upper triangular.
\begin{figure}[h]
\[
\begin{blockarray}{ccccccc}
{} & {\quad I\quad} & {\quad L\quad} & {\quad L'\quad} & {\quad J\quad} & {\quad M\quad} & {\quad M'\quad} \\
\begin{block}{c(c|c|c||c|c|c)}
I & \sigma c_1 & 0 & \sigma c_2 & -\sigma c_1\sigma & 0 & \sigma d_2 \\
\cline{2-7}
L & a_3 & 0 & a_4 & -a_3\sigma & 0 & b_4 \\
\cline{2-7}
L' & 0 & 0 & 0 & 0 & 0 & 0 \\
\cline{2-7} \\[-12pt] \cline{2-7}
J & c_1 & 0 & c_2 & -c_1\sigma & 0 & d_2 \\
\cline{2-7}
M & c_3 & 0 & c_4 & -c_3\sigma & 0 & d_4 \\
\cline{2-7}
M' & 0 & 0 & 0 & 0 & 0 & 0 \\
\end{block}
\end{blockarray}
\]
\caption{Form of the elements $x=\left({}^{a \ b}_{c \ d}\right)$ (with $a\in\fn_p^+$, $d\in\fn_q^+$) belonging to the conormal direction $\condir_\omega$.}
\label{figure4}
\end{figure}

\subsection{A review of the orbit sets, and an involution}
As explained in Section \ref{section-2.3}, the orbits of $K$ in the nilpotent cone
\[
\nilpotentsof{\fk}\subset\fk=\left\{\begin{pmatrix} a & 0 \\ 0 & d \end{pmatrix}: (a,d)\in \gl_p(\C)\times\gl_q(\C)\right\}
\]
are parametrized by pairs of partitions $(\lambda,\mu)\in\partitionsof{p}\times\partitionsof{q}$, and we denote by $\Nkorbit_{\lambda,\mu}$ the orbit corresponding to the pair $(\lambda,\mu)$.
Note that
\begin{equation}
\label{4.5new}
\begin{pmatrix} a & 0 \\ 0 & d\end{pmatrix}\in\Nkorbit_{\lambda,\mu}\Longleftrightarrow \begin{pmatrix} d & 0 \\ 0 & a \end{pmatrix}\in\Nkorbit_{\mu,\lambda},
\end{equation}
though here the notation $\Nkorbit_{\mu,\lambda}$ refers to an orbit of $K^*:=\GL_q(\C)\times\GL_p(\C)$ on $\gl_q(\C)\times\gl_p(\C)$.

Recall also from Section \ref{section-2.3} that the orbits of $K$ in the nilpotent cone
\[
\nilpotentsof{\fs}\subset\fs=\left\{\begin{pmatrix} 0 & b \\ c & 0 \end{pmatrix}: (b,c)\in \Mat_{p,q}(\C)\times\Mat_{q,p}(\C)\right\}
\]
are parametrized by signed Young diagrams of signature $(p,q)$, and we denote by $\Nsorbit_\Lambda$ the orbit corresponding to $\Lambda$. We have
\begin{equation}
\label{4.6}
\begin{pmatrix} 0 & b \\ c & 0 \end{pmatrix}\in\Nsorbit_\Lambda\Longleftrightarrow \begin{pmatrix} 0 & c \\ b & 0 \end{pmatrix}\in\Nsorbit_{\Lambda^*}
\end{equation}
where $\Lambda^*$ denotes the signed Young diagram of signature $(q,p)$ obtained from $\Lambda$ by switching the $+$'s and the $-$'s, and $\Nsorbit_{\Lambda^*}$ is an orbit of the group $K^*$.

As shown in Theorem \ref{T1}, the orbits of $K$ in
\[
\Xfv=\Grass(V,r)\times\Flags(V^+)\times\Flags(V^-)
\]
are parametrized by the elements of $\parameters$, and
$\Xorbit_\omega$ is the orbit corresponding to $\omega$. If
$\omega=\begin{pmatrix} \tau_1 \\ \tau_2 \end{pmatrix}$ is an
element of
$\parameters=\ppermutationsof_{(p,q),r}/\permutationsof{r}$, then
$\omega^*:=\begin{pmatrix} \tau_2 \\ \tau_1 \end{pmatrix}$ is an
element of
$\parameters^*:=\ppermutationsof_{(q,p),r}/\permutationsof{r}$ which
thus yields an orbit $\Xorbit_{\omega^*}$ of $K^*$ in a suitable
multiple flag variety $\Xfv^*$. The graphic representation
$\mathcal{G}(\omega^*)$ of $\omega^*$ is obtained from
$\mathcal{G}(\omega)$ by switching the two rows of vertices, i.e.,
by relabeling every vertex $i^+$ (resp., $j^-$) as $i^-$ (resp.,
$j^+$). This implies that, if $(I,J,L,L',M,M',\sigma)$ are the data
corresponding to $\omega$ in the sense of Section \ref{section-2.1},
then the relevant data for $\omega^*$ are
\begin{equation}
\label{4.7}
(I^*,J^*,L^*,L'^*,M^*,M'^*,\sigma)=(J,I,M,M',L,L',\sigma^{-1}).
\end{equation}

Finally, by the description of the conormal direction in (\ref{Domega}), we have
\begin{equation}
\label{4.8}
\begin{pmatrix} a & b \\ c & d \end{pmatrix}\in\condir_\omega\Longleftrightarrow
\begin{pmatrix} d & c \\ b & a \end{pmatrix}\in\condir_{\omega^*}.
\end{equation}

The observations made in (\ref{4.5new}), (\ref{4.6}), (\ref{4.8}) combined with Lemma \ref{L4.1} yield the following statement:

\begin{lemma}
\label{L4.3}
\begin{enumerate}
\item If $\Phi_\fk(\Xorbit_\omega)=\Nkorbit_{\lambda,\mu}$, then $\Phi_\fk(\Xorbit_{\omega^*})=\Nkorbit_{\mu,\lambda}$.
\item If $\Phi_\fs(\Xorbit_\omega)=\Nsorbit_\Lambda$, then $\Phi_\fs(\Xorbit_{\omega^*})=\Nsorbit_{\Lambda^*}$.
\end{enumerate}
\end{lemma}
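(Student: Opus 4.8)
The plan is to derive both statements simultaneously from the block-swap involution already implicit in (\ref{4.5new}), (\ref{4.6}), and (\ref{4.8}), together with the characterization of the two Steinberg maps provided by Lemma \ref{L4.1}. First I would introduce the linear involution
\[
\theta\begin{pmatrix} a & b \\ c & d \end{pmatrix}=\begin{pmatrix} d & c \\ b & a \end{pmatrix},
\]
which conjugates the $(p,q)$-setup into the $(q,p)$-setup considered before the lemma. Being linear and bijective, $\theta$ is a homeomorphism for the Zariski topology, and it carries the block-diagonal part of a matrix to the block-diagonal part of its image and the off-diagonal part to the off-diagonal part; thus $\theta(x_\fk)=\theta(x)_\fk$ and $\theta(x_\fs)=\theta(x)_\fs$ for every $x$.

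For part (1), set $S=\{x_\fk:x\in\condir_\omega\}$ and $S^*=\{y_\fk:y\in\condir_{\omega^*}\}$. By (\ref{4.8}) and the previous remark, $\theta$ restricts to a linear isomorphism of $S$ onto $S^*$. Assume $\Phi_\fk(\Xorbit_\omega)=\Nkorbit_{\lambda,\mu}$; by Lemma \ref{L4.1} the orbit $\Nkorbit_{\lambda,\mu}$ meets $S$ in a dense open subset. Applying the homeomorphism $\theta$, and using (\ref{4.5new}) --- which says precisely that $\theta(\Nkorbit_{\lambda,\mu})$ is the $K^*$-orbit $\Nkorbit_{\mu,\lambda}$ --- we deduce that $\Nkorbit_{\mu,\lambda}$ meets $S^*=\theta(S)$ in a dense open subset. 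Since the starred data constitute an instance of the very same construction, relative to $K^*$, Lemma \ref{L4.1} applies to them as well, and its uniqueness clause forces $\Phi_\fk(\Xorbit_{\omega^*})=\Nkorbit_{\mu,\lambda}$. Part (2) is proved word for word, replacing $\fk$, $\Nkorbit_{\lambda,\mu}$, $\Nkorbit_{\mu,\lambda}$, and (\ref{4.5new}) by $\fs$, $\Nsorbit_\Lambda$, $\Nsorbit_{\Lambda^*}$, and (\ref{4.6}).

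There is essentially no obstacle: the argument is purely formal once the dictionary (\ref{4.5new})--(\ref{4.8}) is in place, since that dictionary expresses exactly that $\theta$ intertwines the unstarred and starred setups, both on the nilpotent orbits and on the conormal slices. The only points deserving attention are that $\theta$ genuinely sends $S$ onto $S^*$ --- which is (\ref{4.8}) read on the block-diagonal (resp. off-diagonal) components --- and that a linear isomorphism preserves the property of being a dense open subset, so that the characterization of Lemma \ref{L4.1} transfers verbatim to the starred side.
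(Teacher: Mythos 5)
Your proposal is correct and follows exactly the route the paper takes: the paper derives Lemma \ref{L4.3} directly from the observations (\ref{4.5new}), (\ref{4.6}), (\ref{4.8}) combined with the characterization in Lemma \ref{L4.1}, and your argument simply makes explicit the block-swap involution $\theta$ and the transfer of the dense-open intersection property that this combination implicitly uses (including the point, noted in the paper as an abuse of notation, that Lemma \ref{L4.1} applies verbatim to the starred setup for $K^*$).
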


There is an abuse of notation in that statement, since we use the notation $\Phi_\fk$ and $\Phi_\fs$ to designate also the symmetrized and exotic Steinberg maps relative to the symmetric pair $(G,K^*)=(\GL_{p+q}(\C),\GL_q(\C)\times\GL_p(\C))$.

\subsection{Symmetrized Steinberg map $\Phi_\fk$}

For a permutation $w\in\permutationsof{k}$ we consider the space
\[
\fn_k^+\cap({}^w\fn_k^+):=\fn_k^+\cap(\mathrm{Ad}(w)(\fn_k^+))=\{a\in\fn_k^+:w^{-1}aw\in\fn_k^+\}.
\]
The following is a well-known fact from classical Steinberg theory.

\begin{theorem}[\cite{Steinberg-occurrence}]
\label{T-Steinberg}
The unique nilpotent orbit $\mathcal{O}_\lambda\subset \gl_k(\C)$ which intersects the space $\fn_k^+\cap({}^w\fn_k^+)$ along a dense open subset is the one corresponding to the Young diagram
$\lambda=\shape(\RS_1(w))=\shape(\RS_2(w))$, where $(\RS_1(w),\RS_2(w))$ denotes the pair of Young tableaux associated to $w$ via the Robinson--Schensted correspondence.
\end{theorem}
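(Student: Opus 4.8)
The plan is to reduce the statement to a computation of the \emph{generic Jordan type} of the matrices in the linear space $\fn_k^+\cap({}^w\fn_k^+)$, and then to identify that Jordan type with $\shape(\RS_1(w))$ by means of Greene's theorem on increasing and decreasing subsequences. First I would make the support of a generic element explicit: writing $a=(a_{i,j})$, the two conditions $a\in\fn_k^+$ and $w^{-1}aw\in\fn_k^+$ hold simultaneously if and only if $a_{i,j}$ may be nonzero only when $i<j$ and $w^{-1}(i)<w^{-1}(j)$. Thus $\fn_k^+\cap({}^w\fn_k^+)$ is the space $N_P$ of matrices supported on the strict relations of the partial order $P=P_w$ on $\{1,\dots,k\}$ defined by $i<_Pj\iff(i<j\ \text{and}\ w^{-1}(i)<w^{-1}(j))$. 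Since $N_P$ is an irreducible (linear) variety meeting the nilpotent cone $\nilpotentsof{\gl_k(\C)}$ in finitely many orbits, exactly one orbit $\mathcal{O}_\pi$ meets $N_P$ in a dense open subset, and $\pi$ is the Jordan type of a generic $a\in N_P$. By the parametrization recalled in Section~\ref{section-2.3}, $\pi$ is determined by the kernel dimensions $\dim\ker a^c$ (the number of boxes in the first $c$ columns of $\pi$). So it suffices to compute $\dim\ker a^c$ for generic $a\in N_P$ and to check it equals the number of boxes in the first $c$ columns of $\lambda:=\shape(\RS_1(w))$.

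The combinatorial heart is the identity, valid for generic $a\in N_P$,
\[
\dim\ker a^c=\alpha_c(P):=\max\{|A_1\cup\dots\cup A_c|:A_1,\dots,A_c\ \text{antichains of }P\}.
\]
For the inequality $\dim\ker a^c\le\alpha_c(P)$ I would invoke lower semicontinuity of the rank, so that generic rank is maximal and it is enough to exhibit a single $a_0\in N_P$ with $\rank(a_0^c)$ large. Taking $a_0$ supported on the cover relations of a chain partition $\mathcal{C}$ of $P$ makes $a_0$ a direct sum of Jordan blocks of sizes $|C|$ ($C\in\mathcal{C}$), whence $\rank(a_0^c)=\sum_C\max(|C|-c,0)=k-\sum_C\min(|C|,c)$; choosing $\mathcal{C}$ optimal and using the Greene--Kleitman min--max identity $\alpha_c(P)=\min_{\mathcal{C}}\sum_C\min(|C|,c)$ gives $\rank(a_0^c)=k-\alpha_c(P)$, hence $\dim\ker a^c\le\alpha_c(P)$ generically. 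The reverse inequality $\dim\ker a^c\ge\alpha_c(P)$ for \emph{every} $a\in N_P$ is the delicate point: here I would expand the $s\times s$ minors of $a^c$ by the Lindström--Gessel--Viennot lemma in the layered graph encoding $c$-step chains of $P$, so that a nonvanishing minor forces a system of $s$ noncrossing length-$c$ paths, and bound $s\le k-\alpha_c(P)$.

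To finish, I would translate everything back to $w$ through the dictionary $i\leftrightarrow(w^{-1}(i),i)$, under which $<_P$ becomes the componentwise order on the plane points of the permutation: chains of $P$ are then the increasing subsequences of $w$ and antichains are the decreasing subsequences. Consequently $\alpha_c(P)$ is the largest size of a union of $c$ decreasing subsequences of $w$, and Greene's theorem equates this with $(\lambda')_1+\dots+(\lambda')_c$, the number of boxes in the first $c$ columns of $\lambda=\shape(\RS_1(w))$. Matching the two expressions for $\dim\ker a^c$ for all $c\ge 1$ yields $\pi=\lambda$, so the dense orbit in $\fn_k^+\cap({}^w\fn_k^+)$ is $\mathcal{O}_\lambda$ with $\lambda=\shape(\RS_1(w))=\shape(\RS_2(w))$, as claimed.

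I expect the main obstacle to be the uniform lower bound $\dim\ker a^c\ge\alpha_c(P)$, equivalently the upper bound on $\rank(a^c)$: it is essentially equivalent in strength to Greene's theorem itself, and the cancellation in the minor expansion (noncrossing versus genuinely vertex-disjoint paths) must be handled on the layered graph rather than on $P$ directly. An alternative route, which bypasses this computation, is Steinberg's original geometric argument: realize $\fn_k^+\cap({}^w\fn_k^+)$ as the fibre over the relative-position-$w$ stratum of the Steinberg variety $Z\subset\nilpotentsof{\gl_k(\C)}\times\Flags\times\Flags$, note that the components $\overline{Z_w}$ are equidimensional and dominate nilpotent orbit closures $\overline{\mathcal{O}_{\lambda(w)}}$, and identify the assignment $w\mapsto\lambda(w)$ with the RS shape through Spaltenstein's labelling of Springer-fibre components by standard tableaux; there the obstacle is transferred to showing that this geometric labelling coincides with Robinson--Schensted insertion.
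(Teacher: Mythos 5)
The paper contains no proof of this statement to compare yours against: Theorem \ref{T-Steinberg} is imported verbatim from Steinberg \cite{Steinberg-occurrence} and used as a black box (in the proofs of Theorem \ref{T2} and Proposition \ref{P4.6}). So I assess your proposal on its own merits. Its architecture is sound and is essentially the standard Gansner--Saks route, specialized to the permutation poset: your support computation showing $\fn_k^+\cap({}^w\fn_k^+)$ is exactly the space of matrices supported on the strict relations of $P_w$ is correct; the unique-dense-orbit argument is correct; the upper bound $\dim\ker a^c\le\alpha_c(P_w)$ via a chain-partition witness, the Greene--Kleitman min--max identity, and semicontinuity of rank is correct (one small repair: the optimal chain partition, hence the witness $a_0$, depends on $c$, so you must intersect the finitely many dense open sets, one for each $c\le k$, to get a single generic $a$ realizing all the bounds simultaneously); and the final dictionary --- chains/antichains of $P_w$ correspond to increasing/decreasing subsequences of $w$, so Greene's subsequence theorem identifies $\alpha_c(P_w)$ with the number of boxes in the first $c$ columns of $\shape(\RS_1(w))$ --- is correct, with the identity $\shape(\RS_1(w))=\shape(\RS_2(w))$ disposing of any $w$ versus $w^{-1}$ ambiguity. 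This is close in spirit to Steinberg's original argument, which also pivots on Greene's theorem via generic rank computations.

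The genuine gap is the one you flag yourself: the uniform bound $\rank(a^c)\le k-\alpha_c(P_w)$ for \emph{every} $a$ in the space. As written, the Lindstr\"om--Gessel--Viennot step does not close it: nonvanishing of an $s\times s$ minor of $a^c$ does force $s$ pairwise vertex-disjoint paths in the layered acyclic digraph, but two such paths may pass through the same element of $P_w$ at \emph{different} layers, so their supports need not form $s$ disjoint $(c{+}1)$-element chains, and the naive count (``each chain meets a union of $c$ antichains in at most $c$ elements'') gives no injection of paths into the complement of an optimal antichain union. Finishing along these lines requires a genuinely additional input: for instance Menger's theorem on the layered digraph together with the construction of a vertex cut of size $k-\alpha_c(P_w)$, which in turn rests on the existence of orthogonal optimal chain/antichain families (Frank's structure theorem underlying Greene--Kleitman duality) --- or one simply quotes the theorem of Gansner and of Saks that the generic Jordan type of a nilpotent matrix supported on a poset is the Greene--Kleitman partition. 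Given that the paper itself cites this result to Steinberg, replacing the citation by a citation of Gansner--Saks plus Greene is legitimate; but as a self-contained proof, your write-up is incomplete precisely at its load-bearing inequality, and the alternative geometric route you sketch (Steinberg variety, Spaltenstein labelling) merely relocates, rather than removes, the same difficulty.
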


In our situation, we consider the permutations $w_{\fk,+}\in\permutationsof{p}$ and $w_{\fk,-}\in\permutationsof{q}$
of (\ref{wk+}) and (\ref{wk-}), associated to an element $\omega\in\parameters$.

\begin{lemma}
\label{L4.5}
Let $a\in\fn_p^+$. The following conditions are equivalent:
\begin{enumerate}
\item There are matrices $b,c,d$ such that $x:=\begin{pmatrix} a & b \\ c & d\end{pmatrix}\in\condir_\omega$;
\item $\sigma^{-1}\subm{a}{I,I}\sigma$ is strictly upper triangular, $\subm{a}{L',\intp}=0$, and $\subm{a}{\intp,L}=0$;
\item $a\in \fn_p^+\cap({}^{w_{\fk,+}}\fn_p^+)$.
\end{enumerate}
\end{lemma}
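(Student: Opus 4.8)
The plan is to prove the two equivalences separately: (1) $\Leftrightarrow$ (2) from the explicit description of $\condir_\omega$ in Lemma \ref{L4.2} (see also Figure \ref{figure4}), and (2) $\Leftrightarrow$ (3) by a direct blockwise analysis driven by the definition of $w_{\fk,+}$ in (\ref{wk+}). In both equivalences the hypothesis $a\in\fn_p^+$ is kept throughout, so only the extra conditions need to be matched.

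For (1) $\Leftrightarrow$ (2) I would argue through Lemma \ref{L4.2}. In the direction (1) $\Rightarrow$ (2), assuming $x\in\condir_\omega$ with upper-left block $a$, the two conditions $\subm{a}{\intp,L}=0$ and $\subm{a}{L',\intp}=0$ of (2) are read off directly from (\ref{4.1}) and (\ref{4.2}); for the triangularity condition I restrict $\subm{a}{I,\intp}=\sigma\subm{c}{J,\intp}$ of (\ref{4.2}) to the columns $I$, giving $\subm{c}{J,I}=\sigma^{-1}\subm{a}{I,I}$, and restrict $\subm{d}{\intq,J}=-\subm{c}{\intq,I}\sigma$ of (\ref{4.1}) to the rows $J$, giving $\subm{d}{J,J}=-\subm{c}{J,I}\sigma=-\sigma^{-1}\subm{a}{I,I}\sigma$; since $d\in\fn_q^+$, its diagonal block $\subm{d}{J,J}$ is strictly upper triangular for the order induced by $J\subset\intq$, hence so is $\sigma^{-1}\subm{a}{I,I}\sigma$. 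In the direction (2) $\Rightarrow$ (1), given $a$ satisfying (2) I produce $b,c,d$ following the template of Figure \ref{figure4}, setting to zero every free parameter except those prescribed by $a$: take $\subm{c}{J,\intp}=\sigma^{-1}\subm{a}{I,\intp}$ with $c$ vanishing on the rows $M,M'$; take $\subm{b}{\intp,J}=-\subm{a}{\intp,I}\sigma$ with $b$ vanishing on the columns $M,M'$ and the rows $L'$; and take $d$ supported only on the block $\subm{d}{J,J}=-\sigma^{-1}\subm{a}{I,I}\sigma$. A direct verification shows that all equations of (\ref{4.1})--(\ref{4.2}) hold (using $\subm{a}{\intp,L}=0$ and $\subm{a}{L',\intp}=0$ to kill the relevant blocks); the only substantive point is $d\in\fn_q^+$, which holds precisely because $\subm{d}{J,J}$ is strictly upper triangular, i.e. because of the triangularity hypothesis in (2).

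For (2) $\Leftrightarrow$ (3): by definition $a\in\fn_p^+\cap({}^{w_{\fk,+}}\fn_p^+)$ means $a\in\fn_p^+$ together with $a_{ij}=0$ whenever $w_{\fk,+}^{-1}(i)\geq w_{\fk,+}^{-1}(j)$. Reading off (\ref{wk+}), the positions $w_{\fk,+}^{-1}$ partition $\intp$ into three consecutive intervals occupied by $L$, then $I$, then $L'$, with the natural order reversed inside $L$ and inside $L'$, and with $I$ ordered as $\sigma(j_1),\ldots,\sigma(j_k)$. Splitting $a$ into the nine blocks indexed by $\{L,I,L'\}^2$ and comparing the natural order of indices with this position order block by block, I would show that $i<j$ and $w_{\fk,+}^{-1}(i)<w_{\fk,+}^{-1}(j)$ are incompatible exactly on the columns indexed by $L$ and the rows indexed by $L'$ (forcing $\subm{a}{\intp,L}=0$ and $\subm{a}{L',\intp}=0$), are automatic on the blocks $(L,I)$, $(L,L')$, $(I,L')$, and on the block $(I,I)$ reduce to $a_{\sigma(j_m),\sigma(j_n)}=0$ unless $m<n$. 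This last condition is precisely the strict upper triangularity of $\sigma^{-1}\subm{a}{I,I}\sigma$, so (2) and (3) coincide term by term.

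The main obstacle is the bookkeeping underlying (1) $\Leftrightarrow$ (2): one must track which block equations of Lemma \ref{L4.2} constrain $a$ alone, which tie $a$ to $b$ and $c$, and which propagate to $d$, and then check that the single triangularity condition on $\sigma^{-1}\subm{a}{I,I}\sigma$ is simultaneously the only obstruction to extending $a$ to an element of $\condir_\omega$ and the exact shadow of the requirement $d\in\fn_q^+$. Once the three intervals and their internal orderings are pinned down, the order comparison in (2) $\Leftrightarrow$ (3) is elementary.
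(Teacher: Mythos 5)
Your proof is correct and takes essentially the same route as the paper: (1)$\Leftrightarrow$(2) is read off from the block equations of Lemma \ref{L4.2}, and (2)$\Leftrightarrow$(3) is proved by matching, for $i<j$, the condition $w_{\fk,+}^{-1}(i)>w_{\fk,+}^{-1}(j)$ against the alternatives $i\in L'$, $j\in L$, or $i,j\in I$ with $\sigma^{-1}(i)>\sigma^{-1}(j)$, exactly as in the paper's case analysis of (\ref{wk+}). Your explicit construction of $b,c,d$ for (2)$\Rightarrow$(1) (with $\subm{c}{J,\intp}=\sigma^{-1}\subm{a}{I,\intp}$ and $d$ supported on the $(J,J)$ block equal to $-\sigma^{-1}\subm{a}{I,I}\sigma$) is a valid and welcome expansion of a verification the paper leaves implicit by simply citing Lemma \ref{L4.2} and Figure \ref{figure4}.
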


\begin{proof}
The equivalence between (1) and (2) is implied by Lemma \ref{L4.2} (see Figure \ref{figure4}).
Given $a\in\fn_p^+$, condition (2) is equivalent to:
\begin{equation*}
\left(1\leq i<j\leq p\quad\mbox{and}\quad\left\{\begin{array}{l}
i\in L' \\ \mbox{or}\quad j\in L\\
\mbox{or}\quad(i,j\in I\ \ \mbox{and}\ \ \sigma^{-1}(i)>\sigma^{-1}(j))
\end{array}\right.\right)
\quad\implies\quad a_{i,j}=0.
\end{equation*}
Condition (3) is equivalent to:
\begin{equation*}
1\leq i<j\leq p\quad \mbox{and}\quad w_{\fk,+}^{-1}(i)> w_{\fk,+}^{-1}(j)\quad \implies\quad  a_{i,j}=0.
\end{equation*}
By definition of $w_{\fk,+}$ (see (\ref{wk+})), for $1\leq i<j\leq p$, we have
\[w_{\fk,+}^{-1}(i)> w_{\fk,+}^{-1}(j)\quad\iff\quad \left\{\begin{array}{l}
i\in L' \\ \mbox{or}\quad j\in L\\
\mbox{or}\quad(i,j\in I\ \ \mbox{and}\ \ \sigma^{-1}(i)>\sigma^{-1}(j)).
\end{array}\right.\]
Therefore, (2) and (3) are equivalent.
\end{proof}

\begin{proof}[Proof of Theorem \ref{T2}\,(1)]
Let
\[\Phi_\fk(\Xorbit_\omega)=\Nkorbit_{\lambda,\mu}=\left\{\begin{pmatrix} a & 0 \\ 0 & d \end{pmatrix}:(a,d)\in\mathcal{O}_\lambda\times\mathcal{O}_\mu\subset\gl_p(\C)\times\gl_q(\C)\right\}.\]
By Lemmas \ref{L4.1} and \ref{L4.5}, the nilpotent orbit $\mathcal{O}_\lambda\subset\gl_p(\C)$ is characterized as being the unique $\GL_p(\C)$-orbit which intersects the space
\[\left\{a\in\fn_p^+:\exists b,c,d\mbox{ such that }\begin{pmatrix} a & b \\ c & d \end{pmatrix}\in\condir_\omega\right\}=\fn_p^+\cap({}^{w_{\fk,+}}\fn_p^+)\]
along a dense open subset. By Theorem \ref{T-Steinberg}, this implies that
\[\lambda=\shape(\RS_1(w_{\fk,+})).\]
By (\ref{4.7}) and Lemma \ref{L4.3}, we also deduce that
\[\mu=\shape(\RS_1(w_{\fk,-})).\]
The proof of Theorem \ref{T2}\,(1) is complete.
\end{proof}

\subsection{Exotic Steinberg map $\Phi_\fs$}

We introduce notation which extend our notation on matrices.
Given subsets $R,S$ of integers, we let $\Mat_{R,S}(\C)$ denote the space of linear homomorphisms $x:\C^S\to \C^R$, which can be viewed as well as matrices of coefficients $(x_{i,j})_{(i,j)\in R\times S}$.
Let $\fn_R^+\subset\Mat_{R,R}(\C)$ be the subspace of endomorphisms that are strictly upper triangular as matrices. 

If $R,S$ are respectively subsets of $R',S'$, we denote by
\begin{equation}
\label{eta}
\eta_{R,S}^{R',S'}:\Mat_{R,S}(\C)\to\Mat_{R',S'}(\C),\quad x\mapsto \hat{x}=(\hat{x}_{i,j})_{(i,j)\in R'\times S'}
\end{equation}
the linear morphism which maps a matrix $x$ to its {\em extension by zero} given by
$\hat{x}_{i,j}=x_{i,j}$ if $(i,j)\in R\times S$ and $\hat{x}_{i,j}=0$ otherwise.

A bijection $w:S\to R$ yields an element of $\Mat_{R,S}(\C)$ also denoted by $w$ by abuse of notation. In addition, through the Robinson--Schensted algorithm, $w$ gives rise to a pair of Young tableaux $(\RS_1(w),\RS_2(w))$ of same shape, whose respective sets of entries are $R$ and $S$.

The following is a reformulation of Theorem \ref{T-Steinberg}.

\begin{proposition}
\label{P4.6}
Given a bijection $w: S\to R$, the Jordan normal form of a general element $x$ in the space
\[
\fn_R^+\cap({}^w\fn_S^+):=\{x\in\fn_R^+:w^{-1}xw\in\fn_S^+\}
\]
is given by the Young diagram $\shape(\RS_1(w))=\shape(\RS_2(w))$. In other words, for all $k\geq 1$, $\dim \ker x^k$ is the number of boxes in the first $k$ columns of $\RS_1(w)$.\end{proposition}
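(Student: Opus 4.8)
The plan is to reduce Proposition \ref{P4.6} to the classical statement of Theorem \ref{T-Steinberg} by relabeling the index sets $R$ and $S$ to $\{1,\ldots,k\}$ through their order-preserving bijections, an operation which affects none of the relevant invariants.

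First I would note that, $w$ being a bijection, we have $\card{R}=\card{S}=:k$, and I would introduce the unique order-preserving bijections $\phi:R\to\{1,\ldots,k\}$ and $\psi:S\to\{1,\ldots,k\}$. These induce linear isomorphisms $\C^R\stackrel{\sim}{\to}\C^k$ and $\C^S\stackrel{\sim}{\to}\C^k$ and hence, by conjugation, an isomorphism $\Mat_{R,R}(\C)\stackrel{\sim}{\to}\Mat_k(\C)$, $x\mapsto\tilde{x}$. Since $\phi$ preserves the order, this isomorphism carries $\fn_R^+$ onto $\fn_k^+$. Setting $\bar{w}:=\phi\circ w\circ\psi^{-1}\in\permutationsof{k}$, a direct check (using $\bar{w}^{-1}\tilde{x}\bar{w}=\psi(w^{-1}xw)\psi^{-1}$) shows that $x\mapsto\tilde{x}$ restricts to a linear isomorphism
\[
\fn_R^+\cap({}^w\fn_S^+)\stackrel{\sim}{\longrightarrow}\fn_k^+\cap({}^{\bar{w}}\fn_k^+),
\]
because the condition $w^{-1}xw\in\fn_S^+$ becomes exactly $\bar{w}^{-1}\tilde{x}\bar{w}\in\fn_k^+$ under these identifications.

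Next I would observe that $x\mapsto\tilde{x}$ is conjugation by a fixed isomorphism $\C^R\to\C^k$, so $x$ and $\tilde{x}$ have the same Jordan type. Applying Theorem \ref{T-Steinberg} to $\bar{w}$, the general element of $\fn_k^+\cap({}^{\bar{w}}\fn_k^+)$ has Jordan type $\shape(\RS_1(\bar{w}))$, whence so does the general element of $\fn_R^+\cap({}^w\fn_S^+)$. It then remains to identify $\shape(\RS_1(\bar{w}))$ with $\shape(\RS_1(w))$: this holds because Robinson--Schensted insertion compares entries only through their relative order, so relabeling $S$ and $R$ by the order-preserving maps $\psi$ and $\phi$ replaces $w$ by $\bar{w}$ without changing the shape of the insertion tableau (indeed this is precisely how $\RS_1(w)$ is defined for a bijection between arbitrary ordered sets). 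Finally, the ``in other words'' reformulation is the standard dictionary for a nilpotent endomorphism: if the Jordan type is the partition $\lambda$, then $\dim\ker x^k-\dim\ker x^{k-1}$ is the number of Jordan blocks of size $\geq k$, i.e. the length of the $k$-th column of $\lambda$, and summing over columns gives that $\dim\ker x^k$ equals the number of boxes in the first $k$ columns. The only point requiring care is the compatibility of the RS correspondence with order-preserving relabeling, but this is immediate from the insertion algorithm; everything else is routine bookkeeping.
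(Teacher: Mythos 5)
Your proposal is correct and matches the paper's intent exactly: the paper offers no separate argument for Proposition \ref{P4.6}, stating only that it is ``a reformulation of Theorem \ref{T-Steinberg},'' and your order-preserving relabeling via $\phi$ and $\psi$, together with the conjugation-invariance of Jordan type and the order-invariance of Robinson--Schensted shape, is precisely the routine verification that this reformulation presupposes. The only detail worth making explicit is that your linear isomorphism $x\mapsto\tilde{x}$ carries dense open subsets to dense open subsets, so ``general element'' transfers correctly, but this is immediate.
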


Take an element $\omega\in\parameters$ with corresponding data $(I,J,L,L',M,M',\sigma)$. As in Section \ref{section-2.3}, we write $J=\{j_1<\ldots<j_k\}$, $L'=\{\ell'_1<\ldots<\ell'_{s'}\}$, $M=\{m_1<\ldots<m_t\}$.
Moreover, we denote
\[
S=J\cup M\cup \{q+1,\ldots,q+s'\}\quad\mbox{and}\quad
 R=\{-t,\ldots,-1\}\cup I\cup L'.
\]
We will consider the bijection $w:=w_{\fs,+}:S\to R$ defined in (\ref{ws+}).

We denote $\varsigma=\eta_{I,J}^{I\cup L',J\cup M}(\sigma)$ and $\tau=\eta_{I,J}^{\intp,\intq}(\sigma)=\eta_{I\cup L',J\cup M}^{\intp,\intq}(\varsigma)$ (see (\ref{eta})), that is,
\begin{equation}
\label{zeta-tau}
\varsigma=\begin{blockarray}{ccc}
{} & {\ J\ } & {\ M\ } \\
\begin{block}{c(c|c)}
I & \sigma & 0 \\
\cline{2-3}
L' & 0 & 0  \\
\end{block}
\end{blockarray}\,,\qquad \tau=\begin{blockarray}{cccc}
{} & {\ J\ } & {\ M\ } & {\ M'\ } \\
\begin{block}{c(c|c|c)}
I & \sigma & 0 & 0 \\
\cline{2-4}
L & 0 & 0 & 0 \\
\cline{2-4}
L' & 0 & 0 & 0 \\
\end{block}
\end{blockarray}
\end{equation}
(after changing the order of rows and columns).

\begin{lemma}
\label{L4.8}
Let $x=\begin{pmatrix} a & b \\ c & d \end{pmatrix}\in\condir_\omega$, so that $x_\fk=\begin{pmatrix} a & 0 \\ 0 & d\end{pmatrix}$ and $x_\fs=\begin{pmatrix} 0 & b \\ c & 0\end{pmatrix}$. Then, for every $m\geq 0$, we have
\[
(x_\fs)^{2m}=(-1)^m(x_\fk)^{2m}=(-1)^m\begin{pmatrix} a^{2m} & 0 \\ 0 & d^{2m} \end{pmatrix}
\quad\mbox{and}\quad
(x_\fs)^{2m+1}=(-1)^m\begin{pmatrix} 0 & * \\ (c\tau)^{2m}c & 0 \end{pmatrix}
\]
(where the symbol $*$ stands for some matrix in $\Mat_{p,q}(\C)$, whose precise description is not needed).
\end{lemma}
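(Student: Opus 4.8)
The plan is to reduce the entire statement to the single quadratic identity $x_\fs^2 = -x_\fk^2$. Squaring the off-diagonal matrix gives $x_\fs^2 = \begin{pmatrix} bc & 0 \\ 0 & cb \end{pmatrix}$ while $x_\fk^2 = \begin{pmatrix} a^2 & 0 \\ 0 & d^2 \end{pmatrix}$, so it suffices to prove the two block identities $bc = -a^2$ and $cb = -d^2$. Once this is established, the even case follows at once by raising to the $m$-th power: $(x_\fs)^{2m} = (x_\fs^2)^m = (-1)^m (x_\fk^2)^m = (-1)^m (x_\fk)^{2m}$, and $(x_\fk)^{2m}$ is block-diagonal with blocks $a^{2m}, d^{2m}$ because $x_\fk$ is.

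First I would prove $bc = -a^2$ by a direct computation from the explicit block form of $\condir_\omega$ recorded in Figure \ref{figure4} (equivalently from Lemma \ref{L4.2}). In the block decomposition indexed by $I, L, L', J, M, M'$, the matrix $c$ has vanishing columns along $L$ and vanishing rows along $M'$, while $b$ has vanishing columns along $M$ and vanishing rows along $L'$; hence in both products $bc$ and $a^2$ only the blocks attached to $I$ survive, and a one-line multiplication using $\subm{b}{\intp,J} = -\subm{a}{\intp,I}\sigma$ and $\subm{a}{I,\intp} = \sigma\subm{c}{J,\intp}$ yields $bc = -a^2$ block by block. The companion identity $cb = -d^2$ comes for free from the involution $\omega \mapsto \omega^*$ of (\ref{4.8}), under which $(a,b,c,d)$ is replaced by $(d,c,b,a)$.

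For the odd case, I would peel off one factor and write $(x_\fs)^{2m+1} = x_\fs (x_\fs)^{2m} = (-1)^m\, x_\fs (x_\fk)^{2m} = (-1)^m \begin{pmatrix} 0 & bd^{2m} \\ ca^{2m} & 0 \end{pmatrix}$, so only the lower-left block $(-1)^m ca^{2m}$ must be identified with $(-1)^m (c\tau)^{2m} c$. The crucial auxiliary identity is $ca = (c\tau)c$, which again I would verify directly from Figure \ref{figure4}: with $\tau$ as in (\ref{zeta-tau}) both $ca$ and $(c\tau)c$ reduce to the matrix whose surviving blocks are $c_1\sigma c_1, c_1\sigma c_2, c_3\sigma c_1, c_3\sigma c_2$. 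An immediate induction then propagates this to $ca^k = (c\tau)^k c$ for all $k$, since $ca^{k+1} = (ca^k)a = (c\tau)^k(ca) = (c\tau)^{k+1}c$; taking $k = 2m$ finishes the proof.

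The only genuine work, and the step most prone to sign and indexing errors, is the block bookkeeping for $bc = -a^2$ and $ca = (c\tau)c$; everything else is formal. I expect the $\sigma$-conjugations appearing in the $I$-rows and $J$-columns (the factors $\sigma c_1\sigma$, etc. in Figure \ref{figure4}) to be where care is needed, but the systematic vanishing of the $L, L', M, M'$ cross-terms collapses each individual block product to a single term, which keeps the computation short.
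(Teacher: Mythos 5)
Your proposal is correct, and it follows the same two-step skeleton as the paper (first $x_\fs^2=-x_\fk^2$, then odd powers via one extra factor and an identity involving $\tau$), but both steps are verified by a genuinely different route. For the even case, the paper avoids your block computation of $bc=-a^2$ and $cb=-d^2$ entirely: since $\mathrm{Im}\,x\subset[\omega]\subset\ker x$ forces $x^2=0$, and the $\fk$-component of $x^2=(x_\fk+x_\fs)^2$ is exactly $x_\fk^2+x_\fs^2$, the identity $x_\fs^2=-x_\fk^2$ is a one-line consequence of the definition of $\condir_\omega$ — a shortcut you missed, though your direct verification from Lemma \ref{L4.2} (the $M$- and $M'$-contributions to $bc$ dying because $\subm{b}{\intp,M}=0$ and $\subm{c}{M',\intp}=0$, leaving $bc=-\subm{a}{\intp,I}\sigma\subm{c}{J,\intp}=-\subm{a}{\intp,I}\subm{a}{I,\intp}=-a^2$) is sound, as is your use of the involution (\ref{4.8}) for $cb=-d^2$. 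For the odd case, you peel the factor $x_\fs$ off on the left, getting lower-left block $ca^{2m}$, and reduce to the single base identity $ca=(c\tau)c$ (both sides equal $\subm{c}{\intq,I}\sigma\subm{c}{J,\intp}$) followed by the formal induction $ca^{k+1}=(ca^k)a=(c\tau)^k(ca)=(c\tau)^{k+1}c$; the paper instead peels on the right, getting $d^{2m}c$, and writes out the full blockwise expression for $(-1)^\ell d^\ell c$ for general $\ell$ (its equation (\ref{powers1})), checking it coincides with $(c\tau)^\ell c$. Your induction is tidier for the purposes of this lemma alone; the paper's explicit display of (\ref{powers1}) pays off later, as it is reused verbatim in the proof of Lemma \ref{L4.9} to match against $(\gamma\varsigma)^\ell\gamma$, so if you adopt your streamlined version you would need an equivalent computation there.
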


\begin{proof}
Every element $x\in\condir_\omega$ is such that $x^2=0$. On the other hand, we can see that $(x^2)_\fk=(x_\fk)^2+(x_\fs)^2$. The formula for $(x_\fs)^{2m}$ ensues.

It readily follows that
\[(x_\fs)^{2m+1}=(-1)^m\begin{pmatrix} 0 & a^{2m}b \\ d^{2m}c & 0 \end{pmatrix}.\]
For every $\ell\geq 1$, using the notation of Figure \ref{figure4}, we can see that $(-1)^\ell d^\ell c$ is the matrix (written blockwise)
\begin{equation}
\label{powers1}
\begin{blockarray}{cccc}
{} & {\quad I\quad} & {\quad L\quad} & {\quad L'\quad} \\
\begin{block}{c(c|c|c)}
J & (c_1\sigma)^\ell c_1 & 0 & (c_1\sigma)^\ell c_2 \\
\cline{2-4}
M & c_3\sigma(c_1\sigma)^{\ell-1}c_1 & 0 & c_3\sigma(c_1\sigma)^{\ell-1}c_2 \\
\cline{2-4}
M' & 0 & 0 & 0 \\
\end{block}
\end{blockarray}
\,,
\end{equation}
and this coincides with $(c\tau)^\ell c$.
Letting $\ell=2m$, this yields the formula claimed for $(x_\fs)^{2m+1}$ in the case of $m\geq 1$. The claimed formula is immediate if $m=0$.
\end{proof}

We consider the following extension-by-zero mappings:
\[
\xymatrix{
& \ar_{\eta_1:=\eta_{J\cup M,I\cup L'}^{\intq,\intp}\quad}[ld] \Mat_{J\cup M,I\cup L'}(\C) \ar^{\quad\eta_2:=\eta_{J\cup M,I\cup L'\\ }^{S,R}}[rd] & \\
\Mat_{\intq,\intp}(\C)=\Mat_{q,p}(\C) &  & \Mat_{S,R}(\C).}
\]
In addition, we consider the subspace $\Gamma:=\{\gamma\in \Mat_{J\cup M,I\cup L'}(\C):\varsigma \gamma\in\fn_{I\cup L'}^+,\ \gamma\varsigma\in\fn_{J\cup M}^+\}$ (where $\varsigma$ is as in (\ref{zeta-tau})).

\begin{lemma}
\label{L4.9}
The maps $\eta_1$ and $\eta_2$ restrict to linear isomorphisms
\[(\eta_1)|_{\Gamma}:\Gamma\stackrel{\sim}{\longrightarrow} \left\{ c\in \Mat_{q,p}(\C):\exists a,b,d\ \mbox{such that}\ \begin{pmatrix} a & b \\ c & d \end{pmatrix}\in\condir_\omega \right\}\]
and\[(\eta_2)|_\Gamma:\Gamma\stackrel{\sim}{\longrightarrow} \{z\in\Mat_{S,R}(\C):wz\in\fn_R^+,\ zw\in\fn_S^+\}=\{z\in\Mat_{S,R}(\C):wz\in\fn_R^+\cap({}^w\fn_S^+)\},\]
where $w=w_{\fs,+}$ is the bijection defined in (\ref{ws+}).
Moreover, for every $\gamma\in\Gamma$ such that $c=\eta_1(\gamma)$ and $z=\eta_2(\gamma)$, we have
\[\eta_1((\gamma\varsigma)^{\ell}\gamma)=(c\tau)^\ell c\quad\mbox{and}\quad \eta_2((\gamma\varsigma)^\ell\gamma)=(zw)^\ell z=w^{-1}(wz)^{\ell+1}\quad\mbox{for all $\ell\geq 0$},\]
where $\varsigma$ and $\tau$ are given by (\ref{zeta-tau}).
\end{lemma}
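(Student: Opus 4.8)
The plan is to read off the three assertions from the explicit shape of $\condir_\omega$ recorded in Figure \ref{figure4}, using that $\eta_1,\eta_2$ are extensions by zero and hence automatically injective; the real content is to identify their images and to match the two triangularity conditions defining $\Gamma$ with the conditions on either side.

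First I would treat $\eta_1$. By Lemma \ref{L4.2} (equivalently Figure \ref{figure4}) the block $c$ of any $x=\left({}^{a \ b}_{c \ d}\right)\in\condir_\omega$ vanishes on the rows $M'$ and the columns $L$, so $c=\eta_1(\gamma)$ with $\gamma=\subm{c}{J\cup M,\,I\cup L'}$; the same figure shows $\subm{a}{I\cup L',\,I\cup L'}=\varsigma\gamma$ and $\subm{d}{J\cup M,\,J\cup M}=-\gamma\varsigma$. The remaining entries of $a,b,d$ (the blocks $a_3,a_4,b_4,d_2,d_4$) are free, while the rows $L'$ of $a$ and $M'$ of $d$, and the columns $L$ of $a$ and $M$ of $d$, are forced to be zero. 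Hence a completion $x\in\condir_\omega$ with $a\in\fn_p^+$ and $d\in\fn_q^+$ exists if and only if the forced blocks $\varsigma\gamma=\subm{a}{I\cup L',I\cup L'}$ and $-\gamma\varsigma=\subm{d}{J\cup M,J\cup M}$ are strictly upper triangular, that is, if and only if $\gamma\in\Gamma$. This is exactly the first displayed isomorphism.

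For $\eta_2$ I would first turn the two conditions into entrywise statements: writing $z=\eta_2(\gamma)$, one has $wz\in\fn_R^+$ if and only if $z_{s,r}=0$ whenever $w(s)\geq r$, and $zw\in\fn_S^+$ if and only if $z_{s,r}=0$ whenever $s\geq w^{-1}(r)$ (substitute $r'=w(s)$, resp. $s=w^{-1}(r)$, into the definition of $\fn^+$). The crucial point is that $w=w_{\fs,+}$ sends $M$ to the negative labels $\{-t,\dots,-1\}$ and $\{q+1,\dots,q+s'\}$ to $L'$ in an order-reversing way; a short check then shows these two conditions are incompatible for the extra rows $\{q+1,\dots,q+s'\}$ and the extra columns $\{-t,\dots,-1\}$, forcing $z$ to be supported on rows $J\cup M$ and columns $I\cup L'$, so $z=\eta_2(\gamma)$. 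On this support the residual conditions reduce, via $w|_J=\sigma$, to $\varsigma\gamma\in\fn_{I\cup L'}^+$ and $\gamma\varsigma\in\fn_{J\cup M}^+$, i.e.\ to $\gamma\in\Gamma$, giving the second isomorphism; the equality of the two descriptions of the target is the tautology $w^{-1}(wz)w=zw$.

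Finally, for the power formulas I would establish the two ``multiplicativity'' identities
\[
\eta_1(\gamma_1\varsigma\gamma_2)=\eta_1(\gamma_1)\,\tau\,\eta_1(\gamma_2)
\qquad\text{and}\qquad
\eta_2(\gamma_1\varsigma\gamma_2)=\eta_2(\gamma_1)\,w\,\eta_2(\gamma_2),
\]
valid for all $\gamma_1,\gamma_2\in\Mat_{J\cup M,I\cup L'}(\C)$. Both hold because $\tau$ and (the matrix of) $w$ agree with $\sigma$ on the block $I\times J$ and vanish off it, so each product only sees the $I$-columns of the left factor and the $J$-rows of the right factor — exactly as $\varsigma$ does inside $\gamma_1\varsigma\gamma_2$ — and both sides are then the extension by zero of the same block product. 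Feeding $\gamma_1=\gamma$, $\gamma_2=(\gamma\varsigma)^{\ell-1}\gamma$ and inducting on $\ell$ yields $\eta_1((\gamma\varsigma)^\ell\gamma)=(c\tau)^\ell c$ and $\eta_2((\gamma\varsigma)^\ell\gamma)=(zw)^\ell z$, while $w^{-1}(wz)^{\ell+1}=z(wz)^\ell=(zw)^\ell z$ gives the last equality (consistent with the explicit computation (\ref{powers1}) in the proof of Lemma \ref{L4.8}). The main obstacle throughout is purely organizational: tracking the interspersed index sets $I,L,L',J,M,M'$ and their induced orders, and in particular verifying the vanishing on the extra rows and columns, which is precisely where the order-reversals built into $w_{\fs,+}$ are used.
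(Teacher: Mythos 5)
Your proposal is correct and follows essentially the same route as the paper: you identify the image of $\eta_1$ from the block form of $\condir_\omega$ (Lemma \ref{L4.2}/Figure \ref{figure4}), and the image of $\eta_2$ by unwinding the conditions $wz\in\fn_R^+$, $zw\in\fn_S^+$ against the explicit form of $w_{\fs,+}$, with the vanishing of the extra rows $S'$ and columns $R'$ forced by the order-reversing parts of $w$ --- the paper runs this computation blockwise (via the antidiagonal blocks $\sigma_0,\sigma_0'$) where you run it entrywise, and it verifies the power formulas by directly comparing the explicit expressions (\ref{powers1})--(\ref{powers2}) with $(zw)^\ell z$, where you package the same computation as a multiplicativity identity plus induction, a mild (and arguably cleaner) reorganization rather than a different method. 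One local slip: the matrix of $w$ does \emph{not} vanish off the block $I\times J$ --- it also carries $\sigma_0$ on $R'\times M$ and $\sigma_0'$ on $L'\times S'$ --- but your identity $\eta_2(\gamma_1\varsigma\gamma_2)=\eta_2(\gamma_1)\,w\,\eta_2(\gamma_2)$ is still valid for all $\gamma_1,\gamma_2$, since $\eta_2(\gamma_1)$ vanishes on the columns $R'$ and $\eta_2(\gamma_2)$ on the rows $S'$, which annihilates exactly the $\sigma_0$- and $\sigma_0'$-contributions; with that justification repaired, your induction goes through verbatim.
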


\begin{proof}
The space $\Gamma$ consists of the matrices
\begin{equation}
\label{gamma}
\gamma=\begin{blockarray}{ccc}
{} & {\quad I\quad} & {\quad L'\quad} \\
\begin{block}{c(c|c)}
J & c_1 & c_2 \\
\cline{2-3}
M & c_3 & c_4 \\ \end{block}
\end{blockarray}
\,\in \Mat_{J\cup M,I\cup L'}(\C)
\end{equation}
(written blockwise) such that
\begin{equation}
\label{zeta-gamma}
\varsigma \gamma=\begin{blockarray}{ccc}
{} & {\quad I\quad} & {\quad L'\quad} \\
\begin{block}{c(c|c)}
I & \sigma c_1 & \sigma c_2 \\
\cline{2-3}
L' & 0 & 0 \\ \end{block}
\end{blockarray}
\qquad\mbox{and}\qquad \gamma\varsigma=\begin{blockarray}{ccc}
{} & {\quad J\quad} & {\quad M\quad} \\
\begin{block}{c(c|c)}
J & c_1\sigma & 0 \\
\cline{2-3}
M & c_3\sigma & 0 \\ \end{block}
\end{blockarray}\end{equation}
are strictly upper triangular (before rearranging the rows and the columns).
Moreover, for every $\ell\geq 1$, we have
\begin{equation}
\label{powers2}
(\gamma\varsigma)^\ell\gamma=\begin{blockarray}{ccc}
{} & {\quad I\quad} & {\quad L'\quad} \\
\begin{block}{c(c|c)}
J & (c_1\sigma)^\ell c_1 & (c_1\sigma)^\ell c_2 \\
\cline{2-3}
M & c_3\sigma(c_1\sigma)^{\ell-1}c_1 & c_3\sigma(c_1\sigma)^{\ell-1}c_2 \\ \end{block}
\end{blockarray}\,.
\end{equation}

One can see from Figure \ref{figure4} that the space
\[
\mathcal{C}:=\left\{ c\in\Mat_{q,p}(\C):\exists a,b,d\ \mbox{such that}\ \begin{pmatrix} a & b \\ c & d \end{pmatrix}\in\condir_\omega\right\}
\]
consists of the matrices of the form
\[
c=\begin{blockarray}{cccc}
{} & {\quad I\quad} & {\quad L\quad} & {\quad L'\quad} \\
\begin{block}{c(c|c|c)}
J & c_1 & 0 & c_2 \\
\cline{2-4}
M & c_3 & 0 & c_4 \\
\cline{2-4}
M' & 0 & 0 & 0 \\
\end{block}
\end{blockarray}
\]
such that the matrices
\[
\begin{blockarray}{cccc}
{} & {\quad I\quad} & {\quad L\quad} & {\quad L'\quad} \\
\begin{block}{c(c|c|c)}
I & \sigma c_1 & 0 & \sigma c_2 \\
\cline{2-4}
L & 0 & 0 & 0 \\
\cline{2-4}
L' & 0 & 0 & 0 \\
\end{block}
\end{blockarray}\in\Mat_p(\C)
\quad\mbox{and}\quad
\begin{blockarray}{cccc}
{} & {\quad J\quad} & {\quad M\quad} & {\quad M'\quad} \\
\begin{block}{c(c|c|c)}
J & -c_1\sigma & 0 & 0 \\
\cline{2-4}
M & -c_3\sigma & 0 & 0 \\
\cline{2-4}
M' & 0 & 0 & 0 \\
\end{block}
\end{blockarray}\in\Mat_q(\C)
\]
are strictly upper triangular (before rearranging the rows and the columns). From the above description of $\Gamma$, these conditions are equivalent to having that $c=\eta_1(\gamma)$ for some $\gamma\in\Gamma$. Hence $\mathcal{C}=\eta_1(\Gamma)$, and this implies that $\eta_1$ restricts to a linear isomorphism $(\eta_1)|_\Gamma:\Gamma\to \mathcal{C}$.
Moreover, by comparing the expressions of $(c\tau)^\ell c$ and $(\gamma\varsigma)^\ell\gamma$ given in (\ref{powers1}) and (\ref{powers2}), respectively, one can see that the equality $\eta_1((\gamma\varsigma)^\ell\gamma)=(c\tau)^\ell c$ holds for all $\ell\geq 0$ whenever $\eta_1(\gamma)=c$.

We have $R=R'\cup I\cup L'$ and $S=J\cup M\cup S'$ where $R':=\{-i\}_{i=1}^t$
and $S':=\{q+j\}_{j=1}^{s'}$.
Let $z$ be an element in $\Mat_{S,R}(\C)$, and let us write it (blockwise) as
\begin{equation}
\label{z}
z=\begin{blockarray}{cccc}
{} & {\quad R'\quad} & {\quad I\quad} & {\quad L'\quad} \\
\begin{block}{c(c|c|c)}
J & z_1 & z_2 & z_3 \\
\cline{2-4}
M & z_4 & z_5 & z_6 \\
\cline{2-4}
S' & z_7 & z_8 & z_9 \\
\end{block}
\end{blockarray}\,.\end{equation}
The bijection $w=w_{\fs,+}:S\to R$ of (\ref{ws+}) can be written in the following matrix form
\[
w=\begin{blockarray}{cccc}
{} & {\quad J\quad} & {\quad M\quad} & {\quad S'\quad} \\
\begin{block}{c(c|c|c)}
R' & 0 & \sigma_0 & 0 \\
\cline{2-4}
I & \sigma & 0 & 0 \\
\cline{2-4}
L' & 0 & 0 & \sigma_0' \\
\end{block}
\end{blockarray}
\]
where the block $\sigma_0\in\Mat_{R',M}(\C)$ is yielded by the unique decreasing bijection $M\to R'$; this corresponds to a block with $1$'s on the antidiagonal and $0$'s elsewhere. The block $\sigma_0'\in\Mat_{L',S'}(\C)$ is defined in the same way.
We get
\[
wz=\begin{blockarray}{cccc}
{} & {\quad R'\quad} & {\quad I\quad} & {\quad L'\quad} \\
\begin{block}{c(c|c|c)}
R' & \sigma_0 z_4 & \sigma_0 z_5 & \sigma_0 z_6 \\
\cline{2-4}
I & \sigma z_1 & \sigma z_2 & \sigma z_3 \\
\cline{2-4}
L' & \sigma_0' z_7 & \sigma_0' z_8 & \sigma_0' z_9 \\
\end{block}
\end{blockarray}
\quad\mbox{and}\quad zw=\begin{blockarray}{cccc}
{} & {\quad J\quad} & {\quad M\quad} & {\quad S'\quad} \\
\begin{block}{c(c|c|c)}
J & z_2\sigma & z_1\sigma_0 & z_3\sigma_0' \\
\cline{2-4}
M & z_5\sigma & z_4\sigma_0 & z_6\sigma_0' \\
\cline{2-4}
S' & z_8\sigma & z_7\sigma_0 & z_9\sigma_0' \\
\end{block}
\end{blockarray}\,.
\]
Note that we have $i<j$ whenever $(i,j)\in R'\times(I\cup L')$. We have also $i<j$ whenever $(i,j)\in(J\cup M)\times S'$. We obtain that $(wz,zw)\in\fn_R^+\times \fn_S^+$ if and only if the following conditions are satisfied:
\[
\left\{\begin{array}{ll}
\mbox{$\bullet$ the blocks $z_1,z_4,z_7,z_8,z_9$ are zero}, \\
\parbox{14cm}{$\bullet$ the submatrices $\begin{blockarray}{ccc}
{} & {\quad I\quad} & {\quad L'\quad} \\
\begin{block}{c(c|c)}
I & \sigma z_2 & \sigma z_3 \\
\cline{2-3}
L' & 0 & 0 \\
\end{block}
\end{blockarray}$
\ and $\begin{blockarray}{ccc}
{} & {\quad J\quad} & {\quad M\quad} \\
\begin{block}{c(c|c)}
J & z_2\sigma & 0 \\
\cline{2-3}
M & z_5\sigma & 0 \\
\end{block}
\end{blockarray}$
\ are strictly upper triangular.}
\end{array}\right.
\]
Combining these observations with the description of the space $\Gamma$ given above, we conclude that the elements in the space $\mathcal{Z}:=\{z\in\Mat_{S,R}(\C):wz\in\fn_R^+,\ zw\in\fn_S^+\}$ are exactly of the form $z=\eta_2(\gamma)$ for $\gamma\in\Gamma$ (see (\ref{gamma})--(\ref{zeta-gamma})) and, therefore, $\eta_2$ restricts to an isomorphism $(\eta_2)|_\Gamma:\Gamma\to\mathcal{Z}$ as asserted.

Finally, let $z=\eta_2(\gamma)$ for $\gamma\in\Gamma$ written as in (\ref{gamma}).
In the notation of (\ref{z}), this means that $z_2=c_1$, $z_3=c_2$, $z_5=c_3$, $z_6=c_4$, and the other blocks of $z$ are zero. In view of the expression for $zw$ and $(\gamma\varsigma)^\ell\gamma$ given in (\ref{powers2}), this yields
\[(zw)^\ell z=\begin{blockarray}{cccc}
{} & {\quad R'\quad} & {\quad I\quad} & {\quad L'\quad} \\
\begin{block}{c(c|c|c)}
J & 0 & (c_1\sigma)^\ell c_1 & (c_1\sigma)^{\ell}c_2 \\
\cline{2-4}
M & 0 & c_3\sigma(c_1\sigma)^{\ell-1}c_1 & c_3\sigma(c_1\sigma)^{\ell-1}c_2 \\
\cline{2-4}
S' & 0 & 0 & 0 \\
\end{block}
\end{blockarray}\,=\eta_2((\gamma\varsigma)^\ell\gamma)\]
for all $\ell\geq 1$. The lemma is proved.
\end{proof}

\begin{proof}[Proof of Theorem \ref{T2}\,(2)]
Let $\Phi_\fk(\Xorbit_\omega)=\Nkorbit_{\lambda,\mu}$ and $\Phi_\fs(\Xorbit_\omega)=\Nsorbit_\Lambda$.
Let
\[x=\begin{pmatrix}a & b \\ c & d\end{pmatrix}\]
be a general element of $\mathcal{D}_\omega$, so that $x_\fk\in\Nkorbit_{\lambda,\mu}$ and $x_\fs\in\Nsorbit_\Lambda$
(see Lemma \ref{L4.1}).
It follows from Lemma \ref{L4.8} that, for every $\ell\geq 1$,
the number $\nrplus{\Lambda}{\ell}$ of $+$'s in the first $\ell$ columns of the signed Young diagram $\Lambda$ is equal to
\[\dim\ker a^{2m}\mbox{ if $\ell=2m$ is even},\quad\mbox{respectively}\quad \dim \ker(c\tau)^{2m} c\mbox{ if $\ell=2m+1$ is odd}\]
(with $\tau$ from (\ref{zeta-tau})).
If $\ell=2m$ is even, Theorem \ref{T2}\,(1) shows that this number is equal to the number $\nrboxes{\lambda}{2m}$ of boxes in the first $2m$ columns of $\lambda$.
This confirms the first assertion made in Theorem \ref{T2}\,(2)\,(a).

Now assume that $\ell=2m+1$ is odd. By Lemma \ref{L4.9}, we have $c=\eta_1(\gamma)$ with $\gamma\in\Gamma$. Moreover, we can assume that $z:=\eta_2(\gamma)$ is general in $\{z\in\Mat_{S,R}(\C):wz\in\fn_R^+\cap({}^w\fn_S^+)\}$,
with $w=w_{\fs,+}$ as in (\ref{ws+}). Hence, by Proposition \ref{P4.6}, this element $z$ is such that
\[
\dim \ker (wz)^{2m+1}=\nrboxes{\lambda'}{2m+1}
\]
with $\lambda'=\shape(\RS_1(w))$.
Note also that $\dim\ker(wz)^{2m+1}=\dim\ker(zw)^{2m}z$. In view of the previous observations, and by Lemma \ref{L4.9}, we get
\begin{eqnarray*}
\nrplus{\Lambda}{2m+1} & = & \dim \ker(c\tau)^{2m} c \\ & = & \card{L}+\dim \ker (\gamma\varsigma)^{2m}\gamma \\
 & = & \card{L}+\dim \ker(zw)^{2m}z-\card{R'} \\
 & = & s-t+\nrboxes{\lambda'}{2m+1}.
\end{eqnarray*}
This establishes the claim in Theorem \ref{T2}\,(2)\,(b) regarding the number of $+$'s. The formulas regarding the number of $-$'s stated in Theorem \ref{T2}\,(2)\,(a)--(b) can now be deduced, by invoking Lemma \ref{L4.3} and taking (\ref{4.7}) into account. The proof of Theorem \ref{T2}\,(2) is complete.
\end{proof}

\end{document}